\documentclass[12pt,reqno]{amsart}
\usepackage[utf8]{inputenc}
\usepackage[T1]{fontenc}
\usepackage[usenames, dvipsnames]{color}
\usepackage{ulem}

\usepackage{dsfont, amsfonts, amsmath, amssymb,amscd, stmaryrd, latexsym, amsthm, dsfont}
\usepackage[frenchb,english]{babel}
\usepackage{enumerate}
\usepackage{longtable}
\usepackage{geometry}
\usepackage{float}
\usepackage{tikz}
\usetikzlibrary{shapes,arrows}
\geometry{margin=3.5cm,head=0.2cm,headsep=6pt,foot=0.5cm}

\usepackage{float}
\usepackage{tikz}
\usepackage{xypic}
\usetikzlibrary{shapes,arrows}

\usepackage{pifont}
\usepackage{float}
\usepackage{tikz}
\usepackage{xypic}
\usetikzlibrary{shapes,arrows}

	\definecolor{darkcerulean}{rgb}{0.03, 0.27, 0.49}
	\definecolor{firebrick}{rgb}{0.7, 0.13, 0.13}
		\definecolor{forestgreen(traditional)}{rgb}{0.0, 0.27, 0.13}
		\definecolor{hanpurple}{rgb}{0.32, 0.09, 0.98}
	\definecolor{forestgreen(web)}{rgb}{0.13, 0.55, 0.13}
\newtheorem{theorem}{Theorem}[section]
\newtheorem{lemma}[theorem]{Lemma}
\newtheorem{proposition}[theorem]{Proposition}
\newtheorem{corollary}[theorem]{Corollary}

\theoremstyle{remark}
\newtheorem{remark}[theorem]{\bf Remark}

\newtheorem{definition}[theorem]{\bf Definition}

\usepackage[pagebackref]{hyperref}
\renewcommand*{\backref}[1]{}\renewcommand*{\backrefalt}[4]{\ifcase #1 (\tt not cited)\or (\tt cited on page~#2)\else (\tt cited on pages~#2)\fi}

\usepackage{hyperref}
\hypersetup{
	colorlinks=true,
	urlcolor=blue,
	citecolor=blue}
\def\NN{\mathbb{N}}
\def\RR{\mathds{R}}
\def\HH{I\!\! H}
\def\QQ{\mathbb{Q}}
\def\CC{\mathds{C}}
\def\ZZ{\mathbb{Z}}
\def\DD{\mathds{D}}
\def\OO{\mathcal{O}}
\def\kk{\mathds{k}}
\def\KK{\mathbb{K}}
\def\ho{\mathcal{H}_0^{\frac{h(d)}{2}}}
\def\LL{\mathbb{L}}
\def\L{\mathds{k}_2^{(2)}}
\def\M{\mathds{k}_2^{(1)}}
\def\k{\mathds{k}^{(*)}}
\def\l{\mathds{L}}

\def\kk{\mathds{k}}
\begin{document}
	
	\def\NN{\mathbb{N}}
	\def\RR{\mathds{R}}
	\def\HH{I\!\! H}
	\def\QQ{\mathbb{Q}}
	\def\CC{\mathds{C}}
	
	\def\FF{\mathbb{F}}
	\def\KK{\mathbb{K}}
	
	\def\ZZ{\mathbb{Z}}
	\def\DD{\mathds{D}}
	\def\OO{\mathcal{O}}
	\def\kk{\mathds{k}}
	\def\KK{\mathbb{K}}
	\def\ho{\mathcal{H}_0^{\frac{h(d)}{2}}}
	\def\LL{\mathbb{L}}
	\def\L{\mathds{k}_2^{(2)}}
	\def\M{\mathds{k}_2^{(1)}}
	\def\k{\mathds{k}^{(*)}}
	\def\l{\mathds{L}}
	\def\2r{\mathrm{rank_2}}
		\def\rg{\mathrm{rank}}
	
	\def\C{\mathrm{C}}
	\def\Y{\mathbf{Y}}
	\def\V{\mathbf{V}}
	\def\vep{\varepsilon}

	\selectlanguage{english}

		\title[Greenberg's conjecture and    Iwasawa module  I]{Greenberg's conjecture and    Iwasawa module  of Real biquadratic fields I}

	\author[M. M. Chems-Eddin]{Mohamed Mahmoud Chems-Eddin}
	\address{Mohamed Mahmoud CHEMS-EDDIN: Department of Mathematics, Faculty of Sciences Dhar El Mahraz,
		Sidi Mohamed Ben Abdellah University, Fez,  Morocco}
	\email{2m.chemseddin@gmail.com}

	\subjclass[2012]{ 11R29, 11R23,   11R18, 11R20.}
	\keywords{Greenberg's conjecture, Cyclotomic  $\ZZ_2$-extension, biquadratic number fields, Iwasawa invariants}


\begin{abstract}
	The main aim of this paper is to investigate   Greenberg's conjecture for real biquadratic fields. More precisely, we propose the following problem: 
	
	\begin{center}
		What are real biquadratic number fields $k$ such that   $\rg(A(k_\infty))=\rg(A(k_1))$ ?
	\end{center} 
	where   $A(k_\infty)$  is     the $2$-Iwasawa module of $k$ and $A(k_1)$ is the $2$-class group of $k_1$ the first layer of the cyclotomic $\ZZ_2$-extension of $k$. Moreover, we give several families of real biquadratic fields  $k$ such that $A(k_\infty)$ is trivial or isomorphic to $\ZZ/2^{n} \ZZ$ or   $\ZZ/2\ZZ \times\ZZ/2^n \ZZ$, where $n$  
	 is a   given positive integer.  The reader can also find   some results concerning the $2$-rank of the class group of certain real  triquadratic fields.

\end{abstract}
	
	\selectlanguage{english}
	
	\maketitle

	\section{Introduction}

 Let  $k$ be a   number field and $\ell$ a prime number. Let $A_\ell(k)$  or simply $A(k)$ when $\ell=2$  (resp.   $E_k$) denote the $\ell$-class group (resp.   the unit group) of $k$.
A   $\ZZ_\ell$-extension of $k$ is an infinite extension of $k$ denoted by  $k_\infty$ such that $\mathrm{Gal}(k_\infty/k)\simeq \ZZ_\ell$, where $\ZZ_\ell$ is the ring of $\ell$-adic numbers. For each $n\geq 1$, the extension $k_\infty/k$ contains a unique field denoted by $k_n$ and called the $n$th layer of the cyclotomic $\ZZ_\ell$-extension of $k$   of degree $\ell^n$. Furthermore, we have:
$$k=k_0\subset k_1 \subset k_2 \subset\cdots \subset k_n \subset \cdots  \subset k_\infty=\bigcup_{n\geq 0} k_n.$$
 In particular, for an odd prime number $\ell$, let $ \QQ_{\ell, n}$   be the unique real subfield  of the cyclotomic field  $\QQ(\zeta_{\ell^{n+1}}) $ of degree $\ell^n$ over $\QQ$,  
 and for $\ell=2$,   let $  \QQ_{2, n}$ be the field $\QQ(2\cos( {2\pi}/{2^{n+2}}))$, for all $n\geq 1$.
 Then $k_\infty= \bigcup k_n$, where $k_n= k\QQ_{\ell, n}$,  is called the   cyclotomic $\ZZ_\ell$-extension of $k$.  
  The inverse limit $A(k_\infty):=\varprojlim A_\ell(k_n)$
with respect to the norm maps is called the Iwasawa module for $k_\infty/k$. A spectacular result due to Iwasawa, affirms that  there exist integers $\lambda_k$,  $\mu_k\geq 0$ and  $\nu_k$, all independent of $n$, and  an integer $n_0$ such that:
\begin{eqnarray}\label{iwasawa}h_\ell(k_n)=\ell^{\lambda_k n+\mu_k \ell^n+\nu_k},\end{eqnarray}
for all $n\geq n_0$.  Here $h_\ell(k)$ denote the $\ell$-class number of a number field $k$. The integers $\lambda_k$,  $\mu_k$ and  $\nu_k$ are called the Iwasawa invariants of $k_\infty/k$
 (cf. \cite{iwasawa59} and \cite{washington1997introduction} for more details).
 
  In 1976,
  Greenberg conjectured that the invariants $\mu$ and $\lambda$ must be equal to $0$ for cyclotomic $\mathbb Z_\ell$-extension of totally real number
  fields (cf. \cite{Greenberg}).
  
   It was further proved by Ferrero and Washington    (cf. \cite{FerreroWashington}) that the $\mu$-invariant always vanishes
for the cyclotomic $\mathbb Z_\ell$-extension when the number field is abelian over the field $\mathbb{Q}$ of rational numbers. In other words, this  conjecture 
is equivalent to $h_\ell(k_n)$ being uniformly bounded.
   Greenberg's conjecture is still open, with partial progress made by considering particular values of $\ell$ and specific families of number fields (especially the real quadratic fields), for example, we refer the reader to   \cite{7,chemskatharina,fukuda,1FukuKom,kumakawa2,8,PLorenzo,mizusawa2,mizusawa3,mizusawa4,mizusawa5,mouhib,mouhib-mova}.
  Moreover, very recently some authors have taken interest  in the investigation of Greenberg's conjecture  for some particular families of real biquadratic fields. In fact, 
\cite{chems24,Elmahi1,Elmahi2,Elmahi3}, the   authors Chems-Eddin, El Mahi and Ziane investigated this conjecture for  biquadratic fields of the form   $ \QQ(\sqrt{pq_1}, \sqrt{q_1q_2})$
where $p\equiv 5\pmod 8$ and $q_1\equiv q_2\equiv3\pmod 4$  are three prime numbers such that $\ \left( \frac{p}{q_1} \right)=\ \left( \frac{p}{q_2} \right)$.
 Furthermore, in   \cite{Laxmi}
Laxmi and      Saikia investigated this conjecture for biquadratic fields of the form   $ \QQ(\sqrt{p}, \sqrt{r})$ with $p \equiv 9 \mod{16}, \ r \equiv 3 \mod 4$ are two prime numbers such that $\ \left( \frac{p}{r} \right) = -1$, and $\left(\frac{2}{p}\right)_{4} = -1$. Consider the following definition.
	\begin{definition}
	A number field $k$  is said QO-field if it is a quadratic extension of certain number field $k'$ whose class number is odd. We shall call $k'$ a base field of the QO-field $k$ and that $k/k'$ is a QO-extension. An extension of  QO-fields is an extension of number fields $L/M$ such that $L$ and $M$ are QO-fields.	\end{definition}

In this paper, we  propose and investigate  the following problems.

	\begin{center}
		 {\bf  Problems: }
	\end{center}
Let $K$ be a real  biquadratic number field such that $K_1/K$ is a ramified  extension of QO-fields, where $K_1=K(\sqrt{2})$ is the first layer of the cyclotomic $\ZZ_2$-extension of $K$. Consider the following problems:

	 	 \begin{center}
	 	\noindent  {  Problem 1}: 	What are the real biquadratic fields $K$  such that  
	 	 	$$\rg(A(K_\infty))\leq 2   \text{ and   }    \rg( A(K_\infty))=\rg(A(K))?$$
	 	 \end{center}
 \begin{center}
 	\noindent  { Problem 2}: 	What is the structure of $A(K_\infty)$?$\qquad\qquad\qquad\quad$
 
 \end{center}
\bigskip
Here 	we provide the first part of the answer to these problems, and a second part is  prepared in a another paper. We note that the analogue of Problem 1  for real  quadratic fields have been partially investigated, without being directly formulated, in the some of the previously cited references, particularly in  \cite{7}.
 	 \bigskip
  	
		Let $K$ be a real biquadratic field. To answer the first problem   we will implement the following strategy: 
	First we eliminate all fields $K$  such that  $K$ and $K_1$ are not QO-fields or $K_1/K$ is unramified over $2$. In the remaining list $($see the forms $A) -F)$ below$)$ we shall determine 
	all fields $K$ such that $\rg(A(K))\leq 2$ and finally to get our result we  investigate the equality $\rg(A(K))= \rg(A(K_1))$.\\
	
	We have the following  lemma which is due to Conner and Hurrelbrink (cf. \cite[Corollary 18.4]{connor88}).

	\begin{lemma}[\cite{{connor88}}, Corollary 18.4]\label{realQuad} Let $F$ be a real quadratic field. The class number of $F$ is odd   if and only if it takes one of the following forms:
		\begin{enumerate}[$    1)$]
			\item $F=\QQ(\sqrt{2})$, $\QQ(\sqrt{p})$ where $p$  is a   prime number congruent to $1\pmod 4$. 
			\item $F=\QQ(\sqrt{q})$, $\QQ(\sqrt{2q})$ or $\QQ(\sqrt{q_1q_2})$  where  $q$, $q_1$, $q_2$ are prime  numbers congruent to $3\pmod 4$.
		\end{enumerate}
	\end{lemma}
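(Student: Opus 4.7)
The plan is to derive the classification from Gauss's genus theory together with a case analysis of the norm of the fundamental unit. Write $F=\QQ(\sqrt{d})$ with $d$ squarefree, let $D$ denote the discriminant of $F$, $t$ the number of distinct prime divisors of $D$, and $\epsilon$ the fundamental unit of $F$. The genus formula gives $\mathrm{rank}_2(\mathrm{Cl}^+(F)) = t-1$ for the narrow class group, while $|\mathrm{Cl}^+(F)|$ equals $|\mathrm{Cl}(F)|$ or $2|\mathrm{Cl}(F)|$ according as $\mathrm{N}(\epsilon)=-1$ or $+1$. This already forces $t\le 2$ whenever $h(F)$ is odd, so only the six families of $d$ listed in the lemma can possibly occur.

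For the $(\Leftarrow)$ direction I would handle those families in turn. If $d\in\{2,p\}$ with $p\equiv 1\pmod 4$, then $t=1$ and $\mathrm{Cl}^+(F)$ has odd order, so $h(F)$ is odd. If $d\in\{q,2q,q_1q_2\}$ with $q,q_1,q_2\equiv 3\pmod 4$, then $t=2$, and a Hilbert symbol computation shows that $-1$ is not a norm from $F$, so $\mathrm{N}(\epsilon)=+1$ and the $2$-part of $\mathrm{Cl}^+(F)$ is cyclic of some order $2^a$. It remains to prove $a=1$, which I intend to do via the R\'edei--Reichardt formula: the $4$-rank of $\mathrm{Cl}^+(F)$ equals $t-1-\mathrm{rank}_{\FF_2}(R)$, where $R$ is the R\'edei matrix of Kronecker symbols $\left(\frac{D_i}{D_j}\right)$ attached to the prime discriminants $D_i\mid D$. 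In each of the three sub-cases, quadratic reciprocity forces $R$ to have rank $1$, so the $4$-rank vanishes, $\mathrm{Cl}^+(F)_2\cong \ZZ/2\ZZ$, and $h(F)$ is odd.

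For the $(\Rightarrow)$ direction I would argue by contrapositive: any real quadratic $F$ outside the stated list has even class number. When $t\ge 3$, the kernel of the surjection $\mathrm{Cl}^+(F)\twoheadrightarrow \mathrm{Cl}(F)$ has order at most $2$, whence $\mathrm{rank}_2(\mathrm{Cl}(F))\ge t-2\ge 1$. The remaining $t=2$ configurations, for instance $d=pq$ or $d=2p$ with $p\equiv 1\pmod 4$, are eliminated either by the same R\'edei analysis (now producing a nontrivial $4$-rank) or by exhibiting an obstructing $2$-class directly from genus theory combined with the value of $\mathrm{N}(\epsilon)$.

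The main obstacle is the R\'edei $4$-rank computation in the $t=2$ cases: without it, genus theory alone leaves $\mathrm{Cl}^+(F)$ cyclic of unknown $2$-power order and does not settle the parity of $h(F)$. A closely related subtlety is the determination of $\mathrm{N}(\epsilon)\in\{\pm 1\}$ in each sub-case, since this changes $h(F)$ by a factor of $2$; it is handled by standard local norm residue arguments but must be carried out carefully sub-case by sub-case.
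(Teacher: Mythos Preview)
The paper does not give its own proof of this lemma; it simply quotes it as Corollary~18.4 of Conner--Hurrelbrink \cite{connor88}. Your outline via Gauss's genus theory for the $2$-rank of $\mathrm{Cl}^+(F)$, the relation $|\mathrm{Cl}^+(F)|\in\{h(F),2h(F)\}$ governed by $N(\epsilon)$, and the R\'edei--Reichardt $4$-rank formula is exactly the classical route and is in substance what Conner--Hurrelbrink do, so the approach is correct.

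One point of presentation to tighten: the sentence ``This already forces $t\le 2$\ldots so only the six families of $d$ listed in the lemma can possibly occur'' overstates what $t\le 2$ gives you. The constraint $t\le 2$ still allows $d=2p$ and $d=p_1p_2$ with $p,p_1,p_2\equiv 1\pmod 4$, which are not on the list; you correctly return to these in the $(\Rightarrow)$ paragraph, but they should not be declared excluded at that earlier stage. (Also, your example ``$d=pq$'' with $p\equiv 1$, $q\equiv 3\pmod 4$ has $D=4pq$ and hence $t=3$, so it is already handled by the $t\ge 3$ clause; the genuine leftover $t=2$ cases are $2p$ and $p_1p_2$.) For those leftovers the argument does require both ingredients you flag: when $\left(\frac{2}{p}\right)=1$ (resp.\ $\left(\frac{p_1}{p_2}\right)=1$) the R\'edei matrix has rank~$0$ and the $4$-rank of $\mathrm{Cl}^+$ is $1$, forcing $2\mid h(F)$ regardless of $N(\epsilon)$; when the symbol is $-1$ the R\'edei matrix has rank~$1$, the $4$-rank vanishes, and one must then invoke $N(\epsilon)=-1$ (which holds in these sub-cases) to conclude $2\mid h(F)$. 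With that split made explicit, your sketch is complete.
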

	
	 Furthermore, we have the following lemma that combines results by Conner and Hurrelbrink (cf. \cite[Corollaries 21.2, 21.4 and Proposition 21.5]{connor88}) and Ku{\v{c}}era 
	  (cf. \cite[Theorem 1]{kuvcera1995parity}).

	\begin{lemma}[\cite{{connor88}}, Corollaries 21.2, 21.4 and Proposition 21.5]\label{realBiquad} Let $K$ be a real biquadratic number field. The class number of $K$ is odd if and only if   it takes one of the following forms:
		\begin{enumerate}[$    1)$]
			\item $K=\QQ(\sqrt{2},\sqrt{q})$ or  $\QQ(\sqrt{q_1},\sqrt{q_2})$, 
			
			\item $K=\QQ(\sqrt{q},\sqrt{p})$ or $\QQ(\sqrt{2q},\sqrt{p})$ with $\left(\frac{p}{q}\right)=-1$ or $p\equiv5 \pmod 8$,
			\item	$K=\QQ(\sqrt{2},\sqrt{q_1q_2})$ with   $q_1\equiv3 \pmod 8$ or $q_2\equiv3 \pmod 8$, 
			\item $K=\QQ(\sqrt{p},\sqrt{q_1q_2})$  with $\left(\frac{q_1}{p}\right)=-1$ or $\left(\frac{q_2}{p}\right)=-1$,
			\item $K=\QQ(\sqrt{p_1},\sqrt{p_2})$ with $\left(\frac{p_1}{p_2}\right)=-1$ or $[\left(\frac{p_1}{p_2}\right)=1$ and $\left(\frac{p_1}{p_2}\right)_4\not=\left(\frac{p_2}{p_1}\right)_4]$.
		\end{enumerate}
		Here, $p$, $p_1$, $p_2$  denote  primes numbers congruent to $1\pmod 4$ and $q$, $q_1$, $q_2$ denote  primes numbers congruent to $3\pmod 4$.
	\end{lemma}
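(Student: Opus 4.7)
The plan is to combine Lemma \ref{realQuad} with Kuroda's class number formula and genus theory, which together reduce the odd-class-number condition to a finite case analysis on the three quadratic subfields of $K$. First I would establish the necessary condition: if $h(K)$ is odd, then each of the three quadratic subfields $k^{(1)}, k^{(2)}, k^{(3)}$ of $K$ has odd class number. This is immediate from Kuroda's formula
$$h(K) = \frac{1}{4}\, q(K)\, h(k^{(1)})\, h(k^{(2)})\, h(k^{(3)}),$$
since $q(K)=[E_K : E_{k^{(1)}} E_{k^{(2)}} E_{k^{(3)}}]$ is a power of $2$, so the odd part of $h(K)$ contains the odd part of each $h(k^{(i)})$; alternatively, it follows from the surjectivity of the norm map on the odd part of class groups for an unramified-at-infinity real extension. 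Lemma \ref{realQuad} then restricts the discriminants involved to products of the prime types $\{2\}$, $p\equiv 1\pmod 4$, and $q\equiv 3\pmod 4$, with strong constraints.

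Next I would enumerate all triples $(k^{(1)}, k^{(2)}, k^{(3)})$ of real quadratic fields appearing in Lemma \ref{realQuad} whose discriminants multiply to a square (so that they together generate a biquadratic field $K$). Writing $K = \QQ(\sqrt{a},\sqrt{b})$ and listing the three subfields in terms of the primes dividing $ab$, a short compatibility check rules out all combinations except the five shapes $1)$--$5)$ of the statement (e.g., one cannot have two distinct primes $\equiv 1\pmod 4$ as ramified factors in two of the subfields unless the third subfield also contains such primes, forcing the form $5)$; analogous parity constraints give the other cases).

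For each surviving shape I would determine the precise arithmetic condition forcing $h(K)$ to be odd by combining two inputs. The genus theory input gives
$$\rg_2(\mathrm{Cl}(K)/\mathrm{Cl}(K)^2) \;=\; t - 1 - e,$$
where $t$ counts the primes of the maximal real subfield ramified in $K$ and $e$ is a correction coming from units and signatures; this yields the Legendre-symbol conditions $\left(\frac{p}{q}\right)=-1$, $\left(\frac{q_i}{p}\right)=-1$, $\left(\frac{p_1}{p_2}\right)=-1$, and the congruences $p\equiv 5\pmod 8$ or $q_i\equiv 3\pmod 8$. The unit-index input (Kuroda with $q(K)\in\{1,2,4\}$ computed from fundamental units of the quadratic subfields, using the explicit fundamental units in the cases at hand) then promotes $\rg_2(\mathrm{Cl}(K)/2)=0$ to $h(K)$ itself being odd.

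The main obstacle is the most delicate case in shape $5)$, where $\left(\frac{p_1}{p_2}\right)=1$: here genus theory only gives $\rg_2(\mathrm{Cl}(K)/2)\leq 1$, and deciding whether $h(K)$ is actually odd requires the rational biquadratic residue symbol criterion $\left(\frac{p_1}{p_2}\right)_4 \neq \left(\frac{p_2}{p_1}\right)_4$. This is exactly the content of Ku\v{c}era's theorem \cite[Theorem 1]{kuvcera1995parity}, which I would invoke as a black box; its proof goes via a careful analysis of the $2$-part of the circular unit index in the genus field of $K$ and is the one step that does not reduce to the Conner--Hurrelbrink framework. Gathering the five shapes with their respective conditions then reproduces the list $1)$--$5)$.
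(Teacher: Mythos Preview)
The paper does not give its own proof of this lemma: it is stated purely as a citation of Conner--Hurrelbrink \cite[Corollaries 21.2, 21.4, Proposition 21.5]{connor88} together with Ku\v{c}era \cite[Theorem 1]{kuvcera1995parity}, with no argument supplied. So there is nothing to compare your proposal against at the level of the paper itself; your outline is essentially a reconstruction of what those cited references do.

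One point in your sketch deserves tightening. Your claim that $h(K)$ odd forces each $h(k^{(i)})$ odd does not follow from Kuroda's formula alone: since $q(K)\in\{1,2,4\}$, the factor $q(K)/4$ can have $2$-valuation $-2$, $-1$, or $0$, so $h(K)$ odd is compatible with $\prod_i h(k^{(i)})$ having $2$-valuation up to $2$. The clean argument is via class field theory: for each $i$, if $K/k^{(i)}$ is ramified at some prime then the norm $N\colon \mathrm{Cl}(K)\to \mathrm{Cl}(k^{(i)})$ is surjective, hence $h(k^{(i)})\mid h(K)$; and one checks in each of the candidate shapes that the relevant extensions are indeed ramified (or else handles the unramified case directly). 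This is effectively how Conner--Hurrelbrink proceed. The rest of your outline---enumerating compatible triples via Lemma~\ref{realQuad}, applying the ambiguous class number formula to obtain the Legendre-symbol and congruence conditions, and invoking Ku\v{c}era's biquadratic-residue criterion for the case $\left(\frac{p_1}{p_2}\right)=1$ in shape $5)$---is the correct strategy and matches the structure of the cited sources.
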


	\bigskip
	Let $K$ be a real  biquadratic number field such that $K_1/K$ is an extension of   QO-fields that is ramified over $2$.  Then, based on Lemmas \ref{realQuad} and \ref{realBiquad}, we check that $K$ takes one of the following six forms:
	\begin{enumerate}[$    A)$]
		\item   $K=\QQ(\sqrt{q},\sqrt{d})$ where $d > 1$ is an odd  positive  square-free integer that is not divisible by $q$.

			\item  $K=\QQ(\sqrt{2q},\sqrt{d})$ where    $q\equiv3 \pmod 4$   and $d\equiv 1 \pmod 4$ is a  positive square-free integer.

		\item  $K=\QQ(\sqrt{q_1q_2},\sqrt{d})$ where  $q_1\equiv3 \pmod 4$,  $q_2\equiv3 \pmod 8$ are two prime numbers and $d\equiv 1\pmod 4$ is a positive    square-free integer that is not divisible by   $q_1q_2$.

		\item  $K=\QQ(\sqrt{q_1q_2},\sqrt{d})$ or $\QQ(\sqrt{q_1q_2},\sqrt{2d})$,  where  $q_1\equiv7 \pmod 8$,  $q_2\equiv3 \pmod 8$ are two prime numbers and $d\equiv 3\pmod 4$ is a positive        square-free integer that is not divisible by   $q_1q_2$.

\item  $K=\QQ(\sqrt{q_1q_2},\sqrt{d})$ or $\QQ(\sqrt{q_1q_2},\sqrt{2d})$, where $q_1\equiv3 \pmod 8$,  $q_2\equiv3 \pmod 8$ are two prime numbers  and    $d\equiv 3 \pmod 4$ is a  positive square-free integer  that is not divisible by   $q_1q_2$.

		\item  $K=\QQ(\sqrt{p},\sqrt{d})$ where $p $ and $d$  satisfy one of the following conditions:
		\begin{enumerate}[$    a)$]
			
			\item  $p\equiv1 \pmod 4$   and $d=q_1q_2$ for two prime numbers $q_1\equiv q_2\equiv 3\pmod 4$  with   $\left(\frac{q_2}{p}\right)=-1$,
			\item  $p\equiv1 \pmod 4$   and $d=q_1q_2$ for two prime numbers $q_1\equiv 3\pmod 4$ and   $q_2\equiv 3 \pmod 8$,
			\item $p\equiv1 \pmod 4$   and $d\equiv 3 \pmod 4$ are two prime numbers such that $\left(\frac{p}{d}\right)=-1$ or $p\equiv5 \pmod 8$,
			\item $p\equiv1 \pmod 4$   and $d\equiv 1 \pmod 4$ are two prime numbers such that$\left(\frac{p}{d}\right)=-1$ or $[\left(\frac{p}{d}\right)=1$ and $\left(\frac{p}{d}\right)_4\not=\left(\frac{d}{p}\right)_4]$.
		\end{enumerate}

	\end{enumerate}
	Here, $p$ denotes a prime  number  congruent to $1\pmod 4$ and $q$, $q_1$, $q_2$ denote three primes numbers congruent to $3\pmod 4$. Let $r$ and $s$ be two prime numbers. Let us name $L$ any   real biquadratic field   of the form  

\begin{center}
	 {$ \displaystyle L:=\QQ(\sqrt{\delta_0},\sqrt{r})\text{ or }\QQ(\sqrt{\delta_0},\sqrt{rs}) \text{ where }   r\equiv 1\pmod8   \text{ and }  \left(\frac{\delta_0}{r}\right)=1$}
\end{center}
 
 where $\delta_0\in\{q, q_1q_2\}	$.  
We keep the above notations for the rest of the paper. The main theorem of this paper is stated as below.

	\begin{theorem}[{\bf  The Main Theorem}]\label{maintheorem}	Let $K$ be a real biquadratic number field that is of the form $A)$, $B)$ or $C)$ with $K\not=L$.  
		Then  $\rg(A(K_\infty))\leq 2$  and  $ \rg(A(K_\infty))=\rg( A(K))$ if and only if $K$ takes one of the following forms:
		\begin{enumerate}[$1)$]
			\item $K=\QQ(\sqrt{q},\sqrt{r})$,  where  $q\equiv 3\pmod 4$ and $r$ are two prime numbers  such that we have one of the following congruence conditions:
			\begin{enumerate}[$\C1:$]
				\item   $r\equiv 3$ or $5\pmod 8$,
				   
				\item   $r\equiv 7\pmod 8$ and  $q\equiv 3\pmod 8$.
	\end{enumerate}
	$\textbf{In this case, we have: } A(K_\infty)  =0.$
		
			\item $K=\QQ(\sqrt{q},\sqrt{rs})$, where  $q\equiv 3\pmod 4$, $r$ and $s$ are prime numbers such that $ \left(\frac{q}{s}\right)=\left(\frac{q}{r}\right)=-1$ and we have one of the following congruence conditions:
			\begin{enumerate}[$\C1:$]
				\item $r\equiv     5\pmod 8$ and  $s\equiv     3\pmod 8$, 
				
				\item $r\equiv     3$ or $5\pmod 8$,  $s\equiv    7\pmod 8$  and $q\equiv 3\pmod 8$.
			\end{enumerate}
	 	$\textbf{In this case, we have: } \rg(A(K_\infty))  =1.$

	\item $K=\QQ(\sqrt{q},\sqrt{rs})$, where  $q\equiv 3\pmod 4$, $r$ and $s$ are prime numbers such that $ \left(\frac{q}{r}\right)=-\left(\frac{q}{s}\right)=-1$ and we have one of the following congruence conditions:
	\begin{enumerate}[$\C1:$]
		\item $r\equiv    5 \pmod 8$ and  $s\equiv   5$ or $3\pmod 8$,
		\item   $r\equiv     3 \pmod 8$,  $s\equiv    3\pmod 8$  and $q\equiv 7\pmod 8$, 
		\item $r\equiv    3 \pmod 8$ and  $s\equiv   5\pmod 8$,
			\item    $r\equiv   7\pmod 8$,  $s\equiv  3$ or $5\pmod 8$ and   $q\equiv 3\pmod 8$,
		
		\item      $r\equiv  3$ or $5\pmod 8$, $s\equiv   7\pmod 8$ and $q\equiv 3\pmod 8$.	
		
	\end{enumerate}
	$\textbf{In this case, we have: } \rg(A(K_\infty))  =1.$

		\item  $K=\QQ(\sqrt{q},\sqrt{rs})$, where  $q\equiv 3\pmod 4$, $r$ and $s$ are prime numbers such that $ \left(\frac{q}{r}\right)=\left(\frac{q}{s}\right)=1$  and we have one of the following congruence conditions:
	\begin{enumerate}[$\C1:$]
		\item   $r\equiv    7\pmod 8$, $s\equiv     3$ or $5\pmod 8$,  and $q\equiv 3\pmod 8$,
	
	\item $r\equiv     3 \pmod 8$ and  $s\equiv    5\pmod 8$.
	\end{enumerate}
	$\textbf{In this case, we have: } \rg(A(K_\infty))  =1.$	
	 
			\item  $K=\QQ(\sqrt{q},\sqrt{rs})$, where  $q\equiv 3\pmod 4$, $r$ and $s$ are prime numbers such that $ \left(\frac{q}{r}\right)=\left(\frac{q}{s}\right)=1$  and $r\equiv s\equiv 3$ or $5\pmod 8$. 
			
			\noindent$\textbf{In this case, we have: } \rg(A(K_\infty))  =2.$

		\item $K=\QQ(\sqrt{q},\sqrt{rst})$, where  $q\equiv 3\pmod 4$, $r\equiv 3\pmod 8$, $s\equiv 5\pmod 8$ and $t$ are prime numbers such that $ \left(\frac{q}{r}\right)=\left(\frac{q}{s}\right)=-\left(\frac{q}{t}\right)=1$ and we have one of the following conditions:
			\begin{enumerate}[$\C1:$]
			\item   $t\equiv          5\pmod 8$,    
			
			\item  $t\equiv          3\pmod 8$ and  $q\equiv          7\pmod 8$,  
			
			\item $t\equiv          7\pmod 8$ and  $q\equiv          3\pmod 8$.
		\end{enumerate}
	 	
	\noindent$\textbf{In this case, we have: } \rg(A(K_\infty))  =2.$

	\item $K=\QQ(\sqrt{q},\sqrt{rst})$, where  $q\equiv 3\pmod 4$, $r\equiv 5\pmod 8$, $s$ and $t$ are prime numbers such that $ \left(\frac{q}{r}\right)=-\left(\frac{q}{s}\right)=-\left(\frac{q}{t}\right)=1$ and we have one of the following conditions:
	\begin{enumerate}[$\C1:$]
		\item   $s\equiv 3\pmod 8$ and     $t\equiv5\pmod 8$, 
		
		\item  $s\equiv 7\pmod 8$, $t\equiv          3\pmod 8$ and  $q\equiv          3\pmod 8$.
	\end{enumerate}	 
	 	\noindent$\textbf{In this case, we have: } \rg(A(K_\infty))  =2.$
	 	
	 	\item $K=\QQ(\sqrt{q},\sqrt{rst})$, where  $q\equiv 3\pmod 4$, $r\equiv 3\pmod 8$, $s$ and $t$ are prime numbers such that $ \left(\frac{q}{r}\right)=-\left(\frac{q}{s}\right)=-\left(\frac{q}{t}\right)=1$ and we have one of the following conditions:
	 \begin{enumerate}[$\C1:$]
	 	\item   $s\equiv 3\pmod 8$ and     $t\equiv5\pmod 8$ and  $q\equiv          7\pmod 8$, 
	 	
	 	\item  $s\equiv 5\pmod 8$, $t\equiv          7\pmod 8$ and  $q\equiv          3\pmod 8$.
	 \end{enumerate}	 
	 \noindent$\textbf{In this case, we have: } \rg(A(K_\infty))  =2.$

	 \item  $K=\QQ(\sqrt{q},\sqrt{rst})$, where $q\equiv 3\pmod 8$, $r\equiv 3\pmod 8$, $s\equiv 5\pmod 8$ and  $t\equiv 7\pmod 8$   
	 are prime numbers such that $ \left(\frac{q}{r}\right)=\left(\frac{q}{s}\right)= \left(\frac{q}{t}\right)=-1$.

	 \noindent$\textbf{In this case, we have: } \rg(A(K_\infty))  =2.$

	 \item $K=\QQ(\sqrt{2q},\sqrt{r})$, where $q\equiv 3\pmod 4$ and $r\equiv     5 \pmod 8$     are prime numbers.
	 
	 \noindent$\textbf{In this case, we have: } \rg(A(K_\infty))  =0.$

	 	\item $K=\QQ(\sqrt{2q},\sqrt{rs})$, where $q\equiv 3\pmod 8$, $r\equiv     3 \pmod 8$ and  $s\equiv    7\pmod 8$   are prime numbers such that $ \left(\frac{q}{r}\right)=\left(\frac{q}{s}\right)=-1$.
	 		 
	 	  \noindent$\textbf{In this case, we have: } \rg(A(K_\infty))  =1.$ 
	 		 
	 		\item $K=\QQ(\sqrt{2q},\sqrt{rs})$,	 where $q\equiv 3\pmod 4$, $r$ and $s$    are prime numbers such that $ \left(\frac{q}{r}\right)=-\left(\frac{q}{s}\right)=-1$ and satisfy one of the following conditions:
	 		\begin{enumerate}[$\C1:$]
	 			\item   $r\equiv      s\equiv    5\pmod 8$,   
	 			
	 			\item $r\equiv s \equiv    3 \pmod 8$ and   $q\equiv 7\pmod 8$,

	 				\item  $r\equiv7 \pmod 8$, $s \equiv    3 \pmod 8$ and   $q\equiv 3\pmod 8$,
	 				
	 				\item  $r\equiv3 \pmod 8$, $s \equiv    7 \pmod 8$ and   $q\equiv 3\pmod 8$.
	 		\end{enumerate}
	 		
	 		  \noindent$\textbf{In this case, we have: } \rg(A(K_\infty))  =1.$

	 	\item $K=\QQ(\sqrt{2q},\sqrt{rs})$, where $q\equiv 3\pmod 8$, $r\equiv     7 \pmod 8$ and  $s\equiv    3\pmod 8$   are prime numbers such that $ \left(\frac{q}{r}\right)=\left(\frac{q}{s}\right)=1$.

	 	\noindent$\textbf{In this case, we have: } \rg(A(K_\infty))  =1.$

	 		\item $K=\QQ(\sqrt{2q},\sqrt{rs})$, where $q\equiv 3\pmod 4$, $r\equiv s \equiv   3$ or $5 \pmod 8$     are prime numbers such that $ \left(\frac{q}{r}\right)=\left(\frac{q}{s}\right)=-1$.

	 	\noindent$\textbf{In this case, we have: } \rg(A(K_\infty))  =2.$

	 	\item   $K=\QQ(\sqrt{2q},\sqrt{rst})$, where $q\equiv 3\pmod 4$, $r\equiv 3 \pmod 8 $, $s\equiv5 \pmod 8$, $t$     are prime numbers satisfying one of the following conditions:
	  \begin{enumerate}[$\C1:$]
	 	\item   $t\equiv   3\pmod 8$, $q\equiv 7\pmod 8$,  $ -\left(\frac{q}{r}\right)=-\left(\frac{q}{s}\right)=\left(\frac{q}{t}\right)=1$,
	 	
	 	\item  $t\equiv   7\pmod 8$, $q\equiv 3\pmod 8$,  $ \left(\frac{q}{r}\right)=\left(\frac{q}{s}\right)=\left(\frac{q}{t}\right)=-1$.
	 \end{enumerate}

	 \noindent$\textbf{In this case, we have: } \rg(A(K_\infty))  =2.$

	  \item   $K=\QQ(\sqrt{2q},\sqrt{rst})$, where $q\equiv 3\pmod 4$, $r$, $s$, $t$     are prime numbers satisfying one of the following conditions:
	  \begin{enumerate}[$\C1:$]
	  	\item  $r\equiv   3\pmod 8$, $s\equiv   3\pmod 8$,  $t\equiv  5\pmod 8$, $q\equiv 7\pmod 8$,  $ -\left(\frac{q}{r}\right)=\left(\frac{q}{s}\right)=\left(\frac{q}{t}\right)=1$,
	  	
	  	\item   $r\equiv   3\pmod 8$, $s\equiv   7\pmod 8$,  $t\equiv   5\pmod 8$, $q\equiv 3\pmod 8$,  $ -\left(\frac{q}{r}\right)=-\left(\frac{q}{s}\right)=\left(\frac{q}{t}\right)=1$,
	  	
		  	\item   $r\equiv   5\pmod 8$, $s\equiv   7\pmod 8$,  $t\equiv   3\pmod 8$, $q\equiv 3\pmod 8$,  $ -\left(\frac{q}{r}\right)=-\left(\frac{q}{s}\right)=\left(\frac{q}{t}\right)=1$, 
		  	
		  		\item   $r\equiv   3\pmod 8$, $s\equiv   7\pmod 8$,  $t\equiv   5\pmod 8$, $q\equiv 3\pmod 8$,  $ \left(\frac{q}{r}\right)=-\left(\frac{q}{s}\right)=\left(\frac{q}{t}\right)=1$. 	
	  	
	  \end{enumerate}

	  \noindent$\textbf{In this case, we have: } \rg(A(K_\infty))  =2.$

	 	\item $K=\QQ(\sqrt{2q},\sqrt{qr})$, where $q\equiv 3\pmod 4$ and $r$     are prime numbers  such that we have one of the following congruence conditions:
	 	\begin{enumerate}[$\C1:$]
	 		\item   $r\equiv   3\pmod 8$,  
	 		
	 		\item $r\equiv     7 \pmod 8$ and   $q\equiv 3\pmod 8$.
	 	\end{enumerate}

	 	\noindent$\textbf{In this case, we have: } \rg(A(K_\infty))  =0.$

	 \item $K=\QQ(\sqrt{2q},\sqrt{qrs})$, where  $q\equiv 3\pmod 4$, $r$ and $s$ are prime numbers such that $ \left(\frac{q}{r}\right)=\left(\frac{q}{s}\right)=-1$  and we have one of the following congruence conditions:
	 \begin{enumerate}[$\C1:$]
	 	\item   $r\equiv    5\pmod 8$, $s\equiv     3 \pmod 8$,  
	 	
	 	\item $r\equiv     5 \pmod 8$ and  $s\equiv    7\pmod 8$ and $q\equiv 3\pmod 8$.
	 \end{enumerate}
	
	\noindent $\textbf{In this case, we have: } \rg(A(K_\infty))  =1.$ 
	 			
	 	 \item  $K=\QQ(\sqrt{2q},\sqrt{qrs})$, where  $q\equiv 3\pmod 4$, $r$ and $s$ are prime numbers such that $ \left(\frac{q}{r}\right)=-\left(\frac{q}{s}\right)=-1$  and we have one of the following congruence conditions:
	 	 \begin{enumerate}[$\C1:$]
	 	 	\item   $r\equiv    5\pmod 8$, $s\equiv     3 \pmod 8$,  
	 	 	
	 	 	\item $r\equiv     3 \pmod 8$ and  $s\equiv    5\pmod 8$,
	 	 	
	 	 		\item   $r\equiv     7 \pmod 8$,  $s\equiv    5\pmod 8$ and $q\equiv 3\pmod 8$,
	 	 	\item  $r\equiv     5\pmod 8$,  $s\equiv    7\pmod 8$ and $q\equiv 3\pmod 8$.
	 	 	
	 	 \end{enumerate}
	 	
	 		\noindent $\textbf{In this case, we have: } \rg(A(K_\infty))  =1.$ 
	 	
	 	\item  $K=\QQ(\sqrt{2q},\sqrt{qrs})$, where  $q\equiv 3\pmod 4$, $r$ and $s$ are prime numbers such that $ \left(\frac{q}{r}\right)=\left(\frac{q}{s}\right)=1$  and we have one of the following congruence conditions:
	 	\begin{enumerate}[$\C1:$]
	 		\item   $r\equiv    7\pmod 8$, $s\equiv     5 \pmod 8$ and $q\equiv 3\pmod 8$,  
	 		
	 		\item $r\equiv     3 \pmod 8$ and  $s\equiv    5\pmod 8$.
	 	\end{enumerate}
	 	
	 	\noindent $\textbf{In this case, we have: } \rg(A(K_\infty))  =1.$

	   	\item $K=\QQ(\sqrt{2q},\sqrt{qrst})$, where $q\equiv 3\pmod 4$, $r\equiv 3 \pmod 8 $, $s\equiv5 \pmod 8$, $t\equiv5 \pmod 8$     are prime numbers such that 
	   	$ -\left(\frac{q}{r}\right)=-\left(\frac{q}{s}\right)=\left(\frac{q}{t}\right)=1$.

	   	\noindent$\textbf{In this case, we have: } \rg(A(K_\infty))  =2.$

  	\item  $K=\QQ(\sqrt{2q},\sqrt{qrst})$, where $q\equiv 3\pmod 4$, $r\equiv 5 \pmod 8 $, $s\equiv3 \pmod 8$, $t\equiv5 \pmod 8$     are prime numbers such that 
  $ -\left(\frac{q}{r}\right)=\left(\frac{q}{s}\right)=\left(\frac{q}{t}\right)=1$.

  \noindent$\textbf{In this case, we have: } \rg(A(K_\infty))  =2.$

			\item   $K=\QQ(\sqrt{q_1q_2},\sqrt{\delta r})$ where   $q_1\equiv3 \pmod 4$,  $q_2\equiv3 \pmod 8$ and $r$ are prime numbers, and   $\delta\in  \{1,q_1,q_2\}$ with $\delta r\equiv 1\pmod 4$ and such that we have one of the following conditions:
			\begin{enumerate}[$\C1:$]
				\item $r\equiv 3\pmod 8$,
				\item $r\equiv 5\pmod 8$, $ q_1\equiv 3\pmod 8$ and  $\left(\frac{q_1q_2}{ r}\right)=-1$,
				\item $r\equiv 5\pmod 8$, $ q_1\equiv 7\pmod 8$ and  $\left(\frac{q_1}{ r}\right)=-1$,

				\item  $r\equiv  7\pmod 8$,   $q_1\equiv 3\pmod 8$.
				
			\end{enumerate}
							$\textbf{In this case, we have: } \rg(A(K_\infty))  =0.$

		 \item   $K=\QQ(\sqrt{q_1q_2},\sqrt{ r})$ where   $q_1\equiv3 \pmod 4$,  $q_2\equiv3 \pmod 8$ and   
			$r\equiv 5\pmod 8$ are prime numbers such that   $\left(\frac{q_1q_2}{ r}\right)=\left(\frac{q_1}{ r}\right)=1$.  

		 	\noindent$\textbf{In this case, we have: } \rg(A(K_\infty))  =1.$

	\item   $K=\QQ(\sqrt{q_1q_2},\sqrt{\delta rs})$ where     $r$ and $s$ are prime numbers such that $ \left(\frac{q_1q_2}{s}\right)=\left(\frac{q_1q_2}{r}\right)=-1$, and   $\delta\in  \{1,q_1,q_2\}$ with $\delta rs\equiv 1\pmod 4$ and such that we have one of the following conditions:
	\begin{enumerate}[$\C1:$]
			\item  $r\equiv 5\pmod 8$ and $s\equiv 3\pmod 8$ with $q_1\equiv 3\pmod 8$ or $[q_1\equiv 7\pmod 8$ and $\left(\frac{q_1}{r}\right)=-1]$,
		
		\item $r\equiv 3\pmod 8$ and $s\equiv 3\pmod 8$ with  $q_1\equiv 7\pmod 8$ and $\left(\frac{q_1}{r}\right)\not=\left(\frac{q_1}{s}\right)$, 
		
		\item $r\equiv 3\pmod 8$ and $s\equiv 7\pmod 8$ with  $q_1\equiv 3\pmod 8$.
	\end{enumerate}
	$\textbf{In this case, we have: } \rg(A(K_\infty))  =1.$

		  \item   $K=\QQ(\sqrt{q_1q_2},\sqrt{\delta rs})$ where     $r$ and $s$ are prime numbers such that $ \left(\frac{q_1q_2}{r}\right)=-\left(\frac{q_1q_2}{s}\right)=-1$, and   $\delta\in  \{1,q_1,q_2\}$ with $\delta rs\equiv 1\pmod 4$ and such that we have one of the following conditions:
		 \begin{enumerate}[$\C1:$]
		 	\item  $ r\equiv         s\equiv 3\pmod 8$ with $q_1\equiv 7\pmod 8$ and $\left(\frac{q_1}{r}\right) =-\left(\frac{q_1}{s}\right)=1$,
		 	\item $ r\equiv     3\pmod 8$ and  $    s\equiv 5\pmod 8$  with $q_1\equiv 7\pmod 8$ and $\left(\frac{q_1}{s}\right) =-1$,  
		 	
		 	\item   $ r\equiv     5\pmod 8$ and  $    s\equiv 3\pmod 8$ with   $q_1\equiv 3\pmod 8$ or    $[q_1\equiv 7\pmod 8$ and $\left(\frac{q_1}{r}\right) =-1]$.
		 \end{enumerate}
		 $\textbf{In this case, we have: } \rg(A(K_\infty))  =1.$

		 \item  $K=\QQ(\sqrt{q_1q_2},\sqrt{\delta rs})$ where     $r$ and $s$ are prime numbers such that $ \left(\frac{q_1q_2}{r}\right)=\left(\frac{q_1q_2}{s}\right)=1$, and   $\delta\in  \{1,q_1,q_2\}$ with $\delta rs\equiv 1\pmod 4$ and such that we have one of the following conditions:
		 \begin{enumerate}[$\C1:$]
		 	\item  $r\equiv 3\pmod 8$ and $s\equiv 3\pmod 8$  with       $ q_1\equiv 7\pmod 8$ and $\left(\frac{q_1}{r}\right) \not=\left(\frac{q_1}{s}\right) $,
		 	
		 	\item   $r\equiv 5\pmod 8$ and $s\equiv 3\pmod 8$  with       $ q_1\equiv 7\pmod 8$ and $\left(\frac{q_1}{r}\right) =-1$.
		 \end{enumerate}
		 $\textbf{In this case, we have: } \rg(A(K_\infty))  =1.$

		  \item   $K=\QQ(\sqrt{q_1q_2},\sqrt{\delta rs})$ where     $r$ and $s$ are prime numbers such that $ \left(\frac{q_1q_2}{r}\right)=-\left(\frac{q_1q_2}{s}\right)=-1$, and   $\delta\in  \{1,q_1,q_2\}$ with $\delta rs\equiv 1\pmod 4$ and such that we have one of the following conditions:
		 \begin{enumerate}[$\C1:$]

		 \item  $r\equiv   s\equiv 5\pmod 8$    with   
		 $[q_1\equiv 3\pmod 8$ and $\left(\frac{q_1}{s}\right)=1]$ or   $[q_1\equiv 7\pmod 8$ and $-\left(\frac{q_1}{r}\right) =\left(\frac{q_1}{s}\right)=1]$.

		 \item $r\equiv     3\pmod 8$ and  $   s\equiv 5\pmod 8$ with$\left(\frac{q_1}{s}\right)=1 $.
		 \end{enumerate}
		 $\textbf{In this case, we have: } \rg(A(K_\infty))  =2.$

		 \item   $K=\QQ(\sqrt{q_1q_2},\sqrt{\delta rs})$ where     $r$ and $s$ are prime numbers such that $ \left(\frac{q_1q_2}{r}\right)=\left(\frac{q_1q_2}{s}\right)=1$, and   $\delta\in  \{1,q_1,q_2\}$ with $\delta rs\equiv 1\pmod 4$ and such that we have one of the following conditions:
		 \begin{enumerate}[$\C1:$]
		 	\item   $r\equiv 7\pmod 8$ and $s\equiv 5 \pmod 8$ with   $q_1\equiv 3\pmod 8$ and     
		 	$\left(\frac{q_1}{s}\right)=1$,

		 	\item $r\equiv 7\pmod 8$ and $s\equiv 3 \pmod 8$ with   $q_1\equiv 3\pmod 8$ and $\left(\frac{q_1}{r}\right) =\left(\frac{q_1}{s}\right)$,
		  	\item $r\equiv 3\pmod 8$ and $s\equiv 3\pmod 8$ with   $q_1\equiv 3\pmod 4$ and $\left(\frac{q_1}{r}\right) =\left(\frac{q_1}{s}\right)$,
		  	\item $r\equiv 5\pmod 8$ and $s\equiv 5\pmod 8$ with       $ q_1\equiv 7\pmod 8$ and $[\left(\frac{q_1}{r}\right)=-1$ or $  \left(\frac{q_1}{s}\right)=-1 ]$,
		 	 
		 	\item $r\equiv 5\pmod 8$ and $s\equiv 3\pmod 8$ with    $q_1\equiv 3\pmod 4$  and $\left(\frac{q_1}{r}\right)=1$.
		 	
		 	
		 \end{enumerate}
		 $\textbf{In this case, we have: } \rg(A(K_\infty))  =2.$

		\end{enumerate}	
	\end{theorem}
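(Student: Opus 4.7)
The plan is to implement the strategy outlined just before the theorem statement. Lemmas \ref{realQuad} and \ref{realBiquad} have already reduced the problem to the six families $A)$--$F)$ of real biquadratic fields such that both $K$ and $K_1=K(\sqrt 2)$ are QO-fields and $K_1/K$ is ramified at $2$. The present theorem concerns only the subfamilies $A),B),C)$, excluding the special form $L$. For each candidate $K$ the argument has three stages: first, compute $\rg(A(K))$ by genus theory and extract those $K$ with $\rg(A(K))\leq 2$; second, compute $\rg(A(K_1))$ and retain only those with $\rg(A(K_1))=\rg(A(K))$; third, climb the cyclotomic $\ZZ_2$-tower to pass from the equality at the first layer to the statement about $A(K_\infty)$.

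For stages one and two, I would use Chevalley's ambiguous class number formula applied to the QO-extensions $K/k'$ and $K_1/K$ (with $k'$ an odd-class-number quadratic base field provided by Lemma \ref{realQuad}). Since $h(k')$ is odd, $\rg(A(K))$ equals the $2$-rank of the ambiguous class group of $K/k'$, which is governed by the number of prime ideals of $k'$ ramifying in $K$ together with a norm residue contribution from $E_{k'}$, essentially $\pm 1$ here. The analogous statement holds for $K_1/K$, but now one must carefully compute the decomposition of every odd prime dividing the discriminant, as well as the behavior of $2$, in the seven quadratic subfields of the triquadratic field $K_1$. The congruences modulo $8$ for each of $q,p,q_1,q_2,r,s,t$ and the Legendre symbols relating them are precisely what dictate the number of ramified primes and the values of the relevant Hilbert symbols. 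This is the mechanism that spawns the proliferation of congruence sub-cases $\C1,\C2,\ldots$ in each of the items $1)$--$29)$. The exclusion $K\neq L$ is imposed exactly to avoid the configurations in which quartic residue symbols would be needed to pin down the rank; inside $L$ the rank calculation would require finer data and is deferred to the second paper announced in the introduction.

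For stage three, I would invoke Fukuda's theorem: if every prime of $K$ above $2$ is ramified in $K_\infty/K$ (which is automatic here) and if $\rg(A(K_n))=\rg(A(K_{n+1}))$ for some $n\geq 0$, then $\lambda_K=\mu_K=0$ and $|A(K_m)|$ is constant for $m\geq n$. Combined with Greenberg's criterion and the standard co-invariants argument for Iwasawa modules over $\ZZ_2$-extensions, an equality of $2$-ranks already at the level $n=0$, which is exactly what stages one and two furnish, is enough to force $A(K_\infty)\simeq A(K)$ as abelian groups. From $\rg(A(K))\leq 2$ one then reads off the claimed shape in each item: trivial, cyclic, or of type $\ZZ/2\ZZ\times\ZZ/2^n\ZZ$, with the explicit structure determined by the finite group $A(K)$ that was already computed in stage one.

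The main obstacle is the bookkeeping in stage two. Even though each individual genus-theoretic computation is elementary, the $2$-rank of $A(K_1)$ for the triquadratic field $K_1=K(\sqrt 2)$ depends on a substantial array of Hilbert symbols attached to the ramified primes in all intermediate quadratic subfields, and the constraint $\rg(A(K_1))=\rg(A(K))$ translates into tight linear relations among these symbols. Organising the case analysis by the form $A),B),C)$, then by the factorisation type of each odd prime in $\QQ(\sqrt 2)$, and finally by the Legendre symbols $\left(\frac{q}{r}\right)$, $\left(\frac{q_1}{r}\right)$, etc., produces exactly the list $1)$--$29)$ and confirms that no other configuration survives both filters.
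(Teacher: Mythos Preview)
Your overall architecture is right, but there are two genuine gaps.

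First, in stage two you propose applying the ambiguous class formula to the extension $K_1/K$. That formula (Lemma \ref{AmbiguousClassNumberFormula}) requires the base field to have odd class number, but $K$ typically does not: in every item of the theorem with $\rg(A(K))\geq 1$ the field $K$ has even class number, so $K_1/K$ is \emph{not} a QO-extension in the sense needed. The paper instead exploits the fact that $K_1$ is a QO-field over a \emph{different} biquadratic base, namely $K_1'=\QQ(\sqrt q,\sqrt 2)$ for form $A)$ and $B)$, and $K_1'=\QQ(\sqrt{q_1q_2},\sqrt 2)$ for form $C)$; these have odd class number by Lemma \ref{realBiquad}. The rank of $A(K_1)$ is then $t_{K_1/K_1'}-1-e_{K_1/K_1'}$, and the hard work is computing $e_{K_1/K_1'}$ via norm residue symbols against the full unit group $E_{K_1'}$, which is not ``essentially $\pm1$'' but rather $\langle -1,\varepsilon_2,\sqrt{\varepsilon_q},\sqrt{\varepsilon_{2q}}\rangle$ (or the analogous group with $q_1q_2$). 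The case $r\equiv 7\pmod 8$ with $\left(\frac{q}{r}\right)=1$ is particularly delicate: direct computation of the symbols for $\sqrt{\varepsilon_q}$ and $\sqrt{\varepsilon_{2q}}$ is not feasible, and the paper resorts to an auxiliary unramified extension $K_1/M$ with $M=\QQ(\sqrt{2r},\sqrt q)$ and a class-number argument to determine $e_{K_1/K_1'}$ indirectly.

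Second, your stage three overclaims. Fukuda's theorem (Lemma \ref{lm fukuda}) has two separate clauses: equality of \emph{ranks} at consecutive layers forces stability of ranks, while equality of \emph{orders} forces stability of orders. Rank equality $\rg(A(K))=\rg(A(K_1))$ gives only $\rg(A(K_\infty))=\rg(A(K))$, which is exactly what the Main Theorem asserts; it does \emph{not} give $A(K_\infty)\simeq A(K)$, $\lambda_K=0$, or constancy of $|A(K_m)|$. Those stronger conclusions are the subject of the separate Section \ref{sec4} and require additional input (class-number computations, Hilbert $2$-class field tower arguments via Theorem \ref{AabounePrzekhini} and Proposition \ref{LemBenjShn}).
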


	We note that a suitable permutation of  the primes $r$, $s$ and $t$ is taken into account in the above theorem.
	
\begin{remark}
	Note that the above theorem gives a partial answer for the problem   proposed  in the abstract that we  stated as follows:
	
		\begin{center}
		What are real biquadratic number fields $k$ such that,   $\rg(A(k_\infty))=\rg(A(k_1))$ ?
	\end{center}
	
The determination of these fields may have several interesting applications on Greenberg's conjecture and class field theory, as we can control the size of $A(k_\infty)$ through the class number of $k_1$ which is a triquadratic number field (see the last section of examples of such applications). 	Notice that the  investigation of this problem for the fields $K$ of the forms appearing in $A)$ to $F)$  also  gives    some families of real biquadratic fields $K'$ such that $K'_1/K'$ is unramified or  $K'_1/K'$ is not an extension of QO-fields. 
Following is the list of such $K'$:  
\begin{enumerate}[$\bullet$]
   	\item   $K'=\QQ(\sqrt{2q},\sqrt{d})$ or $\QQ(\sqrt{q},\sqrt{2d})$, where $d > 1$ is an odd  positive  square-free integer that is not divisible by $q$.
	
 	\item  $K'=\QQ(\sqrt{2q},\sqrt{2d})$, where    $q\equiv3 \pmod 4$   and $d\equiv 1 \pmod 4$ is a  positive square-free integer that is not divisible by   $q$.
	
 	\item  $K'=\QQ(\sqrt{2q_1q_2},\sqrt{d})$ or $\QQ(\sqrt{q_1q_2},\sqrt{2d})$, where  $q_1\equiv3 \pmod 4$,  $q_2\equiv3 \pmod 8$ are two prime numbers and $d\equiv 1\pmod 4$ is a positive    square-free integer that is not divisible by   $q_1q_2$.
\end{enumerate}	
In fact, for the fields $K'$ such that $K'_1=K_1$, where  $K$ is a real  biquadratic field that is taking one of the forms appearing in the main theorem, we have $\rg(K'_\infty)=\rg(K'_1)=\rg(K)$.
\end{remark}

	\bigskip
	The plan of this paper is the following. In Section \ref{sec2}, we collect some useful preliminary results. In Section \ref{sec3}, we present the proof of the main theorem. Section 
	\ref{sec4} is dedicated to the investigation of  the structure of the $2$-Iwasawa module  of  certain real biquadratic fields.

	\section{\bf Preliminary results}\label{sec2}
	
In this section, we collect some preliminary results concerning $2$-class group,  $2$-class number, unit group and Hilbert $2$-class field tower of a number field.

\subsection{Class number formula and the $2$-rank of the class group}

	\begin{lemma}[\cite{Ku-50}]\label{wada's f.}
		Let $k$ be a multiquadratic number field of degree $2^n$, $n\geq 2$,  and $k_i$ be the $s=2^n-1$ quadratic subfields of $k$. Then
		$$h(k)=\frac{1}{2^v}q(k)\prod_{i=1}^{s}h(k_i),$$
		where  $ q(k)=[E_k: \prod_{i=1}^{s}E_{k_i}]$ and   $$     v=\left\{ \begin{array}{cl}
			n(2^{n-1}-1); &\text{ if } k \text{ is real, }\\
			(n-1)(2^{n-2}-1)+2^{n-1}-1 & \text{ if } k \text{ is imaginary.}
		\end{array}\right.$$
	\end{lemma}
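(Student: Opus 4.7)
The approach is analytic. Since $G=\mathrm{Gal}(k/\QQ)\cong(\ZZ/2\ZZ)^n$ is elementary abelian, Artin factorisation and the identity $L(\,\cdot\,,\chi_i)=\zeta_{k_i}/\zeta$ for each quadratic character $\chi_i$ produce the factorisation
\[
\zeta_k\cdot\zeta^{\,s-1}=\prod_{i=1}^{s}\zeta_{k_i},\qquad s=2^n-1.
\]
Comparing leading coefficients at the pole $z=1$ gives $\rho_k=\prod_i\rho_{k_i}$, where $\rho_F=\lim_{z\to1}(z-1)\zeta_F(z)$. The conductor--discriminant relation $|d_k|=\prod_i|d_{k_i}|$ cancels the discriminants in the analytic class number formula, and a direct signature count (namely $\sum_i r_2(k_i)=r_2(k)$ and $\sum_i r_1(k_i)-r_1(k)=2^n-2$, which holds uniformly for totally real and CM $k$) cancels the $(2\pi)$-factors, producing
\[
h(k)=2^{2^n-2}\cdot\frac{w(k)}{\prod_i w(k_i)}\cdot\frac{\prod_i R_{k_i}}{R_k}\cdot\prod_{i=1}^{s}h(k_i).
\]

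The central step is to rewrite $R_k$ in terms of $q(k)$. Exploiting $\log|\sigma(\epsilon_i)|=\chi_i(\sigma)\log\epsilon_i$, the regulator in $k$ of the system of fundamental units $\{\epsilon_i:k_i\text{ real}\}$ factors as $R_{\mathrm{prod}}=|\det(e_j\chi_i(\sigma_j))|\prod_iR_{k_i}$, where $e_j\in\{1,2\}$ is the place factor at the $j$-th infinite place of $k$. The sign matrix is a Hadamard sub-minor of the character table of $(\ZZ/2\ZZ)^n$ (respectively of $\mathrm{Gal}(k^+/\QQ)\cong(\ZZ/2\ZZ)^{n-1}$ in the CM case, since only real subfields contribute units). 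Expanding the full $2^n\times 2^n$ Hadamard determinant $\pm 2^{n\cdot 2^{n-1}}$ along its all-ones row forces every such sub-minor to have absolute determinant $2^{n(2^{n-1}-1)}$, and the analogous statement for $(\ZZ/2\ZZ)^{n-1}$ gives $2^{(n-1)(2^{n-2}-1)}$. Together with the complex-place factor $\prod_je_j=2^{2^{n-1}-1}$ in the CM case, this yields
\[
\frac{R_\mathrm{prod}}{\prod_i R_{k_i}}=\begin{cases}2^{n(2^{n-1}-1)}&(k\text{ real}),\\[2pt]2^{(n-1)(2^{n-2}-1)+2^{n-1}-1}&(k\text{ CM}).\end{cases}
\]

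Writing $\alpha=[\mu_k:\prod_i\mu_{k_i}]$, the decomposition of $q(k)$ into torsion and free parts is $q(k)=\alpha\cdot[\bar E_k:\prod_i\bar E_{k_i}]=\alpha\cdot R_\mathrm{prod}/R_k$. The final combinatorial input is the universal identity
\[
\frac{w(k)}{\alpha\prod_i w(k_i)}=\frac{|\prod_i\mu_{k_i}|}{\prod_i|\mu_{k_i}|}=2^{2-2^n},
\]
which is trivial in the real case and verified in the CM case by a four-way case analysis on whether $\QQ(i)$ and $\QQ(\sqrt{-3})$ lie in $k$: an imaginary quadratic subfield contributes an extra factor $4$ or $6$ to the numerator only when it equals one of these two fields, and at most one subfield of each type can occur, so the lcm in $|\prod_i\mu_{k_i}|=\mathrm{lcm}_i|\mu_{k_i}|$ absorbs exactly these contributions. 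Substituting the last three displays collapses the powers of $2$ to give exactly the stated exponent $v$.

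The main obstacle is the bookkeeping of three independent powers of $2$: the Hadamard sub-minor determinant, the complex-place correction $2^{2^{n-1}-1}$ in the CM regulator, and the torsion identity. Once these three combinatorial facts are established, both values of $v$ drop out by arithmetic, the CM formula $(n-1)(2^{n-2}-1)+2^{n-1}-1$ differing from the real formula $n(2^{n-1}-1)$ precisely by the extra $2^{n-1}-1$ coming from the complex-place regulator correction.
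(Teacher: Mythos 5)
The paper does not prove this lemma at all: it is quoted verbatim from Kuroda \cite{Ku-50} (see also Wada \cite{wada} and Lemmermeyer's survey of Kuroda's class number formula), so there is no in-paper argument to compare against. Your analytic derivation is the classical proof of that result and, as far as I can check, it is correct: the factorisation $\zeta_k\zeta^{s-1}=\prod_i\zeta_{k_i}$, the cancellation of discriminants via conductor--discriminant, the signature counts (in the CM case the $(2\pi)$-exponents $\sum_i r_2(k_i)=2^{n-1}=r_2(k)$ cancel and $\sum_i r_1(k_i)-r_1(k)=2^n-2$ as in the real case), the torsion identity $|\prod_i\mu_{k_i}|/\prod_i|\mu_{k_i}|=2^{2-2^n}$ (the $\QQ(i)$ and $\QQ(\sqrt{-3})$ contributions cancel between lcm and product exactly as you say), and the two regulator exponents all combine to give the stated $v$. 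Two places deserve slightly more care than your wording suggests. First, the claim that \emph{every} cofactor of the $\pm1$ character table $H$ of $(\ZZ/2\ZZ)^n$ has absolute value $2^{n(2^{n-1}-1)}$ does not follow merely from expanding $\det H$ along the all-ones row (that only controls an alternating sum of minors); it does follow cleanly from $H^2=2^nI$, whence the adjugate satisfies $\mathrm{adj}(H)=\det(H)H^{-1}=\pm2^{n(2^{n-1}-1)}H$, so each cofactor is $\pm2^{n(2^{n-1}-1)}$. Second, the decomposition $q(k)=\alpha\cdot[\bar E_k:\overline{\prod_iE_{k_i}}]$ with $\alpha=[\mu_k:\prod_i\mu_{k_i}]$ should be justified by the exact sequence relating torsion and free parts of the two unit groups (valid since the subgroup has full rank), and one should note that in the CM case the free part of $\prod_iE_{k_i}$ is generated exactly by the fundamental units of the $2^{n-1}-1$ real quadratic subfields, matching $\mathrm{rank}(E_k)=2^{n-1}-1$. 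With those two points made explicit the proof is complete.
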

	\bigskip
	To compute that unit index  $q(k)$ appearing in the above lemma, it is useful  to
	  recall the following  method given in    \cite{wada}, that describes a fundamental system  of units of a real  multiquadratic field $k_0$. Let  $\sigma_1$ and 
	$\sigma_2$ be two distinct elements of order $2$ of the Galois group of $k_0/\mathbb{Q}$. Let $k_1$, $k_2$ and $k_3$ be the three subextensions of $k_0$ invariant by  $\sigma_1$,
	$\sigma_2$ and $\sigma_3= \sigma_1\sigma_2$, respectively. Let $\varepsilon$ denote a unit of $k_0$. Then \label{algo wada}
	$$\varepsilon^2=\varepsilon\varepsilon^{\sigma_1}  \varepsilon\varepsilon^{\sigma_2}(\varepsilon^{\sigma_1}\varepsilon^{\sigma_2})^{-1},$$
	and we have, $\varepsilon\varepsilon^{\sigma_1}\in E_{k_1}$, $\varepsilon\varepsilon^{\sigma_2}\in E_{k_2}$  and $\varepsilon^{\sigma_1}\varepsilon^{\sigma_2}\in E_{k_3}$.
	It follows that the unit group of $k_0$  
	is generated by the elements of  $E_{k_1}$, $E_{k_2}$ and $E_{k_3}$, and the square roots of elements of   $E_{k_1}E_{k_2}E_{k_3}$ which are perfect squares in $k_0$.
	\bigskip

	 \begin{lemma}[\cite{Qinred}, Lemma 2.4]\label{AmbiguousClassNumberFormula} Let $k/k'$  be a QO-extension of number fields. Then the rank of the $2$-class group of $k$ is given by
		$$\rg({A(k)})=t_{k/k'}-1-e_{k/k'},$$
		where  $t_{k/k'}$ is the number of  ramified primes (finite or infinite) in the extension  $k/k'$ and $e_{k/k'}$ is  defined by   $2^{e_{k/k'}}=[E_{k'}:E_{k'} \cap N_{k/k'}(k^*)]$.
	\end{lemma}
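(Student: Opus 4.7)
The plan is to derive this rank formula from the classical ambiguous class number formula of Chevalley, specialized to a quadratic extension whose base has odd class number. Recall that for any cyclic extension $k/k'$ of prime degree $\ell$ with Galois group $G$, Chevalley's formula reads
\[
|\mathrm{Cl}(k)^G| \;=\; \frac{h(k')\,\prod_v e_v}{[k:k']\,[E_{k'}:E_{k'}\cap N_{k/k'}(k^*)]},
\]
where $v$ ranges over all primes of $k'$ (finite and infinite) and $e_v$ is the corresponding ramification index. This is exactly the input I would use.

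First I would specialize to $\ell=2$, where every ramified prime contributes a factor of $2$, so $\prod_v e_v = 2^{t_{k/k'}}$ with $t_{k/k'}$ as defined in the statement. Next, since $k/k'$ is a QO-extension, $h(k')$ is odd, so passing to $2$-parts (which commute with $G$-fixed points) yields
\[
|A(k)^G| \;=\; 2^{\,t_{k/k'}-1-e_{k/k'}}.
\]
This is the cardinality half of the argument; the rest of the work is to identify $A(k)^G$ with $A(k)[2]$.

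For this identification, I would use the norm map. Since $h(k')$ is odd, $A(k')=0$, so for every $c\in A(k)$ one has $N_{k/k'}(c)=c\cdot c^\sigma = 1$, where $\sigma$ generates $G$. Hence $c^\sigma=c^{-1}$ for every ideal class $c$, and consequently
\[
A(k)^G \;=\; \{c\in A(k):c^\sigma=c\} \;=\; \{c\in A(k):c=c^{-1}\} \;=\; A(k)[2].
\]
Combining this with the cardinality computation gives $2^{\rg(A(k))}=|A(k)[2]|=2^{t_{k/k'}-1-e_{k/k'}}$, which is the claimed formula.

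Nothing in this sketch is really a serious obstacle, since Chevalley's formula is entirely standard and the QO-hypothesis makes the two required simplifications (odd class number killing the $h(k')$ factor, and $A(k')=0$ forcing $\sigma$ to act as inversion on $A(k)$) completely transparent. The only step that requires a moment of care is justifying that taking $2$-parts commutes with taking $G$-fixed points, which I would handle by noting that $A(k)$ is a characteristic subgroup of $\mathrm{Cl}(k)$ and therefore $G$-stable, so $(\mathrm{Cl}(k)^G)_2 = A(k)^G$. The rest is just bookkeeping on the exponent.
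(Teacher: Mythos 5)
The paper does not prove this lemma at all: it is quoted verbatim as Lemma 2.4 of the cited reference of Yue, so there is no in-paper argument to compare against. Your derivation from Chevalley's ambiguous class number formula is correct and is in fact the standard route to this statement. The three ingredients are all sound: the specialization $\prod_v e_v=2^{t_{k/k'}}$ and $[k:k']=2$; the observation that $h(k')$ odd makes the $2$-part of $|\mathrm{Cl}(k)^G|$ equal to $2^{t_{k/k'}-1-e_{k/k'}}$ and that taking $2$-Sylow subgroups commutes with taking $G$-invariants for a finite abelian group; and the identification $A(k)^G=A(k)[2]$, which correctly exploits $A(k')=0$. The only imprecision worth flagging is notational: $N_{k/k'}(c)$ is a class in $\mathrm{Cl}(k')$, so the identity you want is $c\cdot c^{\sigma}=j_{k/k'}\bigl(N_{k/k'}(c)\bigr)$, where $j_{k/k'}$ is the map induced by extension of ideals; since $N_{k/k'}(c)$ lies in $A(k')=0$, one still gets $c^{\sigma}=c^{-1}$, so the conclusion stands. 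With that cosmetic fix your argument is a complete and self-contained proof of the lemma, arguably more informative than the paper's bare citation.
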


	The following lemma is a particular case of Fukuda's Theorem  \cite{fukuda}.

	\begin{lemma}[\cite{fukuda}]\label{lm fukuda}
		Let $k_\infty/k$ be a $\mathbb{Z}_2$-extension and $n_0$  an integer such that any prime of $k_\infty$ which is ramified in $k_\infty/k$ is totally ramified in $k_\infty/k_{n_0}$.
		\begin{enumerate}[\rm $1)$]
			\item If there exists an integer $n\geq n_0$ such that   $h_2(k_n)=h_2(k_{n+1})$, then $h_2(k_n)=h_2(k_{m})$ for all $m\geq n$.
			\item If there exists an integer $n\geq n_0$ such that $\rg( A(k_n))= \rg(A(k_{n+1}))$, then
			$\rg(A(k_{m}))= \rg(A(k_{n}))$ for all $m\geq n$.
		\end{enumerate}
	\end{lemma}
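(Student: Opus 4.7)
The proof rests on a Nakayama-type argument applied to the Iwasawa module $X:=\varprojlim_m A(k_m)$. First, under the total-ramification hypothesis, class field theory gives a canonical identification
\[
A(k_m)\;\cong\; X/\omega_{m-n_0}X,\qquad \omega_j:=(1+T)^{2^j}-1,\qquad m\ge n_0,
\]
where $T=\gamma-1$ for a chosen topological generator $\gamma$ of $\mathrm{Gal}(k_\infty/k_{n_0})$, and the norm maps along the tower correspond to the obvious projections. The Iwasawa algebra $\Lambda=\mathbb{Z}_2[[T]]$ is a Noetherian local ring with maximal ideal $\mathfrak{m}=(2,T)$, and $X$ is finitely generated over it.

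For part (1), assume $h_2(k_n)=h_2(k_{n+1})$ with $n\ge n_0$. Then the surjection $X/\omega_{n+1-n_0}X\twoheadrightarrow X/\omega_{n-n_0}X$ is an isomorphism of finite groups of equal order, so $\omega_{n-n_0}X=\omega_{n+1-n_0}X$. Using the factorization $\omega_{j+1}=\omega_j(\omega_j+2)$, the submodule $Y:=\omega_{n-n_0}X$ satisfies $Y=(\omega_{n-n_0}+2)Y\subseteq \mathfrak{m}Y$, since both $\omega_{n-n_0}$ and $2$ lie in $\mathfrak{m}$. Nakayama's lemma forces $Y=0$, hence $\omega_{m-n_0}X=0$ for every $m\ge n$, i.e.\ $A(k_m)\cong X\cong A(k_n)$.

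For part (2), I would reduce modulo $2$. Set $\overline{X}=X/2X$, viewed over $\overline{\Lambda}=\mathbb{F}_2[[T]]$, in which $\overline{\omega}_j=T^{2^j}$. Then $\mathrm{rank}_2 A(k_m)=\dim_{\mathbb{F}_2}\overline{X}/\overline{\omega}_{m-n_0}\overline{X}$, and the hypothesis yields $\overline{\omega}_{n-n_0}\overline{X}=\overline{\omega}_{n+1-n_0}\overline{X}=\overline{\omega}_{n-n_0}^{\,2}\overline{X}$. Setting $\overline{Y}:=\overline{\omega}_{n-n_0}\overline{X}$, one gets $\overline{Y}=T^{2^{n-n_0}}\overline{Y}\subseteq (T)\overline{Y}$, and Nakayama over $\mathbb{F}_2[[T]]$ forces $\overline{Y}=0$. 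Hence $\omega_{m-n_0}X\subseteq 2X$ for $m\ge n$, so $A(k_m)/2A(k_m)\cong\overline{X}$ and $\mathrm{rank}_2 A(k_m)=\dim_{\mathbb{F}_2}\overline{X}=\mathrm{rank}_2 A(k_n)$.

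The main obstacle is the initial structural identification $A(k_m)\cong X/\omega_{m-n_0}X$: this is where the total-ramification hypothesis is genuinely used. Setting it up requires interpreting $X=\mathrm{Gal}(L_\infty/k_\infty)$ (with $L_\infty$ the maximal unramified abelian $2$-extension of $k_\infty$) and showing that the inertia subgroups in $\mathrm{Gal}(L_\infty/k_{n_0})$ of the primes ramifying in $k_\infty/k_{n_0}$ project onto $\mathrm{Gal}(k_\infty/k_{n_0})$, so that the kernel of the quotient map realizing $A(k_m)$ is exactly $\omega_{m-n_0}X$. Once this standard piece of Iwasawa theory is in place, the two short Nakayama computations above complete the proof of both parts.
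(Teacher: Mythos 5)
The paper itself gives no proof of this lemma --- it is quoted verbatim from Fukuda's paper --- and the Nakayama strategy you use is precisely Fukuda's, so your overall approach is the right one. There is, however, a genuine error in the structural identification on which everything rests: the isomorphism $A(k_m)\cong X/\omega_{m-n_0}X$ is valid only when a \emph{single} prime of $k_{n_0}$ ramifies in $k_\infty/k_{n_0}$. If $s\geq 2$ primes ramify (as happens for many fields in this paper, where two primes of $K$ can lie above $2$), the correct statement (Washington, Lemma 13.18, applied with base field $k_{n_0}$) is $A(k_m)\cong X/Y_m$ with $Y_m=\nu_{m,n_0}Y_{n_0}$, where $Y_{n_0}=T'X+\langle a_2,\dots,a_s\rangle_{\Lambda}$ also contains the elements $a_i$ recording the discrepancies between generators of the several inertia subgroups; your $\omega_{m-n_0}X$ is only the first summand. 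Total ramification does give that each inertia group surjects onto $\mathrm{Gal}(k_\infty/k_{n_0})$, but the conclusion that the kernel is ``exactly $\omega_{m-n_0}X$'' does not follow and is false in general.

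The error is confined to that identification: the rest of your argument survives verbatim once $\omega_{m-n_0}X$ is replaced by $Y_m$. The key relation is $Y_{m+1}=\nu_{m+1,m}Y_m$ with $\nu_{m+1,m}=\omega_{m-n_0}+2\in\mathfrak{m}=(2,T)$ --- your factorization $\omega_{j+1}=\omega_j(\omega_j+2)$ computes exactly this element --- so in part (1) the equality $Y_n=Y_{n+1}$ forces $Y_n=\nu_{n+1,n}Y_n\subseteq\mathfrak{m}Y_n$, Nakayama kills $Y_n$, and then $Y_m=\nu_{m,n}Y_n=0$ for all $m\geq n$; in part (2) the same computation modulo $2$, where $\overline{\nu}_{n+1,n}=T^{2^{n-n_0}}$ lies in $(T)$, gives $\overline{Y_n}=0$ in $X/2X$ and hence $Y_m+2X=2X$ for all $m\geq n$. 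So the proof is repairable by a one-line substitution, but as written the central displayed isomorphism, and hence the step deducing $\omega_{n-n_0}X=\omega_{n+1-n_0}X$ from the equality of class numbers, is incorrect whenever more than one prime ramifies.
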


	Let us state the following useful remark. For more about the context of this remark we refer the reader to \cite[p. 160]{lemmermeyer2013reciprocity}.
	Let $\varepsilon_d$ denote the fundamental unit of a real quadratic fields $\QQ(\sqrt{d})$.
	
	\begin{remark}\label{scholsrec}
		Let $p=2$ or $p\equiv 1\pmod 4$ be a prime number and let  $r$ be  any odd prime  number  such that $ \left(\frac{p}{ r}\right)=1$. Let $\mathcal R_{\mathbb{Q}(\sqrt{p})}$ and $\mathcal R_{\mathbb{Q}(\sqrt{p})}'$ be the two primes of $\mathbb{Q}(\sqrt{p})$ above $r$.
		We choose $\mathcal R_{\mathbb{Q}(\sqrt{p})}$ and $\mathcal R_{\mathbb{Q}(\sqrt{p})}'$ such that :
		\begin{enumerate}[$1)$]
			\item  $ \left(\frac{\varepsilon_p}{\mathcal R_{\mathbb{Q}(\sqrt{p})}}\right)=\left(\frac{\varepsilon_p}{\mathcal R_{\mathbb{Q}(\sqrt{p})}'}\right)$ if ($p\equiv 1\pmod 4$ and $r\equiv 1\pmod 4$) or ($p=2$ and $r\equiv 1\pmod 8$). In this case  $ \left(\frac{\varepsilon_p}{\mathcal R_{\mathbb{Q}(\sqrt{p})}}\right) = \left(\frac{p}{r}\right)_4\left(\frac{r}{p}\right)_4$ and this is called Scholz reciprocity law (cf. \cite[p. 160]{lemmermeyer2013reciprocity}).

			\item 	$ \left(\frac{\varepsilon_p}{\mathcal R_{\mathbb{Q}(\sqrt{p})}}\right)=-\left(\frac{\varepsilon_p}{\mathcal R_{\mathbb{Q}(\sqrt{p})}'}\right)=1$  if ($p\equiv 1\pmod 4$ and $r\equiv 3\pmod 4$) or ($p=2$ and $r\equiv 7\pmod 8$).  
		\end{enumerate}
	\end{remark}

\subsection{Hilbert $2$-class field tower of  fields $k$ such that  $A(k)\simeq\ZZ/2 \ZZ\times\ZZ/2^n \ZZ$ }	
In this subsection, we review some results from class field theory that will be very useful for what follows.  Let $k$ be  a number field.    
 Let  $k^{(1)}$  be the Hilbert $2$-class field of $k$, that is  the maximal unramified  abelian field extension
of $k$ whose degree over $k$ is a $2$-power. Put $k^{(0)} = k$ and let $k^{(i)}$ denote the Hilbert $2$-class field of $k^{(i-1)}$
for any integer $i\geq 1$. Then the sequence of fields
$$k=k^{(0)} \subset k^{(1)} \subset  k^{(2)}  \subset \cdots\subset k^{(i)} \cdots \subset  \bigcup_{i\geq 0} k^{(i)}=\mathcal{L}(k)$$
is called   the $2$-class field tower of $k$. If for all $i\geq1$,  $k^{(i)}\neq k^{(i-1)}$, the tower is said to be infinite, otherwise the tower is said to be  finite, and the minimal integer $i$ satisfying the condition $k^{(i)}= k^{(i-1)}$ is called the length of the tower. The field $\mathcal{L}(k)$ is called the maximal unramified pro-$2$-extension  of $k$ and for $k_\infty $, the cyclotomic $\ZZ_2$-extension of $k$, the group $G_{k_\infty}=\mathrm{Gal}(\mathcal{L}(k_\infty)/k_\infty)$ is isomorphic to the inverse limit $\varprojlim \mathrm{Gal}(\mathcal{L}(k_n)/k_n)$ with respect to the restriction map.

\bigskip

Now let $k$ be a number field such that $A(k)\simeq\ZZ/2 \ZZ\times\ZZ/2^n \ZZ$, where $n\geq2$ is a natural number.
Put $G_k=\mathrm{Gal}(\mathcal{L}(k)/k)$. So by class field theory, we have 
  $G_k/G_k'\simeq\ZZ/2 \ZZ\times\ZZ/2^n \ZZ$ and  $G_k=\langle a,
	b\rangle$ such that $a^2\equiv b^{2^n}\equiv 1\mod G_k'$ and $A(k)=\langle \mathfrak{c}, \mathfrak{d}\rangle\simeq \langle aG_k',
	bG'\rangle$ where $(\mathfrak{c}, k^{(2)}/k)=aG_k'$
	and $(\mathfrak{d}, k^{(2)}/k)=bG_k'$ with $(\ .\ ,
	k^{(2)}/k)$ is the Artin symbol in the extension
	$k^{(2)}/k$. Therefore, there exist three normal subgroup of $G_k$ of index $2$ denoted  $H_{1, 2}$, $H_{2, 2}$ and
	$H_{3, 2}$ such that
	\begin{center}
		$H_{1, 2}=\langle b, G_k'\rangle$, $H_{2, 2}=\langle ab, G_k'\rangle$ and
		$H_{3, 2}=\langle a, b^2, G_k'\rangle.$
	\end{center}
 Furthermore, there exist three normal subgroups of $G_k$ of index
	$4$  denoted  $H_{1, 4}$,
	$H_{2, 4}$ and $H_{3, 4}$ such that
 	$$H_{1, 4}=\langle a,b^4, G_k'\rangle, \quad H_{2, 4}=\langle ab^2, G_k'\rangle \text{ and } H_{3, 4}=\langle b^2, G_k'\rangle.$$
  Each subgroup $H_{i,j}$ of $A(k)$
corresponds to 	an unramified extension    ${K}_{i,j}$ of $k$ contained in
	$k^{(1)}$ such that $A(k)/H_{i,j}\simeq
	\operatorname{Gal}(K_{i,j}/k)$ and
	$H_{i,j}=\mathcal{N}_{{K}_{i,j}/k}(A(K_{i,j}) )$. The Hilbert $2$-class field tower of $k$ can be schematized as follows:
	\begin{figure}[H]
		$$
		\xymatrix{
			& k^{(2)} \ar@{<-}[d] & \\
			& k^{(1)}\ar@{<-}[ld]\ar@{<-}[d]\ar@{<-}[rd]\\
			{K_{1,  4}}\ar@{<-}[rd]&\ar@{<-}[ld] {K_{3,  4}}\ar@{<-}[d]\ar@{<-}[rd]  & {K_{2,  4}}\ar@{<-}[ld]\\
			{K_{1,  2}}\ar@{<-}[rd]& {K_{3,  2}} \ar@{<-}[d]  & {K_{2,  2}}\ar@{<-}[ld]\\
			&k
		}
		$$
		\caption{\label{Fig2}}
	\end{figure}
	
For more details, we refer the reader to \cite{aaboune}.	
With the above hypothesis and notations we have:

		\begin{theorem}[\cite{aaboune}, Theorem 4.10]\label{AabounePrzekhini}
		The following assertions are equivalent:
		\begin{enumerate}[\rm $1)$]
			\item $G_k$ is abelian,
			\item The  $2$-class group of ${K_{3,  2}}$ is isomorphic to  $\ZZ/2 \ZZ\times\ZZ/2^{n-1} \ZZ$,
			\item The  $2$-class number of ${K_{3,  2}}$ equals $2^n$,
			\item The $2$-class group of  ${K_{i, 2}}$ is cyclic of order $2^n$ with $i=1$ or $2$,
			\item The  $2$-class number of ${K_{i,  2}}$ equals $2^n$ with $i=1$ or $2$,
			\item The Hilbert $2$-class field tower of $k$ stops at $k^{(1)}$.
		\end{enumerate}
	\end{theorem}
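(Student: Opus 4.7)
The plan is to translate each of the six conditions into a property of the finite $2$-group $G_k = \mathrm{Gal}(\mathcal{L}(k)/k)$ via class field theory, then verify the equivalences group-theoretically. Since each $K_{i,j}/k$ is unramified, one has $\mathcal{L}(K_{i,j}) = \mathcal{L}(k)$, and therefore $A(K_{i,j}) \simeq H_{i,j}^{ab} := H_{i,j}/[H_{i,j},H_{i,j}]$; in particular the $2$-class number of $K_{i,j}$ equals $|H_{i,j}^{ab}|$. Under this dictionary, (1) $\Leftrightarrow$ (6) is immediate, since the tower stops at $k^{(1)}$ iff $\mathcal{L}(k) = k^{(1)}$ iff $G_k' = 1$. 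For (1) $\Rightarrow$ (2)--(5), one computes directly in the abelian group $G_k \simeq \ZZ/2\times \ZZ/2^n$: the subgroups $H_{1,2} = \langle b\rangle$ and $H_{2,2} = \langle ab\rangle$ are cyclic of order $2^n$ (for $n \geq 2$, $(ab)^{2^{n-1}} = b^{2^{n-1}} \neq 1$), while $H_{3,2} = \langle a, b^2\rangle \simeq \ZZ/2\times \ZZ/2^{n-1}$. The implications (2) $\Rightarrow$ (3) and (4) $\Rightarrow$ (5) are trivial.

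The crux is the converse direction (3) $\Rightarrow$ (1) and (5) $\Rightarrow$ (1). Since $|H_{i,2}| = |G_k|/2 = 2^n |G_k'|$ and $H_{i,2}' \subseteq G_k' \cap H_{i,2} = G_k'$, the hypothesized class-number condition translates exactly to $H_{i,2}' = G_k'$ (for $i=3$ under (3), and for $i\in\{1,2\}$ under (5)). It then suffices to establish the purely group-theoretic claim: if $G$ is a finite $2$-group with $G^{ab} \simeq \ZZ/2\times\ZZ/2^n$, generated by lifts $a, b$ of the canonical generators, and if $H_{i,2}' = G'$ for some $i\in\{1,2,3\}$, then $G' = 1$. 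I would first reduce to the metabelian case by passing to $\bar G := G/G''$: since $G' \subseteq H_{i,2}$ implies $G'' = [G',G'] \subseteq [H_{i,2},H_{i,2}] = H_{i,2}'$, the equality $H_{i,2}' = G'$ descends to $(\bar H_{i,2})' = \bar G'$, while $\bar G^{ab} = G^{ab}$. In the metabelian quotient, $M := \bar G'$ becomes an abelian group with a natural $\ZZ_2[\bar G/\bar G']$-module structure, generated by $c = [a,b]$. Using the commutator identities $[a, b^2] = c(1+\bar b)$ and $[x, m] = m(\bar x - 1)$ for $m\in M$, one exhibits explicit $\ZZ_2[H_{i,2}/G']$-module generators of $\bar H_{i,2}'$ inside $M$; combining with the module relations forced by $a^2, b^{2^n} \in G'$ (which impose that $\bar a$, resp.\ $\bar b$, acts trivially on these specific elements of $M$), the equality $\bar H_{i,2}' = M$ forces $c = 0$, whence $M = 0$, i.e.\ $G' = G''$. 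Since a nontrivial finite $2$-group cannot equal its derived subgroup, this yields $G' = 1$.

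The main obstacle is the final metabelian module calculation, and in particular ensuring that the relations $a^2, b^{2^n} \in G'$ are sufficient to conclude $c = 0$ from the hypothesis $\bar H_{i,2}' = M$. The three cases $i = 1, 2, 3$ must be treated separately, since $H_{i,2}/G'$ has different shapes (cyclic of order $2^n$ for $i = 1, 2$, and $\ZZ/2\times \ZZ/2^{n-1}$ for $i = 3$), leading to different sets of commutator generators of $\bar H_{i,2}'$ inside $M$. Careful bookkeeping of the $\ZZ_2[G/G']$-module structure of $M$, together with the annihilators of $c$ arising from $a^2$ and $b^{2^n}$, is the technical heart of the argument.
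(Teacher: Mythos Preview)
The paper does not prove this theorem; it is quoted verbatim from \cite{aaboune} (Theorem~4.10 there) and used as a black box. So there is no ``paper's proof'' to compare against.

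Your strategy is sound and would succeed. The translation to group theory, the reduction to the metabelian quotient, and the recognition that the crux is the implication $H_{i,2}'=G'\Rightarrow G'=1$ for a finite $2$-group with $G^{ab}\simeq\ZZ/2\times\ZZ/2^n$ are all correct. One refinement: the final step is cleaner than you suggest, and does not really hinge on the specific relations coming from $a^2,b^{2^n}\in G'$. In the metabelian group $\bar G$, the module $M=\bar G'$ is a finitely generated module over the \emph{local} ring $R=\ZZ_2[G^{ab}]$, whose maximal ideal $\mathfrak m$ is the augmentation ideal together with $2$. For $i=1,2$ one finds $\bar H_{i,2}'=(\beta-1)M$ with $\beta\in\{b,ab\}$, and $\beta-1\in\mathfrak m$; for $i=3$ one finds $\bar H_{3,2}'$ is generated over $\ZZ_2[\langle a,b^2\rangle]$ by $(1+b)c$ together with $(a-1)M+(b^2-1)M$, and all of $(1+b)=2+(b-1)$, $(a-1)$, $(b^2-1)$ lie in $\mathfrak m$. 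Hence in every case $\bar H_{i,2}'\subseteq\mathfrak m M$, so $\bar H_{i,2}'=M$ forces $M=\mathfrak m M$, and Nakayama's lemma gives $M=0$. Your outline is therefore correct, but the invocation of Nakayama makes the ``careful bookkeeping'' you anticipate unnecessary.
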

	
Furthermore, we have:
	
	 \begin{proposition}[\cite{BLS98}, Proposition 7]\label{LemBenjShn}
		Let $k$ be a number field such that $A(k)\simeq (2^m, 2^n)$ for some positive integers $m$ and $n$. If there is an unramified
		quadratic extension of $k$  with $2$-class number $2^{m+n-1}$,  then all three unramified quadratic extensions of $k$ have $2$-class number
		$2^{m+n-1}$,  and the $2$-class field tower of $k$ terminates at $k^{(1)}$.
		
		Conversely, if the $2$-class field tower of $k$ terminates at $k^{(1)}$, then all three
		unramified quadratic extensions of $k$ have $2$-class number $2^{m+n-1}$.
	\end{proposition}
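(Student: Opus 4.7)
The plan is to translate the problem into the pro-$2$ group $G=G_k=\mathrm{Gal}(\mathcal{L}(k)/k)$, whose abelianisation $G/G'$ is identified with $A(k)\cong\ZZ/2^m\ZZ\oplus\ZZ/2^n\ZZ$ via Artin reciprocity. Under this identification, the three unramified quadratic extensions $K_1,K_2,K_3$ correspond to the three open index-$2$ subgroups $N_1,N_2,N_3$ of $G$, each of which contains $G'$. Since $\mathcal{L}(K_i)=\mathcal{L}(k)$, the Artin map for $K_i$ gives $A(K_i)=N_i/[N_i,N_i]$, so the entire argument reduces to bounding and comparing these abelianisations.

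The first observation is a uniform lower bound: because $[N_i,N_i]\subset G'\subset N_i$, the surjection $N_i/[N_i,N_i]\twoheadrightarrow N_i/G'$ has target of order $[N_i:G']=2^{m+n-1}$, giving $h_2(K_i)\ge 2^{m+n-1}$, with equality exactly when $[N_i,N_i]=G'$, equivalently when $K_i^{(1)}=k^{(1)}$. The forward direction of the proposition follows at once: if the $2$-class field tower of $k$ terminates at $k^{(1)}$, then $\mathcal{L}(k)=k^{(1)}$, so $G=G^{\mathrm{ab}}$ is abelian and $[N_i,N_i]=G'=1$, forcing $h_2(K_i)=2^{m+n-1}$ for every $i$.

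For the converse, assume $h_2(K_1)=2^{m+n-1}$, i.e.\ $[N_1,N_1]=G'$; the goal is to show $G$ is abelian. Choose lifts $a,b\in G$ of a minimal generating set of $G^{\mathrm{ab}}$; by the Burnside basis theorem $G=\overline{\langle a,b\rangle}$ and $G'$ is the closed normal subgroup generated by $[a,b]$. Up to relabelling of the three types of $N_i$, I may assume $N_1=\overline{\langle a,b^{2},G'\rangle}$. In the metabelian quotient $G/G''$ the commutator bracket becomes bi-additive on $G/G'$-classes, so for any $g=a^{i}b^{2j}\gamma$ and $h=a^{i'}b^{2j'}\gamma'$ in $N_1$ (with $\gamma,\gamma'\in G'$) one computes
\[
[g,h]\equiv[a,b]^{2(ij'-i'j)}\pmod{G''},
\]
whence $[N_1,N_1]\cdot G''/G''=\langle[a,b]^{2}\rangle\cdot G''/G''$. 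The hypothesis $[N_1,N_1]=G'$ therefore forces $[a,b]\equiv[a,b]^{2k}\pmod{G''}$ for some $k\in\ZZ_2$; because $1-2k$ is a unit in $\ZZ_2$ this yields $[a,b]\in G''$, and taking closed normal closures gives $G'=G''$.

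To finish, $G^{\mathrm{ab}}$ is finite, so $G$ is a finitely generated pro-$2$ group, and hence so is its finite-index closed subgroup $G'$; the identity $G'=[G',G']$ combined with the Burnside basis theorem for pro-$2$ groups (equivalently the topological Nakayama lemma) forces $G'=1$. Thus $G$ is abelian, the $2$-class field tower of $k$ terminates at $k^{(1)}$, and the forward direction then supplies the remaining equalities $h_2(K_2)=h_2(K_3)=2^{m+n-1}$. The main technical obstacle is the metabelian computation of $[N_1,N_1]\bmod G''$ and the subsequent pro-$2$ argument deducing $G'=1$ from $G'=G''$; the rest is a routine application of class field theory and the elementary index count.
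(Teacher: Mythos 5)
The paper does not prove this proposition: it is cited directly from Benjamin--Lemmermeyer--Snyder \cite{BLS98}, so there is no in-paper argument to compare yours with. Your overall strategy is the standard one and most of it is correct: the identification $A(K_i)\cong N_i/[N_i,N_i]$ via $\mathcal{L}(K_i)=\mathcal{L}(k)$, the index count giving $h_2(K_i)\geq 2^{m+n-1}$ with equality if and only if $[N_i,N_i]=G'$, the immediate proof of the ``tower abelian $\Rightarrow$ all three class numbers equal $2^{m+n-1}$'' direction, and the concluding Frattini/Nakayama step are all sound.

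There is, however, a genuine error in the central computation. The commutator induces a bilinear pairing on $G/G'$ with values in $G'/[G,G']$, \emph{not} in $G'/G''$. Concretely, $[a,b^2]=[a,b]^2\,[[a,b],b]$, and $[[a,b],b]$ lies in $[G,G']$ but in general not in $G''=[G',G']$; likewise $[\gamma,h]$ with $\gamma\in G'\subset N_1$ lies in $[G,G']$ and need not lie in $G''$, since conjugation by $G$ acts nontrivially on $G'/G''$. So the displayed congruence $[g,h]\equiv[a,b]^{2(ij'-i'j)}\pmod{G''}$ is false, the asserted equality $[N_1,N_1]G''/G''=\langle[a,b]^2\rangle G''/G''$ is unjustified, and the conclusion $[a,b]\in G''$ does not follow. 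The repair is to replace $G''$ by $[G,G']$ throughout: in $\Gamma=G/[G,G']$ the subgroup $\Gamma'$ is central and procyclic, topologically generated by the image of $[a,b]$, and the hypothesis $[N_1,N_1]=G'$ yields $\langle\overline{[a,b]}\rangle=\langle\overline{[a,b]}^{\,2}\rangle$, which in a procyclic pro-$2$ group forces $\overline{[a,b]}=1$, i.e.\ $G'=[G,G']$. Since every finite quotient of the pro-$2$ group $G$ is a finite $2$-group and hence nilpotent, the equality $\gamma_2(G)=\gamma_3(G)$ already forces $G'=1$ (equivalently, apply Nakayama to the augmentation action on $G'/\Phi(G')$). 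With this substitution the remainder of your argument goes through unchanged.
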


	\section{\bf The proof of the main theorem}\label{sec3}

 Let $\mathbb P$ denote the set of prime numbers and put $\mathbb P_{m}:=\{p\in \mathbb P: p\equiv 1\pmod 8\text{ and } \left(\frac{m}{p}\right)=1\}$. Let us start with the following lemmas concerning the rank of the $2$-class group of real biquadratic fields.
 \begin{lemma}\label{lemmaqA} Let $K=\QQ(\sqrt{q},\sqrt{d})$ where $q\equiv 3\pmod 4$  is a prime number and $d > 1$ is an odd    square-free integer that is not divisible by $q$. Then $\rg(A(K))\leq 2$ if and only if $K$ takes one of the following forms:
 	\begin{enumerate}[$1)$]
 		\item $K=\QQ(\sqrt{q},\sqrt{d})$, where   $d=r$ is a prime number. {\bf In this case,} $\rg(A(K))\in \{0,1\}$, more precisely, $\rg(A(K))=1$ if and only if 
 		$ \left(\frac{q}{r}\right)=1$	and $r\equiv 1\pmod 8$.
 		
 		\item   $K=\QQ(\sqrt{q},\sqrt{d})$, where    $d=rs $ with $r$ and $s$ are two primes number such that $(r,s)\not\in   \mathbb P_{q}^2$. {\bf In this case,} $\rg(A(K))\in \{1,2\}$. More precisely, $\rg(A(K))=1$ if and only if, after a suitable permutation, $r$ and $s$ satisfy one of the following conditions:
 		\begin{enumerate}[\indent$	 \C1: $]
 			\item   $ \left(\frac{q}{r}\right)= \left(\frac{q}{s}\right)=-1$,
 			\item  $ \left(\frac{q}{r}\right)= -\left(\frac{q}{s}\right)=-1$ and $s\not\equiv 1\pmod 8$,
 			\item  $ \left(\frac{q}{r}\right)=  \left(\frac{q}{s}\right)= 1$ and  $[( r\equiv  7\pmod 8$ and $s\equiv 3$ or $5\pmod 8 )$ or $( r\equiv  3\pmod 8$ and $s\equiv 5\pmod 8 )]$.
 		\end{enumerate}
 		\item   $K=\QQ(\sqrt{q},\sqrt{d})$, where    $d=rst $ with $r$, $s$ and $t$ are   primes numbers satisfying one of the following conditions:
 		\begin{enumerate}[\indent$	 \C1: $]

 			\item   $\left(\frac{q}{r}\right)=\left(\frac{q}{s}\right)=-\left(\frac{q}{t}\right)=1$,   $\left(\frac{-1}{r}\right)=\left(\frac{2}{r}\right)=-1$ and 
 			$\left(\frac{-1}{s}\right)\not=\left(\frac{2}{s}\right) $, 
 			\item    $\left(\frac{q}{r}\right)=-\left(\frac{q}{s}\right)=-\left(\frac{q}{t}\right)=1$ and  $[  \left(\frac{-1}{r}\right)=\left(\frac{2}{r}\right)=-1 $ or   $ \left(\frac{-1}{r}\right)\not=\left(\frac{2}{r}\right) ]$,
 			
 		 \item $\left(\frac{q}{r}\right)=\left(\frac{q}{s}\right)=\left(\frac{q}{t}\right)=-1$.   
 		\end{enumerate}
 	{\bf In this case,} $\rg(A(K))=2$.
 	 	\end{enumerate}	
 \end{lemma}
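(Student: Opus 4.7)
The plan is to apply Qin's ambiguous class number formula (Lemma \ref{AmbiguousClassNumberFormula}) to the QO-extension $K/K'$ with $K' = \QQ(\sqrt{q})$, which has odd class number by Lemma \ref{realQuad} since $q \equiv 3 \pmod 4$. Writing $\rg(A(K)) = t_{K/K'} - 1 - e_{K/K'}$, the bound $\rg(A(K)) \leq 2$ becomes $t_{K/K'} - e_{K/K'} \leq 3$, and both quantities have to be computed as a function of the prime factorisation of $d$. For $t_{K/K'}$, a conductor--discriminant calculation on the three quadratic subfields $\QQ(\sqrt{q})$, $\QQ(\sqrt{d})$, $\QQ(\sqrt{qd})$ yields $v_2(\mathrm{disc}(K/\QQ)) = 4$ regardless of whether $d \equiv 1$ or $d \equiv 3 \pmod 4$, and comparison with $v_2(\mathrm{disc}(K'/\QQ)) = 2$ forces the unique prime of $K'$ above $2$ to be unramified in $K/K'$. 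All archimedean places split because $K$ is totally real, so the ramified primes of $K/K'$ are precisely those above the odd prime divisors of $d$: each $p \mid d$ with $\left(\frac{q}{p}\right) = -1$ is inert in $K'$ and contributes $1$, while each with $\left(\frac{q}{p}\right) = 1$ splits and contributes $2$.

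Next I would determine $e_{K/K'}$. Since $q \equiv 3 \pmod 4$ gives $N(\varepsilon_q) = +1$, the group $E_{K'}/E_{K'}^2 = \langle -1, \varepsilon_q \rangle$ has order $4$, so $e_{K/K'} \in \{0, 1, 2\}$. By Hasse's norm theorem a unit is a global norm from $K$ if and only if it is a local norm at every ramified prime, which is encoded by the Hilbert symbols $(-1, d)_\mathfrak{p}$ and $(\varepsilon_q, d)_\mathfrak{p}$ at the primes $\mathfrak{p}$ of $K'$ above the odd prime divisors of $d$. These reduce to Legendre symbols $\left(\frac{-1}{p}\right)$, $\left(\frac{2}{p}\right)$ and $\left(\frac{q}{p}\right)$, together with biquadratic residue symbols of the type appearing in Scholz's reciprocity law (Remark \ref{scholsrec}) whenever some $p \mid d$ with $p \equiv 1 \pmod 8$ splits in $K'$.

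With $t$ and $e$ in hand I would perform an analysis on $\omega(d) \in \{1, 2, 3\}$. For $d = r$ prime, $t \leq 2$ forces $\rg(A(K)) \leq 1$, with equality precisely when $r$ splits in $K'$ and both local norm conditions at the primes above $r$ are satisfied, which by the reciprocity calculation is equivalent to $r \equiv 1 \pmod 8$, i.e.\ $r \in \mathbb P_q$. For $d = rs$ one has $t \in \{2, 3, 4\}$, and the only configuration violating $\rg \leq 2$ is $(r, s) \in \mathbb P_q^2$ (which forces $t = 4$ and $e = 0$); the sub-cases (C1)--(C3) for $\rg = 1$ are the Hilbert-symbol conditions that drop $e$ to the correct value in each of the three splitting patterns of $(r, s)$. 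For $d = rst$, the equality $\rg(A(K)) = 2$ is equivalent to $t - e_{K/K'} = 3$, which is exactly what the conditions on the Legendre symbols $\left(\frac{q}{\cdot}\right)$, $\left(\frac{-1}{\cdot}\right)$ and $\left(\frac{2}{\cdot}\right)$ of the primes $r, s, t$ in (C1)--(C3) are designed to impose. A short monotonicity argument in $t$ against the fixed upper bound $e \leq 2$ then rules out $\omega(d) \geq 4$.

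The main obstacle will be the book-keeping for $d = rst$: with three prime divisors and several possible splitting patterns in $K'$, the system of Hilbert-symbol conditions is large, and separating the configurations with $\rg(A(K)) = 2$ (kept on the list) from those with $\rg(A(K)) \geq 3$ (excluded) requires careful handling of biquadratic residue symbols whenever a prime factor of $d$ is congruent to $1 \pmod 8$, together with a systematic use of Scholz-type computations to pin down the exact parity of $e_{K/K'}$ in each branch.
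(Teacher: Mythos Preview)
Your approach is essentially the same as the paper's: apply the ambiguous class number formula to the QO-extension $K/\QQ(\sqrt{q})$ and analyse $t$ and $e$. The paper does not compute $e$ from scratch, however; it simply cites \cite[Theorem 3.3]{Azmouh2-rank} for the value of $e_{K/F}$ in each splitting pattern and then reads off $\rg(A(K))$. Two points in your plan deserve comment.

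\medskip
\textbf{The Scholz/biquadratic worry is misplaced.} For $q\equiv 3\pmod 4$ one has $2\varepsilon_q=(x+y\sqrt{q})^2$ in $K'=\QQ(\sqrt{q})$ for suitable integers $x,y$ (this is exactly the identity used in the proof of Lemma~\ref{realTruquad2-rankq}). Hence $\varepsilon_q\equiv 2$ in $K'^{\times}/(K'^{\times})^2$, and every Hilbert symbol $(\varepsilon_q,d)_\mathfrak{p}$ equals $(2,d)_\mathfrak{p}$. This is why only the ordinary Legendre symbols $\left(\frac{-1}{p}\right)$ and $\left(\frac{2}{p}\right)$ appear in the statement and no biquadratic residues are needed, even when some $p\mid d$ is $\equiv 1\pmod 8$ and splits in $K'$. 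With this observation your ``main obstacle'' disappears.

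\medskip
\textbf{The monotonicity step for $\omega(d)\geq 4$ is a genuine gap.} The inequality $e\leq 2$ together with $t\geq\omega(d)$ only gives $\rg\geq\omega(d)-3$, which for $\omega(d)=4$ yields $\rg\geq 1$, not $\rg\geq 3$. The paper in fact lists the forms $d=rstk$ (three inert, one arbitrary) and $d=rstke$ (all five inert) among the candidates with $t\leq 5$ and then checks each: when every prime of $d$ is inert in $K'$, one has $e=0$ (units in the unramified quadratic extension of $\QQ_p$ are automatically local norms in a tamely ramified quadratic extension, since every element of $\FF_p^\times$ is a square in $\FF_{p^2}$), so $\rg=\omega(d)-1\geq 3$; when exactly one prime splits, the two local obstructions above it coincide and $e\leq 1$, so again $\rg\geq 3$. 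You need these computations (or the citation) to close the argument; ``monotonicity'' alone does not suffice.
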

 \begin{proof}
 	Assume that 	$K=\QQ(\sqrt{q},\sqrt{d})$ where $d > 1$ is an odd  positive  square-free integer that is not divisible by $q$.
 	Let $t$ be the number of prime ideals of $ F=\QQ(\sqrt{q})$ that are ramified in $K$. Thus,  $ \rg(A(K))=t_{K/F}-1-e_{K/F}$. 
 	As  $e_{K/F}\in\{0,1,2\}$, then the inequality $\rg(A(K))\leq 2$ implies that $t_{K/F}\leq 5$. The last inequality is equivalent to the fact that  $d$ takes one of the following five forms :
 	\begin{eqnarray*}
 		&r,&\quad rs, \quad rst  \text{ with }  \left(\frac{q}{t}\right)=-1,\quad
 		rstk \text{ with }  \left(\frac{q}{s}\right)=\left(\frac{q}{t}\right)=\left(\frac{q}{k}\right)=-1,\\
 		& rstke&  \text{ with }    \left(\frac{q}{r}\right)=\left(\frac{q}{s}\right)=\left(\frac{q}{t}\right)=\left(\frac{q}{k}\right)=\left(\frac{q}{e}\right)=-1,
 	\end{eqnarray*}
 	where $r$, $s$, $t$,  $k$ and $e$ are prime numbers all different of $q$. 
 	\begin{enumerate}[$\blacktriangleright$]
 		\item   Let  $d=r$ be a prime number. If $ \left(\frac{q}{r}\right)=-1$, then   $ \rg(A(K))=1-1-e_{K/F}$ and so necessary  $   \rg(A(K))=0$.
 	 If $ \left(\frac{q}{r}\right)=1$, then $ \rg(A(K))=1- e_{K/F}$. Thus, by  \cite[Theorem 3.3]{Azmouh2-rank},    $ \rg(A(K))=1$ if and only if   $r\equiv 1\pmod 8$, otherwise  $ \rg(A(K))=0$.
 		
 		\item  Let  $d=rs$.  
 		We have  : 
 		\begin{enumerate}[$\star$]
 			\item  If   $ \left(\frac{q}{r}\right)= \left(\frac{q}{s}\right)=-1$, then   $t_{K/F}=2$  and so we have  $ \rg(A(K))=1$ (cf.  \cite[Theorem 3.3]{Azmouh2-rank}).
 			
 			\item If    $ \left(\frac{q}{r}\right)= -\left(\frac{q}{s}\right)=-1$,  then   $t_{K/F}=3$ and by   \cite[Theorem 3.3]{Azmouh2-rank}, we have $e_{K/F}\in \{0,1\}$. More precisely,  $e_{K/F}=0$ if and only if  $\left(\frac{-1}{ s}\right)=\left(\frac{2}{ s}\right)=1$.
 			Therefore, $ \rg(A(K))\in \{1,2\}$. More precisely, $ \rg(A(K))=2$ if and only if $\left(\frac{-1}{ s}\right)=\left(\frac{2}{ s}\right)=1$, which is equivalent to $s\equiv 1\pmod 8$.


 			\item  If    $ \left(\frac{q}{r}\right)=  \left(\frac{q}{s}\right)= 1$, then   $t_{K/F}=4$  and so  by  \cite[Theorem 3.3]{Azmouh2-rank}, we have
 			  $ \rg(A(K))=1$, $2$ or $3$. More precisely, $ \rg(A(K))=3  $ if and only if $\left(\frac{-1}{ r}\right)=\left(\frac{2}{ r}\right)=   \left(\frac{-1}{ s}\right)=\left(\frac{2}{ s}\right)=1$, which is equivalent to $r\equiv s\equiv 1\pmod 8$.   Moreover, $ \rg(A(K))=2  $ if and only if $(r\equiv 1\pmod 8$ and $s\equiv 3$ or $5\pmod 8)$, 
 		   $(r\equiv s\equiv 3\pmod 8 )$, $(r\equiv s\equiv 5\pmod 8 )$ or $(r\equiv s\equiv 7\pmod 8 )$.
 	\end{enumerate}

 			\item  Let  $d=rst$    with $  \left(\frac{q}{t}\right)=-1$. 	We have  : 
 			\begin{enumerate}[$\star$]
 			
 				\item
 				If $\left(\frac{q}{r}\right)=\left(\frac{q}{s}\right)=1$, then $t_{K/F}=5$ and by \cite[Theorem 3.3]{Azmouh2-rank}, we have $  \rg(A(K))\in\{4,3,2\}$. More precisely, $ \rg(A(K))=4$ if and only if $r\equiv s\equiv 1\pmod 4$ and 
 		$\left(\frac{2}{r}\right)=\left(\frac{2}{s}\right)=1$, moreover, 
 		$ \rg(A(K))=2$ if and only if $\left(\frac{-1}{r}\right)=\left(\frac{2}{r}\right)=-1$ and 
 		$\left(\frac{-1}{s}\right)\not=\left(\frac{2}{s}\right) $.
 		 \item $\left(\frac{q}{r}\right)=-\left(\frac{q}{s}\right)=1$, then $t_{K/F}=4$ and by \cite[Theorem 3.3]{Azmouh2-rank}, we have $  \rg(A(K))\in\{3,2\}$. More precisely,
 	 $ \rg(A(K))=3$ if and only if  
 	 $\left(\frac{-1}{r}\right)=\left(\frac{2}{r}\right)=1$.
 	 
 	 	\item
 	 If $\left(\frac{q}{r}\right)=\left(\frac{q}{s}\right)=-1$, then $t_{K/F}=3$ and so   $\rg(A(K))=2$ (cf. \cite[Theorem 3.3]{Azmouh2-rank}).
  	\end{enumerate}
  Therefore, $ \rg(A(K))=2$ if and only if 
    $[$$\left(\frac{q}{r}\right)=\left(\frac{q}{s}\right)=1$,   $\left(\frac{-1}{r}\right)=\left(\frac{2}{r}\right)=-1$ and 
  $\left(\frac{-1}{s}\right)\not=\left(\frac{2}{s}\right) $$]$ 
   or $[$ $\left(\frac{q}{r}\right)=-\left(\frac{q}{s}\right)=1$ and  $\left(\left(\frac{-1}{r}\right)=\left(\frac{2}{r}\right)=-1\right.$ or   $\left.\left(\frac{-1}{r}\right)\not=\left(\frac{2}{r}\right)\right) ]$.

 	\item  Let $d=	rstk$  with $ \left(\frac{q}{s}\right)=\left(\frac{q}{t}\right)=\left(\frac{q}{k}\right)=-1$.
 		\begin{enumerate}[$\star$]
 		\item If $\left(\frac{q}{r}\right)=-1$, then $t_{K/F}=4$ and so   $\rg(A(K))=3$ (cf. \cite[Theorem 3.3]{Azmouh2-rank}).
 		
 		\item If $\left(\frac{q}{r}\right)=1$, then $t_{K/F}=5$ and so    $ \rg(A(K))=4$ or $3$. More precisely, $ \rg(A(K))=4$ if and only if $r\equiv1\pmod 8$ (cf. \cite[Theorem 3.3]{Azmouh2-rank}).
  	\end{enumerate}

 		\item  Let $d=	rstke$  with    $ \left(\frac{q}{r}\right)=\left(\frac{q}{s}\right)=\left(\frac{q}{t}\right)=\left(\frac{q}{k}\right)=\left(\frac{q}{e}\right)=-1$.
 	Then  $t_{K/F}=5$ and so    $ \rg(A(K))=4$  (cf. \cite[Theorem 3.3]{Azmouh2-rank}).
 	
  	\end{enumerate}	
  This gives the lemma.	
  \end{proof}

The following lemma concerns the fields $K$ that are of the form $C)$.

 \begin{lemma}\label{lemmaq1q2Bd=1mod4} Let $K=\QQ(\sqrt{q_1q_2},\sqrt{d})$ where $q_1\equiv3 \pmod 4$,  $q_2\equiv3 \pmod 8$ are two prime numbers  and  ${\color{blue} d \equiv 1\pmod 4}$ is a  positive  square-free integer that is {\bf not} divisible by $q_1q_2$. Let $\delta\in \{1,q_1,q_2\}$. Then $\rg(A(K))\leq 2$ if and only if $K$ takes one of the following forms:
 	\begin{enumerate}[$1)$]
 		\item  $K=\QQ(\sqrt{q_1q_2},\sqrt{d})$, where   $d=\delta r\equiv 1\pmod 4 $ with $r$ is a prime  number. {\bf In this case,} $\rg(A(K))\in \{0,1\}$, more precisely, $\rg(A(K))=1$ if and only if   $\left(\frac{q_1q_2 }{ r}\right)=\left(\frac{-1 }{ r}\right)=\left(\frac{q_1 }{ r}\right)=1$.

 		\item   $K=\QQ(\sqrt{q_1q_2},\sqrt{d})$, where   $d=\delta rs\equiv 1\pmod 4 $ with $r$ and $s$ are two prime  numbers that {\bf do not} satisfy $ \left(\frac{q_1q_2}{r}\right)=  \left(\frac{q_1q_2}{s}\right)=  \left(\frac{q_1 }{ r}\right)=\left(\frac{-1}{ r}\right)=\left(\frac{q_1 }{ s}\right)=\left(\frac{-1}{s}\right)=1$. {\bf In this case,} $\rg(A(K))\in \{ 1,2\}$, more precisely, $\rg(A(K))=1$ if and only if, after a suitable permutation of $r$ and $s$, we have one of the following conditions:
 		\begin{enumerate}[$\bullet$]
 			\item  $ \left(\frac{q_1q_2}{r}\right)= \left(\frac{q_1q_2}{s}\right)=-1$,
 			\item $ \left(\frac{q_1q_2}{r}\right)= -\left(\frac{q_1q_2}{s}\right)=-1$ and 
 			$[\left(\frac{q_1}{s}\right)\not=\left(\frac{-1}{s}\right)$ or $\left(\frac{q_1}{ s}\right)=\left(\frac{-1}{s}\right)=-1]$,

 			\item $ \left(\frac{q_1q_2}{r}\right)=  \left(\frac{q_1q_2}{s}\right)= 1$, $[ \left(\frac{q_1 }{ r}\right)=-1$ or $\left(\frac{q_1 }{ s}\right)=-1  ]$, $[\left(\frac{-1}{ r}\right)=-1$ or $ \left(\frac{-1}{s}\right)=-1]$
 			and $[ \left(\frac{q_1 }{ r}\right) \left(\frac{-1}{ r}\right)=-1  \text{ or  }   \left(\frac{q_1 }{ s}\right) \left(\frac{-1}{s}\right)=-1   ]$.
 			
 		\end{enumerate}

 		\item    $K=\QQ(\sqrt{q_1q_2},\sqrt{d})$, where   $d=\delta rst\equiv 1\pmod 4 $ with $r$, $s$ and $t$ are   prime  numbers satisfying one of the following conditions:
 		\begin{enumerate}[$\bullet$]
 			\item $\left(\frac{q_1q_2}{r}\right)=\left(\frac{q_1q_2}{s}\right)=\left(\frac{q_1q_2}{t}\right)=-1$, 	
 			\item  $\left(\frac{q_1q_2}{r}\right)=-\left(\frac{q_1q_2}{s}\right)=\left(\frac{q_1q_2}{t}\right)=-1$ and $[\left(\frac{q_1}{s}\right)\not=\left(\frac{-1}{s}\right)$ or $\left(\frac{q_1}{ s}\right)=\left(\frac{-1}{s}\right)=-1  ]$.
 		\end{enumerate}
 		{\bf In this case,} $\rg(A(K))=2$.  
 	\end{enumerate}	
 \end{lemma}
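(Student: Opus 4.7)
The plan is to apply Lemma~\ref{AmbiguousClassNumberFormula} to the QO-extension $K/F$ with $F=\QQ(\sqrt{q_1q_2})$; note that $F$ has odd class number by Lemma~\ref{realQuad} since $q_1\equiv q_2\equiv 3\pmod 4$, so the formula applies and yields $\rg(A(K))=t_{K/F}-1-e_{K/F}$. Because $E_F$ has $\ZZ$-rank $1$, the quotient $E_F/(E_F\cap N_{K/F}(K^*))$ is a subquotient of $E_F/E_F^2\simeq(\ZZ/2\ZZ)^2$, so $e_{K/F}\in\{0,1,2\}$. Thus the constraint $\rg(A(K))\leq 2$ immediately forces $t_{K/F}\leq 5$, and the strategy is entirely parallel to that of Lemma~\ref{lemmaqA}: first enumerate the shapes of $d$ compatible with $t_{K/F}\leq 5$, then for each shape compute $e_{K/F}$.

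First I would determine $t_{K/F}$ from the factorisation of $d$. Since $K=F(\sqrt d)$ is totally real and $d\equiv 1\pmod 4$, there is no ramification at infinite places and the prime of $F$ above $2$ is unramified in $K$. For an odd prime $p\mid d$ coprime to $q_1q_2$, the primes of $F$ above $p$ contribute $2$ to $t_{K/F}$ when $\left(\frac{q_1q_2}{p}\right)=1$ (split) and $1$ when $\left(\frac{q_1q_2}{p}\right)=-1$ (inert); each $q_i\mid d$ contributes $1$ because $q_i$ already ramifies in $F/\QQ$. Enumerating squarefree $d=\delta m$ with $\delta\in\{1,q_1,q_2\}$, $d\equiv 1\pmod 4$, and $m$ coprime to $q_1q_2$, the inequality $t_{K/F}\leq 5$ retains exactly the three families $m=r$, $m=rs$, $m=rst$ appearing in the statement (subject, in the last two, to the Legendre symbol restrictions that prevent $t_{K/F}$ from exceeding $5$).

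Next, in each case I would compute $e_{K/F}$ by the norm-residue analysis of the fundamental unit $\varepsilon_{q_1q_2}\in E_F$: one decides, at every ramified prime $\mathfrak{p}$ of $F$, whether $\varepsilon_{q_1q_2}$ is a local norm from $K_\mathfrak p$ and assembles the resulting matrix to read off the index $[E_F:E_F\cap N_{K/F}(K^*)]$. This is a direct analogue of the machinery of \cite[Theorem 3.3]{Azmouh2-rank} already invoked in Lemma~\ref{lemmaqA}. The residues $r,s,t\pmod 8$ govern the local behaviour at primes above $2$, while the symbols $\left(\frac{q_1}{r}\right)$, $\left(\frac{-1}{r}\right)$, etc., encode the local norm conditions at odd ramified primes through the Scholz-type reciprocity of Remark~\ref{scholsrec} applied to $\varepsilon_{q_1q_2}$. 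Substituting into $\rg(A(K))=t_{K/F}-1-e_{K/F}$ yields each claimed equivalence.

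The main obstacle will be the bookkeeping in the case $d=\delta rs$ with $\left(\frac{q_1q_2}{r}\right)=\left(\frac{q_1q_2}{s}\right)=1$, where $t_{K/F}=4$ and one must sift $e_{K/F}\in\{0,1,2\}$ out of the subtle interplay of the four symbols $\left(\frac{-1}{r}\right),\left(\frac{-1}{s}\right),\left(\frac{q_1}{r}\right),\left(\frac{q_1}{s}\right)$; an analogous but heavier six-symbol analysis appears for $d=\delta rst$. Throughout, the multiplicativity $\left(\frac{q_1q_2}{\ell}\right)=\left(\frac{q_1}{\ell}\right)\left(\frac{q_2}{\ell}\right)$ eliminates the $q_2$-symbol from the resulting conditions, producing the clean statement in terms of $q_1$ alone.
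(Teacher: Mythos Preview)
Your overall strategy matches the paper's exactly: apply Lemma~\ref{AmbiguousClassNumberFormula} to $K/F$ with $F=\QQ(\sqrt{q_1q_2})$, use $e_{K/F}\le 2$ to get $t_{K/F}\le 5$, enumerate the admissible shapes of $d$, and then read off $e_{K/F}$ in each case from \cite[Th\'eor\`emes 3.3 et 3.4]{Azmouh2-rank}.

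There are, however, two slips in your ramification bookkeeping. First, a factor $\delta\in\{q_1,q_2\}$ contributes $0$, not $1$, to $t_{K/F}$: since $q_1q_2$ is a square in $F$ one may rewrite $K=F(\sqrt{q_1q_2\,\delta^{-1}m})$, and one of the two generators is a unit at the prime of $F$ above $q_i$, so that prime is unramified in $K/F$. Second, and this is the actual gap for the ``only if'' direction, the bound $t_{K/F}\le 5$ does \emph{not} retain only the three families $m\in\{r,rs,rst\}$: it also admits $d=\delta rstk$ with $\left(\frac{q_1q_2}{s}\right)=\left(\frac{q_1q_2}{t}\right)=\left(\frac{q_1q_2}{k}\right)=-1$ (then $t_{K/F}\in\{4,5\}$) and $d=\delta rstke$ with all five symbols equal to $-1$ (then $t_{K/F}=5$). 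The paper lists these two extra shapes explicitly and checks that $\rg(A(K))\ge 3$ in both, which you must do as well to close the equivalence. Finally, since $d\equiv 1\pmod 4$ there is no ramification above $2$ in $K/F$, so residues modulo $8$ are irrelevant at this stage (they enter only later for $K_1$); and the input you need for $e_{K/F}$ is \cite{Azmouh2-rank} directly rather than Remark~\ref{scholsrec}, which concerns $\varepsilon_p$ for $p=2$ or $p\equiv 1\pmod 4$, not $\varepsilon_{q_1q_2}$.
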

 \begin{proof}	 	 Assume that $K=\QQ(\sqrt{q_1q_2},\sqrt{d})$ where   $q_1\equiv3 \pmod 4$,  $q_2\equiv3 \pmod 8$ are two prime numbers  and {\color{blue}$d\equiv 1\pmod 4$} is any       positive   square-free integer that is not divisible by   $q_1q_2$.
 	Let $t_{K/F}$ be the number of prime ideals of $ F $ that are ramified in $K$. Thus,  $ \rg(A(K))=t_{K/F}-1-e_{K/F}$, where $F=\QQ(\sqrt{q_1q_2})$. 
 	As  $e_{K/F}\in\{0,1,2\}$,   the inequality $\rg(A(K))\leq 2$ implies that $t_{K/F}\leq 5$. The last inequality is equivalent to the fact that  $d$ takes one of the following five forms :
 	\begin{eqnarray*}
 		&\delta r, &\ \delta rs, \quad \delta  rst  \text{ with }  \left(\frac{q_1q_2}{t}\right)=-1,\\
 		&\delta rstk& \text{ with }  \left(\frac{q_1q_2}{s}\right)=\left(\frac{q_1q_2}{t}\right)=\left(\frac{q_1q_2}{k}\right)=-1,\\
 		&\delta  rstke &  \text{ with }    \left(\frac{q_1q_2}{r}\right)=\left(\frac{q_1q_2}{s}\right)=\left(\frac{q_1q_2}{t}\right)=\left(\frac{q_1q_2}{k}\right)=\left(\frac{q_1q_2}{e}\right)=-1,
 	\end{eqnarray*}
 	where $\delta\in \{1,q_1,q_2\}$ and $r$, $s$, $t$,  $k$ and $e$ are prime numbers all different of $q_1$ and $q_2$.

 	\begin{enumerate}[$\blacktriangleright $]
 		\item 	 Let  $d=\delta r$. If $ \left(\frac{q_1q_2}{r}\right)=-1$, then   $ \rg(A(K))=1-1-e_{K/F}$ and so necessary  $   \rg(A(K))=0$.  If $ \left(\frac{q_1q_2}{r}\right)=1$, then  $ \rg(A(K))=2-1-e_{K/F}$ and $e_{K/F}=0$ if and only if   $\left(\frac{-1 }{ r}\right)=\left(\frac{q_1 }{ r}\right)=1$ (cf. \cite[Théorèmes 3.3 et 3.4]{Azmouh2-rank}).  
 		Hence  $ \rg(A(K))=1$ if and only if  $\left(\frac{-1 }{ r}\right)=\left(\frac{q_1 }{ r}\right)=1$, otherwise  $ \rg(A(K))=0$.

 		\item Let  $d=\delta rs$. We have  $ \rg(A(K))=t_{K/F}-1-e_{K/F}$ and by \cite[Théorèmes 3.3 et 3.4]{Azmouh2-rank}, we have:
 		\begin{enumerate}[$\star$]
 			\item  If   $ \left(\frac{q_1q_2}{r}\right)= \left(\frac{q_1q_2}{s}\right)=-1$,  then     $ \rg(A(K))= 2-1-0=1$.

 			\item If    $ \left(\frac{q_1q_2}{r}\right)= -\left(\frac{q_1q_2}{s}\right)=-1$, then  $ \rg(A(K)) =2-e_{K/F}\in\{1,2\}$,   and  $  \rg(A(K))=2$ if and only if $\left(\frac{q_1 }{s}\right)=\left(\frac{-1}{s}\right)=1$.

 			\item  If    $ \left(\frac{q_1q_2}{r}\right)=  \left(\frac{q_1q_2}{s}\right)= 1$, then we have: 
 			$ \rg(A(K)) =3-e_{K/F}\in\{1,2,3\}$,   more precisely,   $  \rg(A(K))=3$ if and only if $\left(\frac{q_1 }{ r}\right)=\left(\frac{-1}{ r}\right)=\left(\frac{q_1 }{ s}\right)=\left(\frac{-1}{s}\right)=1$
 			and  $  \rg(A(K))=1$ if and only if $[ \left(\frac{q_1 }{ r}\right)=-1$ or $\left(\frac{q_1 }{ s}\right)=-1  ]$, $[\left(\frac{-1}{ r}\right)=-1$ or $ \left(\frac{-1}{s}\right)=-1]$
 			and $[ \left(\frac{q_1 }{ r}\right) \left(\frac{-1}{ r}\right)=-1  \text{ or  }   \left(\frac{q_1 }{ s}\right) \left(\frac{-1}{s}\right)=-1   ]$.
 		\end{enumerate}	
 		
 		\item Let  $d=\delta  rst  \text{ with }  \left(\frac{q_1q_2}{t}\right)=-1$. Notice that if 
 		$\left(\frac{q_1q_2}{r}\right)=\left(\frac{q_1q_2}{s}\right)=1$,  $t_{K/F}=5$ and so $  \rg(A(K))\geq 3$. Furthermore, if $\left(\frac{q_1q_2}{r}\right)=\left(\frac{q_1q_2}{s}\right)=-1$, then    $\rg(A(K))=3-1-0=2$. Now assume that $\left(\frac{q_1q_2}{r}\right)=-\left(\frac{q_1q_2}{s}\right)=-1$. 
 		Thus, we have $\rg(A(K))=4-1-e_{K/F}\in\{2,3\} $ and $\rg(A(K))=2$ if and only if $[\left(\frac{q_1}{s}\right)\not=\left(\frac{-1}{s}\right)$ or $\left(\frac{q_1}{ s}\right)=\left(\frac{-1}{s}\right)=-1]$.

 		\item Let  $d= \delta rstk$    with    $\left(\frac{q_1q_2}{s}\right)=\left(\frac{q_1q_2}{t}\right)=\left(\frac{q_1q_2}{k}\right)=-1$.
 		We have	$ \rg(A(K))=t_{K/F}-1-e_{K/F}\in \{3,4\}$, and $ \rg(A(K))=3$ if and only if $\left(\frac{q_1q_2}{r}\right) =-1$ or $[\left(\frac{q_1q_2}{r}\right) =1$ and  $\left(\frac{q_1}{r}\right)=\left(\frac{-1}{r}\right)=1]$.
 		\item Let  $d= \delta rstke$    with $\left(\frac{q_1q_2}{r}\right)=\left(\frac{q_1q_2}{s}\right)=\left(\frac{q_1q_2}{t}\right)=\left(\frac{q_1q_2}{k}\right)=\left(\frac{q_1q_2}{e}\right)=-1$. It is clear that 	$	  \rg(A(K))=t_{K/F}-1-e_{K/F}=4$.
 		
 	\end{enumerate}	This completes the proof of the lemma.
  \end{proof}

 \begin{lemma}\label{lemma2qB} Let $K=\QQ(\sqrt{2q},\sqrt{d})$ where $q\equiv 3\pmod 4$  is a prime number and $d\equiv1\pmod4$ is a  positive   square-free integer. Then $\rg(A(K))\leq 2$ if and only if $K$ takes one of the following forms:
 	\begin{enumerate}[$1)$]
 		\item $K=\QQ(\sqrt{2q},\sqrt{d})$, where   $d=r\equiv1\pmod4$ or  $d=qr\equiv1\pmod4$ with $r$ is a    prime number. {\bf In this case,} $\rg(A(K))\in \{0,1\}$, more precisely, $\rg(A(K))=1$ if and only if 
 		$ \left(\frac{q}{r}\right)=1$	and $r\equiv 1\pmod 8$.
 		
 		\item   $K=\QQ(\sqrt{2q},\sqrt{d})$, where    $d=\delta rs\equiv1\pmod4 $ with $\delta\in \{1,q\}$, $r$ and $s$ are two primes number such that $(r,s)\not\in   \mathbb P_{q}^2$. {\bf In this case,} $\rg(A(K))\in \{1,2\}$, more precisely, $\rg(A(K))=1$ if and only if, after a suitable permutation, $r$ and $s$ satisfy one of the following condition:
 		\begin{enumerate}[\indent$	 \C1: $]
 			\item   $ \left(\frac{2q}{r}\right)= \left(\frac{2q}{s}\right)=-1$,
 			\item  $ \left(\frac{2q}{r}\right)= -\left(\frac{2q}{s}\right)=-1$ and $s\not\equiv 1\pmod 8$,
 			\item  $ \left(\frac{2q}{r}\right)=  \left(\frac{2q}{s}\right)= 1$,  with  $ r\equiv  7\pmod 8$ and $s\equiv 3 \pmod 8 $.
 		\end{enumerate}
 		
 		\item   $K=\QQ(\sqrt{2q},\sqrt{d})$, where    $d=\delta rst\equiv1\pmod4 $
 		with $\delta\in \{1,q\}$ and  $r$, $s$ and $t$ are   primes numbers satisfying one of the following conditions:
 		\begin{enumerate}[\indent$	 \C1: $]
 		
 			\item   $\left(\frac{2q}{r}\right)=\left(\frac{2q}{s}\right)=-\left(\frac{2q}{t}\right)=1$,   $\left(\frac{-1}{r}\right)=\left(\frac{2}{r}\right)=-1$ and 
 			$\left(\frac{-1}{s}\right)\not=\left(\frac{2}{s}\right) $, 
 			\item    $\left(\frac{2q}{r}\right)=-\left(\frac{2q}{s}\right)=-\left(\frac{2q}{t}\right)=1$ and  $[  \left(\frac{-1}{r}\right)=\left(\frac{2}{r}\right)=-1 $ or   $ \left(\frac{-1}{r}\right)\not=\left(\frac{2}{r}\right) ]$,
 			
 		 \item $\left(\frac{2q}{r}\right)=\left(\frac{2q}{s}\right)=\left(\frac{2q}{t}\right)=-1$.   
 		\end{enumerate}
 		{\bf In this case,} $\rg(A(K))=2$.
 	\end{enumerate}	
 \end{lemma}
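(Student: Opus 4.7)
The plan is to mirror, step for step, the strategy carried out in Lemmas \ref{lemmaqA} and \ref{lemmaq1q2Bd=1mod4}, but now with base field $F=\QQ(\sqrt{2q})$ (which has odd class number by Lemma \ref{realQuad}) and with the relative discriminant coming from $d$ or $qd$. The key tool is Lemma \ref{AmbiguousClassNumberFormula}, which gives
$$\rg(A(K))=t_{K/F}-1-e_{K/F},$$
where $t_{K/F}$ is the number of ramified primes in the QO-extension $K/F$ and $2^{e_{K/F}}=[E_F:E_F\cap N_{K/F}(K^*)]$. Since $F$ has two fundamental units (the units $-1$ and $\varepsilon_{2q}$ generate $E_F/E_F^2$, up to a $\pm 1$ issue that is harmless here), one has $e_{K/F}\in\{0,1,2\}$, so the hypothesis $\rg(A(K))\le 2$ immediately forces $t_{K/F}\le 5$.

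The first step is to enumerate all square-free $d\equiv 1\pmod 4$, coprime to $2q$ where appropriate, such that $t_{K/F}\le 5$. Writing the prime decomposition of $d$ (or $qd$ when $\delta=q$), a prime $r\mid d$ with $\left(\frac{2q}{r}\right)=1$ contributes $2$ to $t_{K/F}$, while one with $\left(\frac{2q}{r}\right)=-1$ contributes $1$; the prime $q$ contributes $1$ when $\delta=q$. This yields the five shapes $\delta r$, $\delta rs$, $\delta rst$ (with $\left(\tfrac{2q}{t}\right)=-1$), $\delta rstk$ (three of the $\left(\tfrac{2q}{\cdot}\right)$ being $-1$), and $\delta rstke$ (all $-1$), for $\delta\in\{1,q\}$, analogous to the decomposition at the start of the proofs of the preceding lemmas.

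The second step is to compute $e_{K/F}$ in each shape by invoking \cite[Theorem 3.3]{Azmouh2-rank} (and Théorème~3.4 when $\delta=q$ introduces $q$ as an extra ramified prime). This reduces every case to a bookkeeping of Legendre (and, for the prime $2$, supplementary) symbols $\left(\tfrac{-1}{r}\right),\left(\tfrac{2}{r}\right),\left(\tfrac{q}{r}\right)$ attached to the primes dividing $d$. One then sorts the outcomes by the required inequality $\rg(A(K))\le 2$; in each surviving subcase the list of congruence conditions modulo $8$ in the statement is exactly the translation of the symbol constraints (using e.g.\ $\left(\tfrac{2}{r}\right)=1\Leftrightarrow r\equiv\pm 1\pmod 8$, $\left(\tfrac{-1}{r}\right)=1\Leftrightarrow r\equiv 1\pmod 4$, and $\left(\tfrac{2q}{r}\right)=\left(\tfrac{2}{r}\right)\left(\tfrac{q}{r}\right)$).

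The expected main obstacle is simply the combinatorial explosion of subcases in the shape $d=\delta rst$, where the three primes $r,s,t$ can carry the splitting signs in several orders and both values of $\delta$ must be considered; this is where the case analysis of Lemma \ref{lemmaqA} was heaviest, and because $2q$ replaces $q$ one has to redo the $e_{K/F}$ calculation with the prime $2$ now ramified in $F$. No new conceptual ingredient is needed beyond \cite{Azmouh2-rank} and the symbol identities above, so modulo a careful bookkeeping of the $\{1,q\}$ choice of $\delta$, the proof is a direct transcription of the arguments already given in the proofs of Lemmas \ref{lemmaqA} and \ref{lemmaq1q2Bd=1mod4}.
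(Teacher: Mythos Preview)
Your plan is the same as the paper's: take $F=\QQ(\sqrt{2q})$, apply Lemma~\ref{AmbiguousClassNumberFormula} to get $\rg(A(K))=t_{K/F}-1-e_{K/F}$ with $e_{K/F}\le 2$, hence $t_{K/F}\le 5$, enumerate the five shapes $\delta r,\delta rs,\delta rst,\delta rstk,\delta rstke$ with $\delta\in\{1,q\}$, and then read off $e_{K/F}$ from \cite[Th\'eor\`eme~3.3]{Azmouh2-rank}; the paper in fact only writes out the case $d=\delta r$ explicitly and refers the rest back to Lemma~\ref{lemmaqA}. One small correction: the prime $q$ does \emph{not} contribute to $t_{K/F}$ when $\delta=q$, since $K=F(\sqrt{d})=F(\sqrt{2d/q})$ and $q\nmid 2d/q$; so the count of ramified primes depends only on the primes $r,s,\dots$ dividing $d/\delta$ and their $\left(\tfrac{2q}{\cdot}\right)$-signs, exactly as in the paper's enumeration.
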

 \begin{proof}
 Let $t$ be the number of prime ideals of $ F=\QQ(\sqrt{2q})$ that are ramified in $K$. Thus,  $ \rg(A(K))=t_{K/F}-1-e_{K/F}$. 
 As  $e_{K/F}\in\{0,1,2\}$,   the inequality $\rg(A(K))\leq 2$ implies that $t_{K/F}\leq 5$. The last inequality is equivalent to the fact that  $d$ takes one of the following five forms :
 \begin{eqnarray*}
 	&\delta r,&\quad \delta rs, \quad \delta rst  \text{ with }  \left(\frac{2q}{t}\right)=-1,\\
 &	\delta rstk& \text{ with }  \left(\frac{2q}{s}\right)=\left(\frac{2q}{t}\right)=\left(\frac{2q}{k}\right)=-1,\\
 	& \delta rstke&  \text{ with }    \left(\frac{2q}{r}\right)=\left(\frac{2q}{s}\right)=\left(\frac{2q}{t}\right)=\left(\frac{2q}{k}\right)=\left(\frac{2q}{e}\right)=-1,
 \end{eqnarray*}
 where $\delta\in \{1,q\}$ and $r$, $s$, $t$,  $k$ and $e$ are odd prime numbers all different of $q$. 
 \begin{enumerate}[$\blacktriangleright$]
 	\item   Let  $d=\delta r$. Assume that $ \left(\frac{2q}{r}\right)=-1$, then  by $ \rg(A(K))=1-1-e_{K/F}=0$ (cf. \cite[Théorème 3.3]{Azmouh2-rank}).
 	   Now assume that $ \left(\frac{2q}{r}\right)=1$, then  $ \rg(A(K))=2-1-e_{K/F}$ and $e_{K/F}=0$ if and only if   $\left(\frac{-1 }{ r}\right)=\left(\frac{2 }{ r}\right)=1$ (cf. \cite[Théorème 3.3]{Azmouh2-rank}).  
 Thus,  $ \rg(A(K))=1$ if and only if  $\left(\frac{-1 }{ r}\right)=\left(\frac{2}{ r}\right)=1$, otherwise  $ \rg(A(K))=0$.
  
  	\item 	We proceed similarly, as in the proof of Lemma \ref{lemmaqA}, for the remain cases to complete the proof.
  	\end{enumerate}
 \end{proof}

 \bigskip
 
 Now we investigate the $2$-rank of the class group of $K_1$. 
 Let us state the following remark that will be useful latter.  For more details concerning the properties of norms residue symbols, we refer the reader to 
    \cite[Chapter II, Theorem 3.1.3]{grasbook} or 
 \cite[Chapter X]{Fuller}  and he can find them summarized in \cite{ACZrendi}.

 \begin{remark}\label{remkresiduesymbolsVYfroms}Let $\mathcal{P}$ be a prime ideal of a real biquadratic field $K=\QQ(\sqrt{d_1},\sqrt{d_2})$ lying above an odd rational prime $p$ that is not ramified and not totally decomposed in $K$. Then for  a quadratic subfield $k=\QQ(\sqrt{d})$ of $K$, the decomposition of $p$ takes one of the following cases:
 	\begin{center}

 		\begin{tikzpicture}
 			
 			\node at (-3.5,3) (nodeA) {$K$};
 			\node at (-3.5,2)(nodeB) {$k$};
 			\node at (-3.5,1) (nodeQ) {$\QQ$};
 			
 			\draw  (nodeA) -- (nodeB);
 			\draw  (nodeB) -- (nodeQ);
 			
 			\filldraw (-1,3) circle (2pt);
 			\filldraw (1,3) circle (2pt);
 			
 			\filldraw (0,2) circle (2pt);
 			\draw (-1,3) -- (0,2) -- (1,3);
 			\filldraw (0,1) circle (2pt);
 			\draw  (0,2) -- (0,1);
 			
 			\node at (0,0) (nodeA) {\text{In this case: $p$  is inert in $k$}};
 			\filldraw (5,3) circle (2pt);
 			\filldraw (7,3) circle (2pt);
 			\filldraw (5,2) circle (2pt);
 			\filldraw (7,2) circle (2pt);
 			\filldraw (6,1) circle (2pt);
 			\draw (5,2) -- (6,1) -- (7,2);
 			\draw (5,2) -- (5,3);
 			\draw (7,2) -- (7,3);
 			\node at (7,0) (nodeA) {\text{In this case: $p$  decomposes in $k$}};
 		\end{tikzpicture}
 	\end{center}
 	 Let us say that the {\bf decomposition of $p$ in $K$ takes the $\Y$  form for $k$} if we are in the left side case and let us say that {\bf the decomposition of $p$ in $K$ takes the $\V$  form for $k$} if we are in the right side case. Let $u$ be unit of $K$. Then by the properties of the norm residue symbol    we have:
 	\begin{enumerate}[$1)$]
 		\item  If $u\in k$ and the decomposition $p$ in $K$ takes the  $\Y$ form for $k$ then we have :
 		$$\left(\frac{u,\,p}{ \mathcal P}\right) =\left(\frac{N_{k/\QQ}(u) }{ p}\right).$$
 		
 		In fact, since the prime ideal $ \mathcal P_{k}:=\mathcal P\cap k$ of $k$ above $p$ is totally decomposed in $K$ and  $u\in k$, this implies that 
 		$\left(\frac{u,\,p}{ \mathcal P}\right)=\left(\frac{u}{\mathcal P_{k}}\right)$. As $\mathcal P_{k}$ the unique prime ideal of $k$ laying above $p$,  
 		$\left(\frac{u,\,p}{\mathcal P_{k}}\right)=\left(\frac{N_{k/\QQ}(u) }{ p}\right)$, which gives the above equality. Similarly, by using the properties of norm residue symbols,  we have: 		
 		\item  If $u\in k$ and the decomposition $p$ in $K$ takes the  $\V$ form for $k$ then we have :
 		$$\left(\frac{u,\,p}{ \mathcal P}\right) =1.$$
 		
 		\item   If $u\not\in k$ and the decomposition $p$ in $K$ takes the  $\V$ form for $k$ then we have :
 		$$\left(\frac{u,\,p}{ \mathcal P}\right) =\left(\frac{N_{K/k}(u),\,p}{ \mathcal P_{k}}\right),$$
 		where   $ \mathcal P_{k}=\mathcal P\cap k$. If furthermore  $N_{K/k}(u)\in \QQ$, then $\left(\frac{u,\,p}{ \mathcal P}\right) =\left(\frac{N_{K/k}(u)}{ p}\right)$.
 		\item If $u\in \QQ$ then 
 		$$\left(\frac{u,\,p}{ \mathcal P}\right)= 1.$$
 	\end{enumerate}

 \end{remark}

 Let $L$ be the biquadratic field named in the introduction. We have:
 
 \begin{lemma}\label{realTruquad2-rankq} Let $K= \QQ(\sqrt{q},\sqrt{d})\not=L$ and  $K_1= \QQ(\sqrt{q},\sqrt{d}, \sqrt{2})$  where  $q\equiv 3\pmod 4$ is a prime number and $d > 2$  is square free integer relatively prime to  $q$.

 	\noindent $\mathbf{1)}$	Assume that $K_1= \QQ(\sqrt{q},\sqrt{d}, \sqrt{2})$, with $d=r$ is an odd prime number.  
 	We have  $$\rg(A(K_1))\in\{0,1\}.$$ More precisely:
 	$	\rg(A(K_1)) =0$ if and only if  we have      one of the following conditions :  
 	\begin{enumerate}[\indent$\bullet$]
 		\item $r\equiv 3$ or $5\pmod 8$, 
 		\item $r\equiv 7\pmod 8$ and  $q\equiv 3\pmod 8$.
 	\end{enumerate}

 	\noindent $\mathbf{2)}$	Assume that $K_1= \QQ(\sqrt{q},\sqrt{d}, \sqrt{2})$ with $d=rs$, where $r$ and $s$ are two prime numbers.
 	We have:  $$\rg(A(K_1))\in\{1,2,3,4\}.$$
 	More precisely,  we have:
 	
 	\noindent $\rg(A(K_1))=1$ if and only if, after a suitable permutation of $r$ and $s$, we have one of the following conditions:
 	
 	\begin{enumerate}[$\C1:$]
 		\item $ \left(\frac{q}{s}\right)=\left(\frac{q}{r}\right)=-1$ and we have one of the following congruence conditions:
 		\begin{enumerate}[\indent$\bullet$]
 			\item $r\equiv     5\pmod 8$ and  $s\equiv     3\pmod 8$,  
 			
 			\item $r\equiv     3$ or $5\pmod 8$,  $s\equiv    7\pmod 8$  and $q\equiv 3\pmod 8$.
 		\end{enumerate}

 		\item 	$ \left(\frac{q}{r}\right)=-\left(\frac{q}{s}\right)=-1$   and we have one of the following congruence conditions:
 		\begin{enumerate}[\indent$\bullet$]
 			\item $r\equiv    5 \pmod 8$ and  $s\equiv   5$ or $3\pmod 8$,
 			\item   $r\equiv     3 \pmod 8$,  $s\equiv    3\pmod 8$  and $q\equiv 7\pmod 8$,  
 			\item $r\equiv    3 \pmod 8$ and  $s\equiv   5\pmod 8$,

 		\item    $r\equiv   7\pmod 8$,  $s\equiv  3$ or $5\pmod 8$ and   $q\equiv 3\pmod 8$,
 		
 		\item     $r\equiv  3$ or $5\pmod 8$, $s\equiv   7\pmod 8$ and $q\equiv 3\pmod 8$.	
 			
 		\end{enumerate}
 		
 		\item  $ \left(\frac{q}{r}\right)=\left(\frac{q}{s}\right)=1$  and we have one of the following congruence conditions:
 		\begin{enumerate}[\indent$\bullet$]
 			\item   $r\equiv    7\pmod 8$, $s\equiv     3$ or $5\pmod 8$,  and $q\equiv 3\pmod 8$,
 			
 			\item $r\equiv     3 \pmod 8$ and  $s\equiv    5\pmod 8$.
 		\end{enumerate}
 	\end{enumerate}

 	\noindent $\rg(A(K_1))=2$ if and only if, after a suitable permutation of $r$ and $s$, we have one of the following conditions:
 	
 	\begin{enumerate}[$\C1:$]
 		\item $ \left(\frac{q}{s}\right)=\left(\frac{q}{r}\right)=-1$ and we have one of the following congruence conditions:
 		\begin{enumerate}[\indent$\bullet$]
 			\item $r\equiv     3$ or $5\pmod 8$,  $s\equiv    7\pmod 8$  and $q\equiv 7\pmod 8$,
 			
 			\item $r\equiv s\equiv 3$ or $5\pmod 8$,
 			
 			\item $r\equiv     3$ or $5\pmod 8$  and  $s\equiv    1\pmod 8$,
 			\item   $r\equiv    7\pmod 8$,  $s\equiv     7$ or $1\pmod 8$ and $q\equiv 3\pmod 8$.
 			
 		\end{enumerate}

 		\item 	$ \left(\frac{q}{r}\right)=-\left(\frac{q}{s}\right)=-1$  and we have one of the following congruence conditions:
 		\begin{enumerate}[\indent$\bullet$]
 			\item $r\equiv     7\pmod 8$  and  $s\equiv    7\pmod 8$  and $q\equiv 3\pmod 8$,      
 			\item $r\equiv     3\pmod 8$  and  $s\equiv    3\pmod 8$  and $q\equiv 3\pmod 8$,
 			\item   $r\equiv   1\pmod 8$,  $s\equiv   7\pmod 8$ and $q\equiv 3\pmod 8$,
 			
 			\item   $r\equiv   1\pmod 8$ and  $s\equiv   3$ or $5\pmod 8$,
 			
 			\item    $r\equiv   7\pmod 8$,  $s\equiv  3$ or $5\pmod 8$ and $q\equiv 7\pmod 8$,
 			
 			\item    $r\equiv  3$ or $5\pmod 8$, $s\equiv   7\pmod 8$ and $q\equiv 7\pmod 8$.

 		\end{enumerate}
 		
 		\item  $ \left(\frac{q}{r}\right)=\left(\frac{q}{s}\right)=1$  and we have one of the following congruence conditions:
 		\begin{enumerate}[\indent$\bullet$]
 			\item $r\equiv s\equiv 3$ or $5\pmod 8$,
 			
 			\item  	$r\equiv     7\pmod 8$,  $s\equiv    3$ or $5\pmod 8$  and $q\equiv 7\pmod 8$. 
 		\end{enumerate}
 	\end{enumerate}
 \end{lemma}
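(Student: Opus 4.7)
The plan is to apply the ambiguous class number formula of Lemma \ref{AmbiguousClassNumberFormula} to the extension $K_1/F$, where I take $F=\QQ(\sqrt{2},\sqrt{q})$. Since $q\equiv 3\pmod 4$, Lemma \ref{realBiquad} gives that $F$ has odd class number, so $K_1=F(\sqrt{d})$ is a QO-extension and
\[
\rg(A(K_1)) = t_{K_1/F} - 1 - e_{K_1/F},
\]
where $t_{K_1/F}$ counts the (finite and infinite) primes of $F$ ramified in $K_1$ and $2^{e_{K_1/F}}=[E_F:E_F\cap N_{K_1/F}(K_1^{\times})]$.

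Computing $t_{K_1/F}$ is a decomposition exercise: no infinite place ramifies because $K_1$ is totally real; for each odd prime $\ell\mid d$, the number of primes of $F$ above $\ell$ is controlled by the symbols $\left(\frac{2}{\ell}\right)$, $\left(\frac{q}{\ell}\right)$, $\left(\frac{2q}{\ell}\right)$; and the contribution from the prime of $F$ above $2$ is dictated by $d$ modulo $8$. All of these depend only on the congruences of $q,r,s$ modulo $8$ together with the values of $\left(\frac{q}{r}\right)$ and $\left(\frac{q}{s}\right)$, so $t_{K_1/F}$ admits an explicit tabulation in each case of the lemma.

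The substantive step is computing $e_{K_1/F}$. I would first fix a fundamental system of units of $F$ by the Wada procedure described after Lemma \ref{wada's f.}, expressing every unit of $F$ in terms of $\varepsilon_2, \varepsilon_q, \varepsilon_{2q}$ and a possible square root of a suitable product. By Hasse's norm theorem, a unit $\eta\in E_F$ is a norm from $K_1$ iff its Hilbert symbols $\left(\frac{\eta,d}{\mathfrak{p}}\right)$ are trivial at every ramified prime $\mathfrak{p}$ of $F$. These symbols are evaluated via Remark \ref{remkresiduesymbolsVYfroms}: at a ramified prime $\mathfrak{p}$ above $\ell$ that decomposes in $\Y$-form for a quadratic subfield $k$ containing $\eta$, the symbol reduces to $\left(\frac{N_{k/\QQ}(\eta)}{\ell}\right)$, while at $\V$-form primes it either vanishes or descends to a subfield computation. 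Legendre symbols of the shape $\left(\frac{\varepsilon_p}{\ell}\right)$ are then handled by Scholz's reciprocity (Remark \ref{scholsrec}) whenever its hypotheses apply. The value $e_{K_1/F}$ is the $\FF_2$-rank of the resulting matrix of Hilbert symbols, viewed as a linear map from $E_F/E_F^2$ to the product of local norm-obstruction groups at the ramified primes.

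The main obstacle is the sheer proliferation of subcases. For part 2 one must track $q,r,s$ modulo $8$ together with the signs of $\left(\frac{q}{r}\right)$ and $\left(\frac{q}{s}\right)$, giving on the order of several dozen configurations; in each one, both $t_{K_1/F}$ and the specific $\Y$/$\V$ decomposition patterns change, and consequently so does the entire Hilbert-symbol matrix whose rank controls $e_{K_1/F}$. The hypothesis $K\neq L$ is needed to rule out precisely those configurations with $r\equiv 1\pmod 8$ and $\left(\frac{q}{r}\right)=1$, in which a Scholz-type coincidence makes an extra unit of $F$ a norm everywhere locally and forces the rank outside the tabulated list. The final enumeration into the cases $\C1, \C2, \C3$ in each rank class is then obtained by bookkeeping: one collects all configurations for which $t_{K_1/F} - 1 - e_{K_1/F}$ equals the target value ($0$ in part 1; $1$ or $2$ in part 2), which is tedious but mechanical once the two ingredients above are in hand.
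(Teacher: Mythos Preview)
Your overall strategy matches the paper's: apply Lemma~\ref{AmbiguousClassNumberFormula} to $K_1/F$ with $F=\QQ(\sqrt{2},\sqrt{q})$, compute $t_{K_1/F}$ by decomposition, and determine $e_{K_1/F}$ via norm residue symbols using Remark~\ref{remkresiduesymbolsVYfroms}. However, there is a genuine gap.

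First, a minor point: the unit group of $F$ is not generated by $\varepsilon_2,\varepsilon_q,\varepsilon_{2q}$ together with a single square root of a product; the paper uses the explicit fact that $E_F=\langle -1,\varepsilon_2,\sqrt{\varepsilon_q},\sqrt{\varepsilon_{2q}}\rangle$, with concrete expressions $\sqrt{\varepsilon_q}=\tfrac{1}{\sqrt{2}}(x+y\sqrt{q})$ and $\sqrt{\varepsilon_{2q}}=\tfrac{1}{\sqrt{2}}(\alpha+\beta\sqrt{2q})$ (the sign of the norm depending on $q\bmod 8$). These explicit forms are what make the $N_{F/k}$ computations in the $\V$-form cases tractable.

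The substantive gap is the case $r\equiv 7\pmod 8$ with $\left(\frac{q}{r}\right)=1$, which is \emph{not} excluded by $K\neq L$ (that hypothesis only removes $r\equiv 1\pmod 8$). Here $r$ is totally decomposed in $F$, so Remark~\ref{remkresiduesymbolsVYfroms} does not apply, and Scholz reciprocity (Remark~\ref{scholsrec}) only settles the symbols for $-1$ and $\varepsilon_2$, not for $\sqrt{\varepsilon_q}$ or $\sqrt{\varepsilon_{2q}}$. The paper acknowledges that these last symbols appear inaccessible to direct computation and instead argues indirectly: it passes to the auxiliary field $M=\QQ(\sqrt{2r},\sqrt{q})$, for which $K_1/M$ is unramified quadratic, computes $h_2(M)=q(M)h_2(2qr)/4$ via Kuroda's formula, and uses the known value of $h_2(2qr)$ (which is $2$ iff $q\equiv 3\pmod 8$) to pin down $\rg(A(K_1))$ and hence $e_{K_1/F}$ a posteriori. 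This detour, and the resulting Remark~\ref{remqym7mod8}, is essential both in part~1 and in every subcase of part~2 where a prime $\equiv 7\pmod 8$ with Legendre symbol $+1$ occurs; your proposal does not account for it.
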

 \begin{proof}
 	As the class number of $K_1'=\QQ(\sqrt{q},\sqrt{2}) $ is odd, we get  $\rg(A(K_1))=t_{K_1/K_1'}-1-e_{K_1/K_1'}$. 
 	By combining \cite[Lemma 2.5]{Chemsarith} and  \cite[Lemma 3.2]{CEH2024}, we have
 	$\sqrt{\varepsilon_{q}}=\frac{\sqrt{2}}{2}(x+y\sqrt{q})   $ (resp. $\sqrt{\varepsilon_{2q}}=\frac{\sqrt{2}}{2}(\alpha+\beta\sqrt{2q}))$ where $x$ and $y$ $($resp. $\alpha$ and $\beta )$ are two integers such that $(-1)^{\gamma}2=x^2-y^2q$ $($resp.  $(-1)^{\gamma}2=\alpha^2-\beta^22q)$, where $\gamma=0$ or $1$ according to whether 
 	$q\equiv 7\pmod 8$ or $q\equiv 3\pmod 8$.
 	  Notice that $E_{K_1'}=\langle-1, \vep_{2},\sqrt{\vep_{q}},\sqrt{\vep_{2q}} \rangle $.
 	\begin{enumerate}[$1)$]
 		\item Let $r$ be an odd prime number and $\mathcal R$   a prime ideal of $K_1'$   lying above $r$. By the previous remark, if $r$ is not totally decomposed in $K_1'$, then  $\left(\dfrac{-1,\,r}{ \mathcal R}\right)=1 $,   else we have $\left(\dfrac{-1,\,r}{ \mathcal R}\right)=\left(\dfrac{-1 }{ r}\right) $. Furthermore, we have: 
 		\begin{enumerate}[$\bigstar$]
 			\item Assume that $r\equiv 3$ or $5\pmod 8$. Then the decomposition of $r$ in $K_1'$ takes the $\Y$ form for $\QQ(\sqrt{2})$ and so
 			\begin{eqnarray}
 				\left(\frac{\vep_2,\,r}{ \mathcal R}\right)&=&\left(\frac{-1}{ r}\right)=(-1)^{\frac{r-1}{2}} .   
 			\end{eqnarray}
 			For the other symbols we distinguish the following two subcases:\\
 			\noindent$\bullet$ If $ \left(\frac{q}{r}\right)=-1$, then the decomposition of $r$ in $K_1'$ takes the $\V$ form for $\QQ(\sqrt{2q})$. As 
 			$N_{K_1'/\QQ(\sqrt{2q})}(\sqrt{\vep_{ q}})= -\frac{1}{{2}}(x^2-y^2q)         =(-1)^{\gamma+1}\in \QQ$ and  $N_{K_1'/\QQ(\sqrt{2q})}(\sqrt{\vep_{ 2q}})=-\vep_{ 2q}$, then the previous remark gives:
 			\begin{eqnarray}
 				\left(\frac{\sqrt{\vep_{ q}},\,r}{ \mathcal R}\right)&=& \left(\frac{(-1)^{\gamma+1} }{r}\right)   ,\\
 				& &\nonumber\\
 				\left(\frac{\sqrt{\vep_{2q}},\,r}{ \mathcal R}\right)&=&\left(\frac{ -\vep_{ 2q},\,r}{ \mathcal R_{\QQ(\sqrt{2q})}}\right)=\left(\frac{-1}{ r}\right)\left(\frac{ 2}{ r}\right)  .
 			\end{eqnarray}
 			The last equality follows from the fact that $2\vep_{ 2q}=\nu^2$ for some $\nu  \in \QQ(\sqrt{2q})$ $($in fact: $\sqrt{\varepsilon_{2q}}=\frac{\sqrt{2}}{2}(\alpha+\beta\sqrt{2q})))$.

 			\noindent $\bullet$ Now if  $ \left(\frac{q}{r}\right)=1$,	then the decomposition of $r$ in $K_1'$ takes the $\V$ form for $\QQ(\sqrt{q})$. As 
 			$N_{K_1'/\QQ(\sqrt{q})}(\sqrt{\vep_{ q}})=  -\vep_{ q} $ and  $N_{K_1'/\QQ(\sqrt{q})}(\sqrt{\vep_{ 2q}})= (-1)^{\gamma+1}\in \QQ$, then as in the previous case and using the above remark we get: 
 			
 			\begin{eqnarray}
 				\left(\frac{\sqrt{\vep_{q}},\,r}{ \mathcal R}\right)&=& \left(\frac{ -\vep_{ q},\,r}{ \mathcal R_{\QQ(\sqrt{2q})}}\right)=\left(\frac{-1 }{ r}\right)\left(\frac{ 2}{ r}\right)  	\\
 				& &\nonumber\\
 				\left(\frac{\sqrt{\vep_{ 2q}},\,r}{ \mathcal R}\right)&=& \left(\frac{(-1)^{\gamma+1} }{r}\right)   .
 			\end{eqnarray}

 			Now one can deduces easily that  $\rg(A(K_1))=2-1-e_{K_1/K_1'}=1-e_{K_1/K_1'}=0$.

 			\item Assume that $r\equiv 7$ or $1\pmod 8$ with $ \left(\frac{q}{r}\right)=-1$.  Then the decomposition of    $r$ in $K_1'$ takes the $\V$ form for $\QQ(\sqrt{2})$ and so the above remark gives:
 			
 			\begin{eqnarray}
 				\left(\frac{\vep_2,\,r}{ \mathcal R}\right)&=&1.
 			\end{eqnarray}
 			As 
 			$N_{K_1'/\QQ(\sqrt{2})}(\sqrt{\vep_{q}})=  \frac{1}{{2}}(x^2-y^2q)         =(-1)^{\gamma}\in \QQ$ and  $N_{K_1'/\QQ(\sqrt{2})}(\sqrt{\vep_{2q}})=  \frac{1}{{2}}(\alpha^2-\beta^22q)         =(-1)^{\gamma}\in \QQ$,   the previous remark gives:
 			\begin{eqnarray}
 				\left(\frac{\sqrt{\vep_{ q}},\,r}{ \mathcal R}\right)&=& \left(\frac{(-1)^{\gamma} }{r}\right)   ,\\
 				& &\nonumber\\
 				\left(\frac{\sqrt{\vep_{ 2q}},\,r}{ \mathcal R}\right)&=& \left(\frac{(-1)^{\gamma} }{r}\right).
 			\end{eqnarray}
 			Therefore, in this subcase, we have  $\rg(A(K_1))=2-1-e_{K_1/K_1'}=0$ or $1$ according to whether $(r\equiv 7\pmod 8$ and  $q\equiv 3\pmod 8$) or not.
 			
 				\item Assume that $r\equiv 7\pmod 8$ with  $ \left(\frac{q}{r}\right)=1$. Then $r$ is totally decomposed in $K_1'$. 
 			Therefore, in this subcase, we have  $\rg(A(K_1))=3-e_{K_1/K_1'}$. 		Let $\mathcal R$ (resp. $\mathcal R'$) be an ideal  of $K_1'$ above the ideal    $\mathcal R_{\QQ(\sqrt{2})}$ (resp. $\mathcal R'_{\QQ(\sqrt{2})}$)  of $\QQ(\sqrt{2})$ as in Remark \ref{scholsrec}-2), for $p=2$.

 			Thus, we have  
 				
 				\begin{eqnarray}
 					\left(\dfrac{-1,\,r}{ \mathcal R}\right)=\left(\dfrac{-1,\,r}{ \mathcal R'}\right)=-1, \quad	 \left(\frac{\vep_2,\,r}{ \mathcal R}\right) = 1  \text{ and }  \left(\frac{\vep_2,\,r}{ \mathcal R'}\right) = -1.
 				\end{eqnarray}
 				So  $e_{K_1/K_1'}\geq 2$ and $\rg(A(K_1))=3-e_{K_1/K_1'}\leq 1$. Put $M=\QQ(\sqrt{2r},\sqrt{q})$. $K_1/M$ is an unramified quadratic extension and 
 				$  \rg(A(M))=1-e_{M/\QQ(\sqrt{2r})}\leq 1$. So $A(M)$ is cyclic and this implies that $A(K_1)$ is cyclic. By class number formula and Lemma \ref{realQuad}, we have 
 				$h_2(M)=\frac14q(M)h_2(q)h_2(2r)h_2(2qr)=\frac14q(M)h_2(2qr)$. Note that $q(M)=4$  (cf. \cite[Corollaries  2]{CAZbol} and we check similarly that $q(M)=4$ for $q\equiv7\pmod8$).  
 				According to \cite[p. 345]{kaplan76} and \cite[Corollaries 19.7 and 18.4]{connor88}, $h_2(2qr)=2$ if and only if $q\equiv 3\pmod 8$, else $h_2(2qr) $ is divisible by $4$. Therefore $\rg(A(K_1))=0$ if and only if  $q\equiv 3\pmod 8$, else $\rg(A(K_1))=1$.  			It follows that $e_{K_1/K_1'}\in \{2,3\}$, more precisely, $e_{K_1/K_1'}=2$ if and only if  $q\equiv 7\pmod 8$. 
 				
 				\begin{remark}\label{remqym7mod8} Let $u\in\{\sqrt{\vep_{ q}},\sqrt{\vep_{ 2q}} \}$. In the case when  $r\equiv 7\pmod 8$   with $ \left(\frac{q}{r}\right)=1$, it seems that the computation of 
 					$\left(\frac{u,\,r}{ \mathcal R}\right)$ and $\left(\frac{u,\,r}{ \mathcal R'}\right)$ is  difficult. But from the above paragraph 
 					we deduce that 
 					\begin{enumerate}[$a)$]
 						\item If   $q\equiv 3\pmod 8$, then there exist at least  $u\in\{\sqrt{\vep_{ q}},\sqrt{\vep_{ 2q}} \}$ such that 	$\left(\frac{u,\,r}{ \mathcal R}\right)=-1$ or $\left(\frac{u,\,r}{ \mathcal R'}\right)=-1$, and $\overline{u}\not\in \{\overline{-1}, \overline{\vep_{2}}\}$ in the quotient  $E_{K_1'}/(E_{K_1'}\cap N_{K_1/K_1'}(K_1))$. In fact, we have    $e_{K_1/K_1'}=3$ and by the above signs of norm residue symbols  we have $-1$ and $\vep_2$ are not norms and 
 						$\overline{-1}\not= \overline{\vep_{2}}$ in the quotient  $E_{K_1'}/(E_{K_1'}\cap N_{K_1/K_1'}(K_1))$. 
 						Thus, by necessity, there must be at least one other non-trivial class in   $E_{K_1'}/(E_{K_1'}\cap N_{K_1/K_1'}(K_1))$ that is represented by $u\in\{\sqrt{\vep_{ q}},\sqrt{\vep_{ 2q}} \}$. Similarly, we have:

 						\item 	If   $q\equiv 7\pmod 8$, then for all   $u\in\{\sqrt{\vep_{ q}},\sqrt{\vep_{ 2q}} \}$, we have $\overline{u} \in \{\overline{1},\overline{-1}, \overline{\vep_{2}}\}$ in the quotient  $E_{K_1'}/(E_{K_1'}\cap N_{K_1/K_1'}(K_1))$.
 					\end{enumerate}
 				\end{remark}
 			\end{enumerate}
 		\item We shall use the above signs of norm residue symbols.
 		\begin{enumerate}[\rm$\star$]
 			\item Assume that $ \left(\frac{q}{s}\right)=\left(\frac{q}{r}\right)=-1$. We have $\rg(A(K_1))= 4-1-e=3-e_{K_1/K_1'} $ and  
 			\begin{enumerate}[\indent$a)$]
 				
 				\item    If $r\equiv 5\pmod 8$ and $s\equiv 3\pmod 8$, then $\left(\frac{\sqrt{\vep_{2q}},\,r}{ \mathcal R}\right)=\left(\frac{-1}{ r}\right)\left(\frac{ 2}{ r}\right)=-1$, 
 				$\left(\frac{\vep_2,\,s}{ \mathcal S}\right)=(-1)^{\frac{s-1}{2}}=-1$ and 
 				$\left(\frac{\vep_2\sqrt{\vep_{2q}},\,r}{ \mathcal R}\right)= \left(\frac{\vep_2 ,\,r}{ \mathcal R}\right)\left(\frac{ \sqrt{\vep_{2q}},\,r}{ \mathcal R}\right)= -1$. Thus, $e_{K_1/K_1'}\geq 2$ and so $\rg(A(K_1))=1$.

 				\item   If $r\equiv 3\pmod 8$ and $s\equiv 7\pmod 8$, then we have
 				$- \left(\frac{-1,\,r}{ \mathcal R}\right)=	\left(\frac{\vep_2,\,r}{ \mathcal R}\right)=-1$, 
 				$	\left(\frac{\sqrt{\vep_{ q}},\,r}{ \mathcal R}\right)= \left(\frac{(-1)^{\gamma+1} }{r}\right)   $ and 
 				$	\left(\frac{\sqrt{\vep_{2q}},\,r}{ \mathcal R}\right)=\left(\frac{-1}{ r}\right)\left(\frac{ 2}{ r}\right) =1$. 
 				Moreover,		
 				$ \left(\frac{-1,\,s}{ \mathcal S}\right)=	\left(\frac{\vep_2,\,s}{ \mathcal S}\right)=1$,
 				$	\left(\frac{\sqrt{\vep_{ q}},\,s}{ \mathcal S}\right)= \left(\frac{(-1)^{\gamma} }{s}\right)$  
 				and
 				$\left(\frac{\sqrt{\vep_{ 2q}},\,s}{ \mathcal S}\right)= \left(\frac{(-1)^{\gamma} }{s}\right)$. 
 				Therefore,	$e_{K_1/K_1'}\in \{1,2\}$  and $e_{K_1/K_1'}=2$ if and only if  	$q\equiv 3 \pmod 8$.
 				Hence,  $\rg(A(K_1))\in \{1,2\}$  and $\rg(A(K_1))=1$ if and only if  	$q\equiv 3 \pmod 8$.		 	 Similarly, we have:

 				\item   If $r\equiv 5\pmod 8$ and $s\equiv 7\pmod 8$, then we have $\rg(A(K_1))\in \{1,2\}$  and $\rg(A(K_1))=1$ if and only if  	$q\equiv 3 \pmod 8$.
 				
 				\item   If $r\equiv s\equiv 3$ or $5\pmod 8$, then 		 $\rg(A(K_1))=2$.	 		
 				
 				\item   If $r\equiv 3\pmod 8$ and $s\equiv 1\pmod 8$, then we have	$\rg(A(K_1))=2$.
 				
 				\item   If $r\equiv 5\pmod 8$ and $s\equiv 1\pmod 8$, then we have	$\rg(A(K_1))=2$.
 				
 				\item   If $r\equiv 7\pmod 8$ and $s\equiv 7\pmod 8$, then we have $\rg(A(K_1))\in \{2,3\}$  and $\rg(A(K_1))=2$ if and only if  	$q\equiv 3 \pmod 8$.
 				
 				\item   If $r\equiv 7\pmod 8$ and $s\equiv 1\pmod 8$, then we have $\rg(A(K_1))\in \{2,3\}$  and $\rg(A(K_1))=2$ if and only if  	$q\equiv 3 \pmod 8$.
 				
 				\item   If $r\equiv 1\pmod 8$ and $s\equiv 1\pmod 8$, then we have $\rg(A(K_1))=3$.
 			\end{enumerate}

 			\item   Assume that  $ \left(\frac{q}{r}\right)=-\left(\frac{q}{s}\right)=-1$  and $s\not \equiv 1\pmod 8$. Notice that $\rg(A(K_1))=5-e_{K_1/K_1'}$ or $3-e_{K_1/K_1'}$ according to whether $s\equiv 7\pmod 8$ or not. We have:
 			\begin{enumerate}[\indent$a)$]

 				\item  If $r\equiv   s\equiv 7\pmod 8$, then, by using the above symbols, we get $e_{K_1/K_1'}=3$ or $2$, and $e_{K_1/K_1'}=3$ if and only if $q\equiv 3\pmod 8$. Thus, 
 				$\rg(A(K_1))\in\{2,3\}$ and $\rg(A(K_1))=2$ if and only if $q\equiv 3\pmod 8$.

 				\item If  $r\equiv   s\equiv 3\pmod 8$, then $\rg(A(K_1))\in\{1,2\}$ and $\rg(A(K_1))=1$
 				if and only if $q\equiv 7\pmod 8$.
 				
 				\item If  $r\equiv   s\equiv 5\pmod 8$, then  $\rg(A(K_1))=1$.
 				
 				 \item  If $r\equiv   1\pmod 8$ and  $s\equiv   7\pmod 8$, then  
 				$\rg(A(K_1))\in\{2,3\}$ and $\rg(A(K_1))=2$ if and only if $q\equiv 3\pmod 8$.
 				
 				 \item  If $r\equiv   1\pmod 8$ and  $s\equiv   3$ or $5\pmod 8$, then  
 				 $\rg(A(K_1))=2$.  
 				 
 				 \item  If $r\equiv   7\pmod 8$ and  $s\equiv  3$ or $5\pmod 8$, then  
 				 $\rg(A(K_1))\in\{1,2\}$ and $\rg(A(K_1))=1$ if and only if $q\equiv 3\pmod 8$.
 				 
 				  \item  If     $r\equiv  3$ or $5\pmod 8$ and $s\equiv   7\pmod 8$, then  
 				 $\rg(A(K_1))\in\{1,2\}$ and $\rg(A(K_1))=1$ if and only if $q\equiv 3\pmod 8$.
 				
 				\item   If  $(r\equiv     3\pmod 8$ and  $   s\equiv 5\pmod 8)$ or $(r\equiv     5\pmod 8$ and  $    s\equiv 3\pmod 8)$, then   $\rg(A(K_1))=1$.
 				
 			\end{enumerate}

 			\item   Assume that  $ \left(\frac{q}{r}\right)=\left(\frac{q}{s}\right)=1$. 	
 			\begin{enumerate}[\indent$a)$]
 				\item   If $r\equiv 7\pmod 8$ and $s\equiv 3$ or $5\pmod 8$, then $\rg(A(K_1))\in\{1,2\}$ and $\rg(A(K_1))=1$
 				if and only if $q\equiv 3\pmod 8$.

 				\item   If $r\equiv 3\pmod 8$ and $s\equiv 5\pmod 8$, then  $\rg(A(K_1))=1$.
 				\item    If $r\equiv  s\equiv 7\pmod 8$, then    by Remark \ref{remqym7mod8},  we have   $\rg(A(K_1))\in\{3,4\}$. 
 			\end{enumerate}
 			
 		\end{enumerate}
 		
 	\end{enumerate}	
 	This completes the proof.	
 \end{proof}

 {
 \begin{lemma}\label{lmthreeprimes}   Let  $q\equiv 3\pmod4$ be a prime number and   $d=rst$ be a squarefree integer  such that $r$, $s$ and $t$ are three different prime numbers. Let $K= \QQ(\sqrt{q},\sqrt{d})$  be such that 
 	$\rg(K)=2$ $($cf. Lemma \ref{lemmaqA}-(3)$)$. We have, one of the following cases:  
 	 \begin{enumerate}[$1)$]
 	 	\item $d=rst$, with $r\equiv 3  \pmod 8$ and $s\equiv  5\pmod 8$  and $\left(\frac{q}{r}\right)=\left(\frac{q}{s}\right)=-\left(\frac{q}{t}\right)=1$. In this case, $\rg(A(K_1))\in\{2,3\} $, more precisely, 
 	 	$\rg(A(K_1))=2$ if and only if we have one of the following conditions:
 	 	\begin{enumerate}[$\star$]
 	 		\item $t\equiv 5  \pmod 8$,
 	 		
 	 		\item $t\equiv 3  \pmod 8$ and $q\equiv 7\pmod 8$,
 	 		
 	 		\item $t\equiv 7  \pmod 8$ and $q\equiv 3\pmod 8$.

 	 	\end{enumerate}
 	 	\item  	$d=rst$, with  $r\equiv 3  \pmod 8$, $s\equiv  7\pmod 8$ and $\left(\frac{q}{r}\right)=\left(\frac{q}{s}\right)=-\left(\frac{q}{t}\right)=1$. In this case, $\rg(A(K_1))\in\{3,4\} $, more precisely, $\rg(A(K_1))=4$ if and only if we have one of the following conditions:
 	 	\begin{enumerate}[$\star$]
 	 		\item  $t\equiv 1  \pmod 8$ and $q\equiv 7\pmod 8$,
 	 		
 	 		\item  $t\equiv 7  \pmod 8$ and $q\equiv 7\pmod 8$.
 	 		
 	 	\end{enumerate}

 	 	\item  $d=rst$, with  $r\equiv 7  \pmod 8$  and $-\left(\frac{q}{r}\right)=\left(\frac{q}{s}\right)=\left(\frac{q}{t}\right)=-1$. In this case, $\rg(A(K_1))\in\{3,4,5\} $, more precisely, $\rg(A(K_1))=5$  if and only if, after a suitable permutation of $s$ and $t$, we have one of the following conditions:
 	 	\begin{enumerate}[$\star$]
 	 		\item  $s\equiv 1  \pmod 8$, $t\equiv 1  \pmod 8$ and $q\equiv 7\pmod 8$,
 	 		
 	 		\item  $s\equiv7  \pmod 8$, $t\equiv7  \pmod 8$ and $q\equiv 7\pmod 8$.
 	  	\end{enumerate}
 	 	{\bf Moreover,}  	$\rg(A(K_1))=3$  if and only if, after a suitable permutation of $s$ and $t$, we have one of the following conditions:
 	 	\begin{enumerate}[$\star$]
 	 		\item $s\equiv 3  \pmod 8$, $t\equiv 3  \pmod 8$ and $q\equiv 3\pmod 8$,
 	 		
 	 		\item $s\equiv 5  \pmod 8$, $t\equiv 5  \pmod 8$ and $q\equiv 3\pmod 8$,
 	 		\item $s\equiv 3  \pmod 8$, $t\equiv 5  \pmod 8$ and $q\equiv 3\pmod 8$,
 	 		\item $s\equiv 7  \pmod 8$, $t\equiv 5  \pmod 8$ and $q\equiv 3\pmod 8$,
 	 		\item $s\equiv 1  \pmod 8$, $t\equiv 5  \pmod 8$ and $q\equiv 3\pmod 8$,
 	 		\item $s\equiv 7  \pmod 8$, $t\equiv 3  \pmod 8$ and $q\equiv 3\pmod 8$,
 	 		\item $s\equiv 1  \pmod 8$, $t\equiv 3  \pmod 8$ and $q\equiv 3\pmod 8$.
 	 	\end{enumerate}

 	 	\item  $d=rst$, with  $r\equiv 5  \pmod 8$  and $-\left(\frac{q}{r}\right)=\left(\frac{q}{s}\right)=\left(\frac{q}{t}\right)=-1$. In this case, $\rg(A(K_1))\in\{2,3,4\} $, more precisely, $\rg(A(K_1))=2$ if and only if, after a suitable permutation of $s$ and $t$, we have one of the following conditions:  
 	 	\begin{enumerate}[$\star$]
 	 		\item  $s\equiv 3  \pmod 8$ and  $t\equiv 5  \pmod 8$,
 	 		
 	 		\item   $s\equiv 7  \pmod 8$,   $t\equiv 3  \pmod 8$ and $q\equiv 3\pmod 8$.
 	 	\end{enumerate}
 	 	{\bf Moreover,} 	$\rg(A(K_1))=4$ if and only if, after a suitable permutation of $s$ and $t$, we have one of the following conditions:
 	 	\begin{enumerate}[$\star$]
 	 		\item $s\equiv 1  \pmod 8$ and  $t\equiv 1  \pmod 8$,
 	 		
 	 		\item  $s\equiv 7  \pmod 8$, $t\equiv 1  \pmod 8$ and $q\equiv 7\pmod 8$,
 	 		
 	 		\item  $s\equiv 7  \pmod 8$, $t\equiv 7  \pmod 8$ and $q\equiv 7\pmod 8$.
 	 	\end{enumerate}

 	 	\item  $d=rst$, with  $r\equiv 3  \pmod 8$  and $-\left(\frac{q}{r}\right)=\left(\frac{q}{s}\right)=\left(\frac{q}{t}\right)=-1$. In this case, $\rg(A(K_1))\in\{2,3,4\} $, more precisely, 
 	 	$\rg(A(K_1))=2$ if and only if, after a suitable permutation of $s$ and $t$, we have one of the following conditions:  
 	 	\begin{enumerate}[$\star$]
 	 		\item  $s\equiv 3  \pmod 8$, $t\equiv 5  \pmod 8$ and $q\equiv 7\pmod 8$,
 	 		
 	 		\item    $s\equiv 5  \pmod 8$, $t\equiv 7  \pmod 8$ and $q\equiv 3\pmod 8$.
 	 	\end{enumerate}
 	 	{\bf Moreover,} 	$\rg(A(K_1))=4$  if and only if, after a suitable permutation of $s$ and $t$, we have one of the following conditions:
 	 	\begin{enumerate}[$\star$]
 	 		\item   $s\equiv 1  \pmod 8$  and $t\equiv 1\pmod 8$,
 	 		
 	 		\item  $s\equiv 7  \pmod 8$, $t\equiv 7  \pmod 8$ and $q\equiv 7\pmod 8$,
 	 		
 	 		\item  $s\equiv 7  \pmod 8$, $t\equiv 1  \pmod 8$ and $q\equiv 7\pmod 8$,
 	 		
 	 		\item   $s\equiv 3  \pmod 8$, $t\equiv 3  \pmod 8$ and $q\equiv 3\pmod 8$.
 	 	\end{enumerate}
 	 		\item $d=rst$, with    $\left(\frac{q}{r}\right)=\left(\frac{q}{s}\right)= \left(\frac{q}{t}\right)=-1$. In this case, $\rg(A(K_1))\in\{2,3,4,5\} $ and  $\rg(A(K_1))=2$ if and only if, after a suitable permutation of $r$, $s$ and $t$, we have $r\equiv 3\pmod 8$, $s\equiv 5\pmod 8$, $t\equiv 7\pmod 8$ and $q\equiv 3\pmod 8$.
 	 \end{enumerate}
 \end{lemma}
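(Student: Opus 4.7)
The strategy is identical in spirit to that of Lemma~\ref{realTruquad2-rankq}, so I will describe how to set it up and where the new difficulties lie. Since $K_1'=\QQ(\sqrt{q},\sqrt{2})$ has odd class number (Lemma~\ref{realBiquad}) and $K_1/K_1'$ is a QO-extension, the ambiguous class number formula (Lemma~\ref{AmbiguousClassNumberFormula}) gives
\[
\rg(A(K_1))=t_{K_1/K_1'}-1-e_{K_1/K_1'},
\]
with $t_{K_1/K_1'}$ determined by the splitting of $r,s,t$ in $K_1'$, and $e_{K_1/K_1'}$ read from the matrix of norm residue symbols $\left(\frac{u,\ell}{\mathcal L}\right)$ where $u$ ranges over the generating units $\{-1,\vep_2,\sqrt{\vep_q},\sqrt{\vep_{2q}}\}$ of $E_{K_1'}$ and $\ell\in\{r,s,t\}$. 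For the prime count: $\ell$ contributes $1$ to $t_{K_1/K_1'}$ when $\ell$ is inert or ramified in $K_1'$, and contributes $2$ when $\left(\frac{q}{\ell}\right)=\left(\frac{2}{\ell}\right)=1$ or, more generally, when $\ell$ decomposes into two primes in $K_1'$. This already pins down $t_{K_1/K_1'}$ in all six sub-cases (1)--(6) from the congruence classes of $r,s,t$ mod $8$ and from the Legendre symbols $\left(\frac{q}{r}\right)$, $\left(\frac{q}{s}\right)$, $\left(\frac{q}{t}\right)$.

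For the index $e_{K_1/K_1'}$, I would simply reuse the symbol computations already carried out in the proof of Lemma~\ref{realTruquad2-rankq}: namely the six formulas $(1)$--$(6)$ derived there for $\left(\frac{\vep_2,\ell}{\mathcal L}\right)$, $\left(\frac{\sqrt{\vep_q},\ell}{\mathcal L}\right)$, $\left(\frac{\sqrt{\vep_{2q}},\ell}{\mathcal L}\right)$ in terms of $\left(\frac{-1}{\ell}\right)$, $\left(\frac{2}{\ell}\right)$ and $(-1)^\gamma$, together with the formulas at a split prime when $\ell\equiv 7\pmod 8$, $\left(\frac{q}{\ell}\right)=1$. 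Plugging these into the $4\times t_{K_1/K_1'}$ matrix of norm residues and computing its $\FF_2$-rank gives $e_{K_1/K_1'}$ directly, hence $\rg(A(K_1))$. Each of the cases (1)--(6) then unfolds by branching on $r\pmod 8$, $s\pmod 8$, $t\pmod 8$ (with the prescribed congruence on one of them) and $q\pmod 8$; routine linear algebra over $\FF_2$ produces the stated value of $\rg(A(K_1))$ and the list of sub-cases $\C1,\C2,\dots$ where the extremal rank is attained.

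The genuine obstacle is exactly the phenomenon isolated in Remark~\ref{remqym7mod8}: when some prime $\ell\in\{r,s,t\}$ satisfies $\ell\equiv 7\pmod 8$ and $\left(\frac{q}{\ell}\right)=1$, the individual norm residue symbols $\left(\frac{\sqrt{\vep_q},\ell}{\mathcal L}\right)$, $\left(\frac{\sqrt{\vep_{2q}},\ell}{\mathcal L}\right)$ cannot be evaluated by straightforward manipulation, so a direct rank computation is unavailable. I would handle this exactly as in Lemma~\ref{realTruquad2-rankq}: choose an intermediate real biquadratic field $M$ with $K_1/M$ unramified (for instance $M=\QQ(\sqrt{2\ell},\sqrt{q})$ or a variant adapted to the sub-case), compute $\rg(A(M))$ via the ambiguous class number formula there, and then apply Wada's class number formula (Lemma~\ref{wada's f.}) together with the known $2$-class numbers of the real quadratic sub-fields (via \cite{kaplan76} and \cite[Cor.~18.4, 19.7]{connor88}) to pin down $h_2(M)$, which in turn forces $\rg(A(K_1))$ through the unramified quadratic extension $K_1/M$. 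This is the only step that requires genuine auxiliary input beyond the symbol computations of Lemma~\ref{realTruquad2-rankq}.

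Once these tools are in place, the proof consists in running the case analysis mechanically. The bookkeeping is heavy but each individual case is short: identify $t_{K_1/K_1'}$, evaluate the relevant subset of the symbol formulas $(1)$--$(6)$, read off $e_{K_1/K_1'}$, and deal with any $\ell\equiv 7\pmod 8$, $\left(\frac{q}{\ell}\right)=1$ factor through the auxiliary $M$ computation just described. Collecting the outputs produces exactly the six cases and their sub-conditions $\C1,\C2,\dots$ listed in the statement.
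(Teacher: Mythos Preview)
Your overall approach is correct and matches the paper's: compute $\rg(A(K_1))=t_{K_1/K_1'}-1-e_{K_1/K_1'}$ over $K_1'=\QQ(\sqrt q,\sqrt 2)$, recycle the norm residue symbol values from the proof of Lemma~\ref{realTruquad2-rankq}, and read off $e_{K_1/K_1'}$ as the $\FF_2$-rank of the resulting matrix. The paper does exactly this, tabulating the symbols explicitly for case~(1) and handling the remaining cases by ``proceeding in the same way''.

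Two slips to flag. First, your prime count is off by a factor of two: an odd prime $\ell\nmid 2q$ is never inert in the biquadratic field $K_1'$ (the decomposition group would have to be cyclic of order~$4$ inside $(\ZZ/2\ZZ)^2$), so each such $\ell$ contributes $2$ primes to $t_{K_1/K_1'}$ in the generic case and $4$ when $\ell$ is totally split (i.e.\ $\left(\frac{2}{\ell}\right)=\left(\frac{q}{\ell}\right)=1$). In case~(1), for instance, $t_{K_1/K_1'}=6$, giving $\rg(A(K_1))=5-e_{K_1/K_1'}$.

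Second, your fallback for a prime $\ell\equiv 7\pmod 8$ with $\left(\frac{q}{\ell}\right)=1$---pass to an auxiliary biquadratic $M=\QQ(\sqrt{2\ell},\sqrt q)$ with $K_1/M$ unramified quadratic---does not work here: when $d=rst$ we have $\sqrt{\ell}\notin K_1$, so this $M$ is not a subfield of $K_1$, and no biquadratic subfield of $K_1$ isolates a single prime factor of $d$. What actually rescues you is that for $\mathcal L\mid\ell$ in $K_1'$ the symbol $\left(\frac{u,rst}{\mathcal L}\right)$ equals $\left(\frac{u,\ell}{\mathcal L}\right)$ (the other factors being units at $\mathcal L$), so the column of symbols at $\ell$ is identical to the one computed in the one-prime setting. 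Hence the conclusions of Remark~\ref{remqym7mod8} about the classes of $\sqrt{\vep_q},\sqrt{\vep_{2q}}$ relative to $-1,\vep_2$ apply verbatim and suffice to determine $e_{K_1/K_1'}$ in cases~(2) and~(3); this is what the paper implicitly uses.
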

 \begin{proof}
Let  $K_1'=\QQ(\sqrt{q},\sqrt{2}) $. Let us keep the same notations as in the proof of the previous lemma. We have  $\rg(A(K_1))=t_{K_1/K_1'}-1-e_{K_1/K_1'}=5-e_{K_1/K_1'}$.
 \begin{enumerate}[$\bullet$]
	\item Assume that $r\equiv 3  \pmod 8$, $s\equiv  5\pmod 8$  and $\left(\frac{q}{r}\right)=\left(\frac{q}{s}\right)=-\left(\frac{q}{t}\right)=1$.
	By the signs of the norm residue symbols    in the proof of   the previous lemma and the product formula, we have:
 	{\footnotesize$$\begin{array}{l|l|l|lll}
 			& &t\equiv 3\text{ or }5 \pmod 8 &t\equiv 1\text{ or }7 \pmod 8\\
		\left(\frac{{-1},\,d}{ \mathcal R}\right)=1&	\left(\frac{{-1},\,d}{ \mathcal S}\right)=1&  	\left(\frac{{-1},\,d}{ \mathcal T}\right)=1 &	\left(\frac{{-1},\,d}{ \mathcal T}\right)=1 \\
		\left(\frac{{\vep_{2}},\,d}{ \mathcal R}\right)= -1   &	\left(\frac{{\vep_{2}},\,d}{ \mathcal S}\right)= 1 & \left(\frac{{\vep_{2}},\,d}{ \mathcal T}\right)= \left(\frac{-1 }{t}\right) & \left(\frac{{\vep_{2}},\,d}{ \mathcal T}\right)=1 \\
		\left(\frac{\sqrt{\vep_{ q}},\,d}{ \mathcal R}\right)= 1   &	\left(\frac{\sqrt{\vep_{ q}},\,d}{ \mathcal S}\right)= -1  &  \left(\frac{\sqrt{\vep_{ q}},\,d}{ \mathcal T}\right)= \left(\frac{(-1)^{\gamma+1} }{t}\right) &\left(\frac{\sqrt{\vep_{ q}},\,d}{ \mathcal T}\right)= \left(\frac{(-1)^{\gamma} }{t}\right) \\
		
		\left(\frac{\sqrt{\vep_{2q}},\,d}{ \mathcal R}\right)= \left(\frac{(-1)^{\gamma+1} }{r}\right)   &	\left(\frac{\sqrt{\vep_{2q}},\,d}{ \mathcal S}\right)= 1  &  \left(\frac{\sqrt{\vep_{2q}},\,d}{ \mathcal T}\right)= \left(\frac{-1 }{t}\right)\left(\frac{2}{t}\right)&\left(\frac{\sqrt{\vep_{2q}},\,d}{ \mathcal T}\right)= \left(\frac{(-1)^{\gamma} }{t}\right) \\
	\end{array}$$}
 	where $\mathcal R$ (resp. $\mathcal S$, $\mathcal T$) is a prime ideal of $K_1'$ above $r$ (resp. $s$, $t$).
	\begin{enumerate}[$\star$]
	\item If $t\equiv 5\pmod 8$, then using the above norm residue symbols we check that 
	$E_{K_1'}/(E_{K_1'}\cap N_{K_1/K_1'}(K_1))
	=\{\overline{1},\overline{\vep_{2}},\overline{\sqrt{\vep_{q}}},\overline{\sqrt{\vep_{2q}}}, \overline{\sqrt{\vep_{q}\vep_{2q}}},\overline{\vep_{2}\sqrt{\vep_{q}}}, \overline{\vep_{2}\sqrt{\vep_{2q}}}, \overline{\vep_{2}\sqrt{\vep_{q}\vep_{2q}}}\}$ and $e_{K_1/K_1'}=3$. Thus, $ \rg(A(K_1))=2$.

\item If $t\equiv 3  \pmod 8$, then  $ \left(\frac{\sqrt{\vep_{2q}},\,d}{ \mathcal T}\right)=1 $ and 
 $ \left(\frac{\sqrt{\vep_{2q}},\,d}{ \mathcal R}\right)=-1$ or $1$ according to whether $q\equiv 7\pmod 8$ or not. Thus, $\sqrt{\vep_{2q}}$ is not norm if and only if $q\equiv 7\pmod 8$ and  we have
 $E_{K_1'}/(E_{K_1'}\cap N_{K_1/K_1'}(K_1))
 =\{\overline{1},\overline{\vep_{2}},\overline{\sqrt{\vep_{q}}},\overline{\sqrt{\vep_{2q}}}, \overline{\sqrt{\vep_{q}\vep_{2q}}},$ $\overline{\vep_{2}\sqrt{\vep_{q}}}, \overline{\vep_{2}\sqrt{\vep_{2q}}}, \overline{\vep_{2}\sqrt{\vep_{q}\vep_{2q}}}\}$ 
 or $\{\overline{1},\overline{\vep_{2}},\overline{\sqrt{\vep_{q}}},   \overline{\vep_{2}\sqrt{\vep_{q}}}     \}$ according to whether $q\equiv 7\pmod 8$ or not.
  Therefore, $e_{K_1/K_1'}=3$ or $2$ according to whether $q\equiv 7\pmod 8$ or not.
  Hence, $ \rg(A(K_1))\in \{2,3\}$ and $  \rg(A(K_1))=2$ if and only if $q\equiv 7\pmod 8$. Similarly, we check that  we have:

\item If  $t\equiv 7  \pmod 8$, then $ \rg(A(K_1))\in \{2,3\}$ and $  \rg(A(K_1))=2$ if and only if $q\equiv 3\pmod 8$. 
\item If  $t\equiv 1 \pmod 8$, then $ \rg(A(K_1))=3$.
	 	 \end{enumerate}
 \item	 Proceeding in the same way, we check the remaining cases. As, under the conditions of the last item,   we have many possibilities for the congruence of $r$, $s$ and $t$ modulo $8$, we limit ourselves to check that $\rg(A(K_1))\in\{2,3,4,5\} $ and  $\rg(A(K_1))=2$ if and only if, after a suitable permutation of $r$, $s$ and $t$, we have $r\equiv 3\pmod 8$, $s\equiv 5\pmod 8$, $t\equiv 7\pmod 8$ and $q\equiv 3\pmod 8$. 
  \end{enumerate}
 \end{proof}

\begin{remark} Let $K$ be a real biquadratic field of the form $A)$ with  $K\not= L$.
	By combining  Lemma   \ref{realTruquad2-rankq}, Lemma \ref{lmthreeprimes}, Lemma \ref{lemmaqA}  and  Lemma \ref{lm fukuda}, we get that $ \rg(A(K_\infty))\leq 2$ and $ \rg(A(K_\infty))=\rg( A(K))$ if and only if $K$ is taking one of the forms in  the items $1)$, $2)$,  $3)$,  $4)$, $5)$, $6)$, $7)$ and   $9)$ of the main theorem. 
\end{remark}

\begin{lemma}\label{lemarst2q}
Let  $q\equiv3\pmod4$ be a prime number and   $d=\delta  rst  \equiv 1 \pmod 4$   be a squarefree integer, with $r$, $s$ and $t$ are prime numbers and $\delta\in\{1,q\}$. Let   $K:= \QQ(\sqrt{2q},\sqrt{d})$   be such that 
$\rg(K)=2$ $($cf. Lemma \ref{lemma2qB}-(3)$)$. Then $\rg(K_1)=2$ if and only if   $d$ is in one of the following forms: 
 \begin{enumerate}[$1)$]
	\item   $d=\delta  rst$, with  $r\equiv 3\pmod 8$, $s\equiv 5\pmod 8$  and we have one of the following conditions:
  	\begin{enumerate}[$a)$]
	
	 	\item  $t\equiv 3 \pmod 8$, $q\equiv 7\pmod 8$ and $-\left(\frac{q}{r}\right)=-\left(\frac{q}{s}\right)=\left(\frac{q}{t}\right)=1$.
	 
	 	 	\item  $t\equiv  5\pmod 8$ and $-\left(\frac{q}{r}\right)=-\left(\frac{q}{s}\right)=\left(\frac{q}{t}\right)=1$.
	 
	 	\item  $t\equiv 7\pmod 8$, $q\equiv 3\pmod 8$ and $\left(\frac{q}{r}\right)=\left(\frac{q}{s}\right)=\left(\frac{q}{t}\right)=-1$.
	
 	\end{enumerate}
Notice that in this case, $d$ satisfies the condition $\C1$ of Lemma \ref{lemma2qB}-$(3)$.

	\item $d=\delta  rst$, with $q$, $r$, $s$, $t$ satisfy one of the following conditions:
 \begin{enumerate}[$a)$]
	
 \item   $r\equiv 3 \pmod 8$, $s\equiv 3\pmod 8$, $t\equiv 5\pmod 8$, $q\equiv 7\pmod 8$ and $-\left(\frac{q}{r}\right)=\left(\frac{q}{s}\right)=\left(\frac{q}{t}\right)=1$,

\item  $r\equiv 3 \pmod 8$,  $s\equiv 7\pmod 8$, $t\equiv 5 \pmod 8$, $q\equiv 3\pmod 8$ and $-\left(\frac{q}{r}\right)=-\left(\frac{q}{s}\right)=\left(\frac{q}{t}\right)=1$,

 \item  $r\equiv 5 \pmod 8$,   $s\equiv 3\pmod 8$, $t\equiv 5\pmod 8$ and $-\left(\frac{q}{r}\right)=\left(\frac{q}{s}\right)=\left(\frac{q}{t}\right)=1$,

 \item  $r\equiv 5 \pmod 8$,   $s\equiv 7\pmod 8$, $t\equiv  3\pmod 8$, $q\equiv  3\pmod 8$ and $-\left(\frac{q}{r}\right)=-\left(\frac{q}{s}\right)=\left(\frac{q}{t}\right)=1$,

\end{enumerate}
Notice that in this case, $d$ satisfies the condition $\C2$ of Lemma \ref{lemma2qB}-$(3)$.

	\item 	$d=\delta  rst$, with    $r\equiv 3 \pmod 8$, $s\equiv 7\pmod 8$, $t\equiv 5 \pmod 8$, $q\equiv  3\pmod 8$ and $-\left(\frac{q}{r}\right)= \left(\frac{q}{s}\right)=-\left(\frac{q}{t}\right)=-1$.

\noindent Notice that in this case, $d$ satisfies the condition $\C3$ of Lemma \ref{lemma2qB}-$(3)$.





	 \end{enumerate} 
\end{lemma}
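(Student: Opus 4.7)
The plan is to mirror the case-by-case analysis already carried out in the proof of Lemma \ref{lmthreeprimes}. Set $K_1' = \QQ(\sqrt{q},\sqrt{2})$, which is a QO-field by Lemma \ref{realBiquad}, so $K_1/K_1'$ is a QO-extension with unit group $E_{K_1'} = \langle -1,\vep_2,\sqrt{\vep_q},\sqrt{\vep_{2q}}\rangle$. Since $\sqrt{2q}=\sqrt{2}\cdot\sqrt{q}$ inside $K_1$, and (when $\delta=q$) $\sqrt{qrst}\cdot\sqrt{q}=q\sqrt{rst}$, we have $K_1=\QQ(\sqrt{q},\sqrt{2},\sqrt{rst})$ in every subcase of the lemma. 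In particular, $K_1$ is exactly the triquadratic field treated in Lemma \ref{lmthreeprimes}, and all the norm residue symbol computations collected in the proofs of Lemmas \ref{realTruquad2-rankq} and \ref{lmthreeprimes} apply verbatim.

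The key step is the translation of hypotheses. The conditions $\mathrm{C1}, \mathrm{C2}, \mathrm{C3}$ of Lemma \ref{lemma2qB}-(3) are stated through the symbols $\left(\tfrac{2q}{\cdot}\right)$, whereas the tables used in the cited proofs are indexed by $\left(\tfrac{q}{\cdot}\right)$ together with the residue of the prime modulo $8$. The identity $\left(\tfrac{2q}{p}\right)=\left(\tfrac{2}{p}\right)\left(\tfrac{q}{p}\right)$, combined with the constraint $\delta rst\equiv 1\pmod 4$ (where $\delta\in\{1,q\}$ and $q\equiv 3\pmod 4$), reduces each of $\mathrm{C1}, \mathrm{C2}, \mathrm{C3}$ to a finite list of configurations, each specifying the residues of $r,s,t$ modulo $8$, the triple $\left(\tfrac{q}{r}\right),\left(\tfrac{q}{s}\right),\left(\tfrac{q}{t}\right)$, and, when relevant, the residue of $q$ modulo $8$. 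A direct splitting computation shows that in every configuration permitted by the hypotheses, each of $r,s,t$ contributes exactly two primes of $K_1'$, so Lemma \ref{AmbiguousClassNumberFormula} yields
$$\rg(A(K_1)) = 6 - 1 - e_{K_1/K_1'} = 5 - e_{K_1/K_1'}.$$

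For each configuration produced in this way, one reads off $e_{K_1/K_1'} = \log_2[E_{K_1'}:E_{K_1'}\cap N_{K_1/K_1'}(K_1^\ast)]$ from the norm residue symbol tables of the cited proofs, and retains only the configurations giving $e_{K_1/K_1'} = 3$, i.e.\ $\rg(A(K_1))=2$. These surviving configurations should match exactly the three families listed in items $1)$, $2)$, $3)$ of the statement, grouped by the hypothesis $\mathrm{C1}$, $\mathrm{C2}$, or $\mathrm{C3}$ of Lemma \ref{lemma2qB}-(3) from which they descend.

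The main obstacle is combinatorial: the translation of the $\left(\tfrac{2q}{\cdot}\right)$-hypotheses, combined with the parity constraint $\delta rst\equiv 1\pmod 4$, produces a sizeable case split that has to be carried out without missing any configuration. A secondary difficulty, already flagged in Remark \ref{remqym7mod8}, arises whenever one of $r,s,t$ is $\equiv 7\pmod 8$ with $\left(\tfrac{q}{\cdot}\right)=1$: in those configurations the norm residue symbols of $\sqrt{\vep_q}$ and $\sqrt{\vep_{2q}}$ are not accessible by a direct computation, and $e_{K_1/K_1'}$ must be pinned down indirectly by computing the $2$-class number of an auxiliary real biquadratic subfield via Lemma \ref{wada's f.} and the classical tables of \cite{kaplan76} and \cite{connor88}, exactly as was done in the analogous step of the proof of Lemma \ref{realTruquad2-rankq}.
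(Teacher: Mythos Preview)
Your overall plan is the same as the paper's: translate the three conditions $\mathrm{C1},\mathrm{C2},\mathrm{C3}$ of Lemma~\ref{lemma2qB}-(3) from $\left(\tfrac{2q}{\cdot}\right)$-data into $\left(\tfrac{q}{\cdot}\right)$-data together with residues modulo $8$, and then invoke the case analysis of Lemma~\ref{lmthreeprimes}. The paper does exactly this, citing items $4)$, $5)$, $6)$ of Lemma~\ref{lmthreeprimes} for the $\mathrm{C1}$ cases and redoing the norm residue computations for the configurations under $\mathrm{C2}$ and $\mathrm{C3}$ that are not already covered.

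There is one concrete error in your sketch. The claim that ``in every configuration permitted by the hypotheses, each of $r,s,t$ contributes exactly two primes of $K_1'$'' is false. A prime $p\in\{r,s,t\}$ splits completely in $K_1'=\QQ(\sqrt q,\sqrt 2)$ precisely when $p\equiv \pm1\pmod 8$ and $\left(\tfrac{q}{p}\right)=1$, and this does occur under the hypotheses: for instance, in $\mathrm{C2}$ with $r\equiv 7\pmod 8$ one has $\left(\tfrac{2}{r}\right)=1$ and $\left(\tfrac{2q}{r}\right)=1$, hence $\left(\tfrac{q}{r}\right)=1$, so $r$ contributes four primes. Then $t_{K_1/K_1'}\geq 8$ and your formula $\rg(A(K_1))=5-e_{K_1/K_1'}$ undercounts; in fact $\rg(A(K_1))\geq 3$ in every such subcase (this is item~$2)$ and item~$3)$ of Lemma~\ref{lmthreeprimes}), which is why none of them appear in the final list. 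Your last paragraph flags exactly these configurations as problematic, but for the wrong reason: the obstruction is not in evaluating the residue symbols of $\sqrt{\vep_q},\sqrt{\vep_{2q}}$, it is that the ramification count $t_{K_1/K_1'}$ jumps. Once you correct this and allow $t_{K_1/K_1'}$ to vary with the configuration, the rest of your plan goes through and matches the paper's argument.
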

\begin{proof}We proceed similarly as in the proof of the previous lemma.
 \begin{enumerate}[$1)$]
	\item Assume that $r$, $s$, $t$ and $q$ satisfy the condition  $\C1$ of  Lemma \ref{lemma2qB}-$(3)$, i.e.,   
	 $\left(\frac{2q}{r}\right)=\left(\frac{2q}{s}\right)=-\left(\frac{2q}{t}\right)=1$,   $\left(\frac{-1}{r}\right)=\left(\frac{2}{r}\right)=-1$ and 
	$\left(\frac{-1}{s}\right)\not=\left(\frac{2}{s}\right) $. Notice that last two conditions are equivalent to the fact that $r\equiv 3\pmod 8$ and $s\equiv 5\pmod8$. Thus, $\C1$ is equivalent to the fact that we have one of the following conditions:
		\begin{enumerate}[$a)$]
	 	\item  $t\equiv 3$ or $5\pmod 8$  and $-\left(\frac{q}{r}\right)=-\left(\frac{q}{s}\right)=\left(\frac{q}{t}\right)=1$.
		
		\item  $t\equiv 7$ ot $1\pmod 8$  and $\left(\frac{q}{r}\right)=\left(\frac{q}{s}\right)=\left(\frac{q}{t}\right)=-1$.
	\end{enumerate}
	So according to items $4)$, $5)$ (here, a suitable permutation $r$ and $t$ is taken into account) and $6)$ of Lemma \ref{lmthreeprimes}, $\rg(K_1)=2$ if and only if  $q$, $r$, $s$ and $t$ satisfy one of the conditions in the first item.

	\item Assume that $r$, $s$, $t$ and $q$ satisfy the condition  $\C2$ of  Lemma \ref{lemma2qB}-$(3)$, i.e.,  $\left(\frac{2q}{r}\right)=-\left(\frac{2q}{s}\right)=-\left(\frac{2q}{t}\right)=1$ and  $[  \left(\frac{-1}{r}\right)=\left(\frac{2}{r}\right)=-1 $ or   $ \left(\frac{-1}{r}\right)\not=\left(\frac{2}{r}\right) ]$. Notice that last   condition is equivalent to the fact that $r\equiv 3$, $5$ or $7\pmod 8$.

	\begin{enumerate}[$\bullet$]
	\item Let $r\equiv 3$ or  $5 \pmod 8$. Under these conditions, $\C2$ is equivalent to the fact that $s$ and $t$ satisfy one of the following conditions:
	 \begin{enumerate}[$a)$]
		\item   $s\equiv 3\pmod 8$, $t\equiv 5\pmod 8$ and $-\left(\frac{q}{r}\right)=\left(\frac{q}{s}\right)=\left(\frac{q}{t}\right)=1$,
		
		\item   $s\equiv 1\pmod 8$, $t\equiv 1\pmod 8$ and $-\left(\frac{q}{r}\right)=-\left(\frac{q}{s}\right)=-\left(\frac{q}{t}\right)=1$,
		
		\item   $s\equiv 7\pmod 8$, $t\equiv 1\pmod 8$ and $-\left(\frac{q}{r}\right)=-\left(\frac{q}{s}\right)=-\left(\frac{q}{t}\right)=1$,

		 \item   $s\equiv 7\pmod 8$, $t\equiv 7\pmod 8$ and $-\left(\frac{q}{r}\right)=-\left(\frac{q}{s}\right)=-\left(\frac{q}{t}\right)=1$,
		 
		 \item   $s\equiv 7\pmod 8$, $t\equiv 5$ or $3\pmod 8$ and $-\left(\frac{q}{r}\right)=-\left(\frac{q}{s}\right)=\left(\frac{q}{t}\right)=1$,
		
	 	\item   $s\equiv 1\pmod 8$, $t\equiv 5$ or $3\pmod 8$ and $-\left(\frac{q}{r}\right)=-\left(\frac{q}{s}\right)=\left(\frac{q}{t}\right)=1$,
		
		 \item   $s\equiv t\equiv 3 \pmod 8$ and $-\left(\frac{q}{r}\right)=\left(\frac{q}{s}\right)=\left(\frac{q}{t}\right)=1$,
		 
		 \item   $s\equiv t\equiv 5 \pmod 8$ and $-\left(\frac{q}{r}\right)=\left(\frac{q}{s}\right)=\left(\frac{q}{t}\right)=1$.

	\end{enumerate}
Assume that $r\equiv 3\pmod 8$. Using Lemma   \ref{lmthreeprimes}, we check that if $s$ and $t$ satisfy one of the conditions $a)$, ..., $c)$, 
then  $\rg(K_1)=2$ if and only if $s$, $t$ and $q$ satisfy one of the conditions $a)$ and $b)$ in the second item of the lemma. Moreover, proceeding as in the proof of Lemma   \ref{lmthreeprimes},  we check that if  $s$ and $t$ satisfy one of the conditions $g)$ and $h)$, then $\rg(A(K))	\geq 3$.

Now assume that $r\equiv 5\pmod 8$. Proceeding similarly as in the previous case, we get  $\rg(K_1)=2$ if and only if $s$, $t$ and $q$ satisfy one of the conditions $a)$ and $b)$ in the third item.

	\item	Now let $r\equiv 7  \pmod 8$. Under this condition, $\C2$ is equivalent to the fact that $s$ and $t$ satisfy one of the following conditions:
	\begin{enumerate}[$a)$]
		\item   $s\equiv 3\pmod 8$, $t\equiv 5\pmod 8$ and $\left(\frac{q}{r}\right)=\left(\frac{q}{s}\right)=\left(\frac{q}{t}\right)=1$,
		
		\item   $s\equiv 1\pmod 8$, $t\equiv 1\pmod 8$ and $\left(\frac{q}{r}\right)=-\left(\frac{q}{s}\right)=-\left(\frac{q}{t}\right)=1$,
		
		\item   $s\equiv 7\pmod 8$, $t\equiv 1\pmod 8$ and $\left(\frac{q}{r}\right)=-\left(\frac{q}{s}\right)=-\left(\frac{q}{t}\right)=1$,

		\item   $s\equiv 7\pmod 8$, $t\equiv 7\pmod 8$ and $\left(\frac{q}{r}\right)=-\left(\frac{q}{s}\right)=-\left(\frac{q}{t}\right)=1$,
		
		\item   $s\equiv 7\pmod 8$, $t\equiv 5$ or $3\pmod 8$ and $\left(\frac{q}{r}\right)=-\left(\frac{q}{s}\right)=\left(\frac{q}{t}\right)=1$,
		
		\item   $s\equiv 1\pmod 8$, $t\equiv 5$ or $3\pmod 8$ and $\left(\frac{q}{r}\right)=-\left(\frac{q}{s}\right)=\left(\frac{q}{t}\right)=1$,
		
		\item   $s\equiv t\equiv 3 \pmod 8$ and $\left(\frac{q}{r}\right)=\left(\frac{q}{s}\right)=\left(\frac{q}{t}\right)=1$,
		
		\item   $s\equiv t\equiv 5 \pmod 8$ and $\left(\frac{q}{r}\right)=\left(\frac{q}{s}\right)=\left(\frac{q}{t}\right)=1$.
	 	\end{enumerate}
We check that under each of these conditions, we have $\rg(A(K))	\geq 3$.
\end{enumerate}
	
\noindent Hence, $d$ satisfies the condition $\C2$ of  Lemma \ref{lemma2qB}-$(3)$ and 	 $\rg(A(K))=2$ if and only if, after a suitable permutation of $r$, $s$, $t$, we have one of the conditions $a)$,..., $d)$ in the second item.	
	\item  Now assume that $r$, $s$, $t$ and $q$ satisfy the condition  $\C3$ of  Lemma \ref{lemma2qB}-$(3)$, i.e., 
	$\left(\frac{2q}{r}\right)=\left(\frac{2q}{s}\right)=\left(\frac{2q}{t}\right)=-1$.
	
		\begin{enumerate}[$\bullet$]
		\item Let $r\equiv 3  \pmod 8$. Under this condition, $\C3$ is equivalent to the fact that $s$ and $t$ satisfy one of the following conditions:
		\begin{enumerate}[$a)$]
			\item   $s\equiv 3\pmod 8$, $t\equiv 5\pmod 8$ and $-\left(\frac{q}{r}\right)=-\left(\frac{q}{s}\right)=-\left(\frac{q}{t}\right)=-1$,
			
			\item   $s\equiv 1\pmod 8$, $t\equiv 1\pmod 8$ and $-\left(\frac{q}{r}\right)= \left(\frac{q}{s}\right)= \left(\frac{q}{t}\right)=-1$,
			
			\item   $s\equiv 7\pmod 8$, $t\equiv 1\pmod 8$ and $-\left(\frac{q}{r}\right)= \left(\frac{q}{s}\right)= \left(\frac{q}{t}\right)=-1$,

			\item   $s\equiv 7\pmod 8$, $t\equiv 7\pmod 8$ and $-\left(\frac{q}{r}\right)= \left(\frac{q}{s}\right)= \left(\frac{q}{t}\right)=-1$,
			
			\item   $s\equiv 7\pmod 8$, $t\equiv 5$ or $3\pmod 8$ and $-\left(\frac{q}{r}\right)= \left(\frac{q}{s}\right)=-\left(\frac{q}{t}\right)=-1$,
			
			\item   $s\equiv 1\pmod 8$, $t\equiv 5$ or $3\pmod 8$ and $-\left(\frac{q}{r}\right)= \left(\frac{q}{s}\right)=-\left(\frac{q}{t}\right)=-1$,
			
			\item   $s\equiv t\equiv 3 \pmod 8$ and $-\left(\frac{q}{r}\right)=-\left(\frac{q}{s}\right)=-\left(\frac{q}{t}\right)=-1$,
			
			\item   $s\equiv t\equiv 5 \pmod 8$ and $-\left(\frac{q}{r}\right)=-\left(\frac{q}{s}\right)=-\left(\frac{q}{t}\right)=-1$.

		\end{enumerate}
	We check that under each of these conditions, we have $\rg(A(K))	\geq 2$ and $\rg(A(K))=2$ if and only if $s\equiv 7\pmod 8$, $t\equiv 5 \pmod 8$, $q\equiv  3\pmod 8$ and $-\left(\frac{q}{r}\right)= \left(\frac{q}{s}\right)=-\left(\frac{q}{t}\right)=-1$.

	\item  Let $r\equiv 5  \pmod 8$. Under this condition, $\C3$ is equivalent to the fact that $s$ and $t$ satisfy one of the following conditions:
	\begin{enumerate}[$a)$]
		\item   $s\equiv 3\pmod 8$, $t\equiv 5\pmod 8$ and $-\left(\frac{q}{r}\right)=-\left(\frac{q}{s}\right)=-\left(\frac{q}{t}\right)=-1$,
		
		\item   $s\equiv 1\pmod 8$, $t\equiv 1\pmod 8$ and $-\left(\frac{q}{r}\right)= \left(\frac{q}{s}\right)= \left(\frac{q}{t}\right)=-1$,
		
		\item   $s\equiv 7\pmod 8$, $t\equiv 1\pmod 8$ and $-\left(\frac{q}{r}\right)= \left(\frac{q}{s}\right)= \left(\frac{q}{t}\right)=-1$,

		\item   $s\equiv 7\pmod 8$, $t\equiv 7\pmod 8$ and $-\left(\frac{q}{r}\right)= \left(\frac{q}{s}\right)= \left(\frac{q}{t}\right)=-1$,
		
		\item   $s\equiv 7\pmod 8$, $t\equiv 5$ or $3\pmod 8$ and $-\left(\frac{q}{r}\right)= \left(\frac{q}{s}\right)=-\left(\frac{q}{t}\right)=-1$,
		
		\item   $s\equiv 1\pmod 8$, $t\equiv 5$ or $3\pmod 8$ and $-\left(\frac{q}{r}\right)= \left(\frac{q}{s}\right)=-\left(\frac{q}{t}\right)=-1$,
		
		\item   $s\equiv t\equiv 3 \pmod 8$ and $-\left(\frac{q}{r}\right)=-\left(\frac{q}{s}\right)=-\left(\frac{q}{t}\right)=-1$,
		
		\item   $s\equiv t\equiv 5 \pmod 8$ and $-\left(\frac{q}{r}\right)=-\left(\frac{q}{s}\right)=-\left(\frac{q}{t}\right)=-1$.

	\end{enumerate}
	We check that under each of these conditions, we have $\rg(A(K))	\geq 2$ and $\rg(A(K))=2$ if and only if    $s\equiv 7\pmod 8$, $t\equiv 3\pmod 8$, $q\equiv 3\pmod 8$ and $-\left(\frac{q}{r}\right)= \left(\frac{q}{s}\right)=-\left(\frac{q}{t}\right)=-1$.
	
		\item Moreover, we check that  $\rg(A(K))	\geq 3$, for the remaining cases, i.e., 
	\begin{enumerate}[$a)$]
		
		\item $r\equiv 7\pmod 8$,   $s\equiv 1\pmod 8$, $t\equiv 1\pmod 8$ and $\left(\frac{q}{r}\right)= \left(\frac{q}{s}\right)= \left(\frac{q}{t}\right)=-1$,
		
		\item   $r\equiv 7\pmod 8$,   $s\equiv 7\pmod 8$, $t\equiv 1\pmod 8$ and $\left(\frac{q}{r}\right)= \left(\frac{q}{s}\right)= \left(\frac{q}{t}\right)=-1$,

		\item   $r\equiv 7\pmod 8$,   $s\equiv 7\pmod 8$, $t\equiv 7\pmod 8$ and $\left(\frac{q}{r}\right)= \left(\frac{q}{s}\right)= \left(\frac{q}{t}\right)=-1$,
		
		\item   $r\equiv 7\pmod 8$,   $s\equiv 7\pmod 8$, $t\equiv 5$ or $3\pmod 8$ and $\left(\frac{q}{r}\right)= \left(\frac{q}{s}\right)=-\left(\frac{q}{t}\right)=-1$,
		
	  		\item   $r\equiv 1\pmod 8$,   $s\equiv 1\pmod 8$, $t\equiv 1\pmod 8$ and $\left(\frac{q}{r}\right)= \left(\frac{q}{s}\right)=\left(\frac{q}{t}\right)=-1$.

	\end{enumerate}

\end{enumerate}

Hence, $d$ satisfies the condition $\C3$ of  Lemma \ref{lemma2qB}-$(3)$ and 	 $\rg(A(K))=2$ if and only if, after a suitable permutation of $r$, $s$, $t$, we have   the conditions in the third item.

\end{enumerate}

\end{proof}

}

 \begin{remark} Let $K$ be a real biquadratic field   of the form $B)$, then by  combining  Lemma  \ref{realTruquad2-rankq}, Lemma \ref{lemarst2q}, Lemma \ref{lemma2qB} and Lemma \ref{lm fukuda}, we check  that
 	 $ \rg(A(K_\infty))\leq 2$ and $ \rg(A(K_\infty))=\rg( A(K))$ if and only if $K$ is taking one of the forms in  the items  $10)$,  $11)$,  $12)$, $13)$, $14)$, $15)$, $16)$, ..., $21)$, $22)$ of the main theorem.
\end{remark}

 \begin{remark} \label{remq=r=7mod8}  
 Let	$q\equiv r\equiv  7\pmod 8$	 be two different prime numbers. We can assume that  $\left(\frac{r}{q}\right)=1$. Proceeding as in the proof of \cite[Lemmas 5 and 7]{CAZbol}, we check the following.
 \begin{enumerate}[\rm $1)$]
  	\item  Let    $a$ and $b$ be two integers such that
 	$ \varepsilon_{rq}=a+b\sqrt{rq}$. 
  	Then  $2r(a+1)$ is a square in $\NN$ and  
 	   $$\sqrt{\varepsilon_{rq}}=b_1\sqrt{p}+b_2\sqrt{q}  \text{ and } 1= rb_1^2-qb_2^2,$$ 
  
 	where $b_1$ and $b_2$ are two integers.
  
 \item Let $x$ and $y$ be two integers  such that
 $ \varepsilon_{2rq}=x+y\sqrt{2rq}$. Then
 out of the three numbers   
 $(x+1)$, $r(x+1)$ or $2r(x+1)$, exactly one of them is   a square in $\NN $. Moreover, we have:
 \begin{enumerate}[\rm $\bullet$]
 	 	\item If $(x+1)$ is a square in $\NN$, then $\sqrt{2\varepsilon_{2rq}}=y_1+y_2\sqrt{2rq}$ and 	$2=  y_1^2-2rqy_2^2$,
 	\item If $r(x+1)$ is a square in $\NN$, then $\sqrt{2\varepsilon_{2rq}}=y_1\sqrt{r}+y_2\sqrt{2q}$ and $2= ry_1^2-2qy_2^2$,
 	\item If $2r(x+1)$ is a square in $\NN$, then $\sqrt{2\varepsilon_{2rq}}=y_1\sqrt{2r}+y_2\sqrt{q}$ and $2=  2ry_1^2-qy_2^2$, 
 \end{enumerate}
 where $y_1$ and $y_2$ are two integers. 
 Notice that the fact that exactly one of the values $(x+1)$, $r(x+1)$ and $2r(x+1)$  is   a square in $\NN $  comes from  
the equality  $x^2-1=(x+1)(x-1)=2rqy^2$ and the unique factorization in $\mathbb{Z}$ (other possibilities are eliminated by means of Legendre symbol as in the	 proof of \cite[Lemmas 5 and 7]{CAZbol}.
 
  \end{enumerate}
 \end{remark}

\bigskip

 Let us recall that $L$ is named to be   any   real biquadratic field   of the form  
 \begin{center}
 	{$ \displaystyle L:=\QQ(\sqrt{\delta_0},\sqrt{r})\text{ or }\QQ(\sqrt{\delta_0},\sqrt{rs}) \text{ where }   r\equiv 1\pmod8   \text{ and }  \left(\frac{\delta_0}{r}\right)=1,$}
 \end{center}
 and $\delta_0\in\{q, q_1q_2\}	$. We have:

 \begin{lemma}\label{realTruquad2-rankq1q1} Let $K= \QQ(\sqrt{q_1q_2},\sqrt{d})\not=L$ and  $K_1= \QQ(\sqrt{q_1q_2},\sqrt{d}, \sqrt{2})$  where $q_1\equiv3 \pmod 4$,  $q_2\equiv3 \pmod 8$ are two prime numbers and $d \equiv 1\pmod  4$  is a positive square free integer that is not divisible by  $q_1q_2$.
 	
 	\noindent $\mathbf{1)}$	Assume that $K_1= \QQ(\sqrt{q_1q_2},\sqrt{d}, \sqrt{2})$,  where   $d=\delta r\equiv 1\pmod 4 $ with $r$ is a prime  number  and    $\delta\in \{1,q_1,q_2\}$.  
 	We have  $$\rg(A(K_1))\in\{0,1\}.$$ More precisely:
 	$	\rg(A(K_1)) =0$ if and only if  we have      one of the following conditions :  
 	\begin{enumerate}[\indent$\bullet$]
 		\item $r\equiv 3\pmod 8$,
 		 \item $r\equiv 5\pmod 8$, $q_1\equiv 7\pmod 8$ and  $\left(\frac{q_1}{ r}\right)=-1$,
 	 \item $r\equiv 5\pmod 8$, $q_1\equiv 3\pmod 8$     and  $\left(\frac{q_1q_2}{ r}\right)=-1$,

 		\item  $r\equiv  7\pmod 8$ and   $q_1\equiv 3\pmod 8$. 
 		
 	\end{enumerate}

 	\noindent $\mathbf{2)}$ 	Assume that $K_1= \QQ(\sqrt{q_1q_2},\sqrt{d}, \sqrt{2})$ with $d=\delta rs\equiv 1\pmod 4$, where $r$ and $s$ are two prime numbers and    $\delta\in \{1,q_1,q_2\}$.
 	We have:  $$\rg(A(K_1))\in\{1,2,3,4\}.$$
 	More precisely,  we have:

 	\noindent$\rg(A(K_1))=1$ if and only if, after a suitable permutation of $r$ and $s$, we have one of the following conditions:
 	
 	\begin{enumerate}[$\C1:$]
 		\item $ \left(\frac{q_1q_2}{s}\right)=\left(\frac{q_1q_2}{r}\right)=-1$ and we have one of the following congruence conditions:
 		\begin{enumerate}[$\bullet$]
 			\item  $r\equiv 5\pmod 8$ and $s\equiv 3\pmod 8$ with $q_1\equiv 3\pmod 8$ or $[q_1\equiv 7\pmod 8$ and $\left(\frac{q_1}{r}\right)=-1]$,
 			
 			\item $r\equiv 3\pmod 8$ and $s\equiv 3\pmod 8$ with  $q_1\equiv 7\pmod 8$ and $\left(\frac{q_1}{r}\right)\not=\left(\frac{q_1}{s}\right)$, 
 			
 			\item $r\equiv 3\pmod 8$ and $s\equiv 7\pmod 8$ with  $q_1\equiv 3\pmod 8$.

 		\end{enumerate}

 		\item	$ \left(\frac{q_1q_2}{r}\right)=-\left(\frac{q_1q_2}{s}\right)=-1$ and we have one of the following congruence conditions:
 		\begin{enumerate}[$\bullet$]
 			\item  $ r\equiv         s\equiv 3\pmod 8$ with $q_1\equiv 7\pmod 8$ and $\left(\frac{q_1}{r}\right) \not=\left(\frac{q_1}{s}\right)$,
 			\item $ r\equiv     3\pmod 8$ and  $    s\equiv 5\pmod 8$  with $q_1\equiv 7\pmod 8$ and $\left(\frac{q_1}{s}\right) =-1$,  
 			
 			\item   $ r\equiv     5\pmod 8$ and  $    s\equiv 3\pmod 8$ with   $q_1\equiv 3\pmod 8$ or    $[q_1\equiv 7\pmod 8$ and $\left(\frac{q_1}{r}\right) =-1]$.
 		\end{enumerate}

 		\item	$ \left(\frac{q_1q_2}{r}\right)=\left(\frac{q_1q_2}{s}\right)=1$ and we have one of the following congruence conditions:
 		\begin{enumerate}[$\bullet$]
 			\item  $r\equiv 3\pmod 8$ and $s\equiv 3\pmod 8$  with       $ q_1\equiv 7\pmod 8$ and $\left(\frac{q_1}{r}\right) \not=\left(\frac{q_1}{s}\right) $,
 			
 			\item   $r\equiv 5\pmod 8$ and $s\equiv 3\pmod 8$  with       $ q_1\equiv 7\pmod 8$ and $\left(\frac{q_1}{r}\right) =-1$.
 		\end{enumerate}

 	\end{enumerate}

 	\noindent $\rg(A(K_1))=2$ if and only if, after a suitable permutation of $r$ and $s$, we have one of the following conditions:
 	\begin{enumerate}[$\C1:$]
 		\item $ \left(\frac{q_1q_2}{s}\right)=\left(\frac{q_1q_2}{r}\right)=-1$ and we have one of the following congruence conditions:
 		\begin{enumerate}[$\bullet$]
 			\item    $r\equiv 5\pmod 8$ $s\equiv 3\pmod 8$ with      $ q_1\equiv 7\pmod 8$ and $\left(\frac{q_1}{r}\right) =1$.

 			\item  $r\equiv 5\pmod 8$ and $s\equiv 5\pmod 8$  with    $q_1\equiv 3\pmod 8$ or $[q_1\equiv 7\pmod 8$ and $\left(\frac{q_1}{r}\right) =-1$ or $\left(\frac{q_1}{s}\right) =-1]$.
 			
 			\item $r\equiv 3\pmod 8$ and $s\equiv 3\pmod 8$ with    $q_1\equiv 3\pmod 8$ or  $[q_1\equiv 7\pmod 8$ and $\left(\frac{q_1}{r}\right) =\left(\frac{q_1}{s}\right)  ]$.

 			\item  $r\equiv 3\pmod 8$ and $s\equiv 7\pmod 8$ with    $ q_1\equiv 7\pmod 8$.
 			
 			\item  $r\equiv 5\pmod 8$ and $s\equiv 7\pmod 8$ with  $q_1\equiv 3\pmod 8$ or $[q_1\equiv 7\pmod 8$ and $\left(\frac{q_1}{r}\right) =-1]$.

 			\item $r\equiv 5\pmod 8$ and $s\equiv 1\pmod 8$ with   $q_1\equiv 3\pmod 8$ or $[q_1\equiv 7\pmod 8$ and $\left(\frac{q_1}{r}\right) =-1]$.

 			\item $r\equiv 3\pmod 8$ and $s\equiv 1\pmod 8$,
 			
 			\item $r\equiv 7\pmod 8$ and $s\equiv 7$ or $1\pmod 8$ with    $q_1\equiv 3\pmod 8$.
 			
 		\end{enumerate}

 		\item	$ \left(\frac{q_1q_2}{r}\right)=-\left(\frac{q_1q_2}{s}\right)=-1$ and we have one of the following congruence conditions:
 		\begin{enumerate}[$\bullet$]
 			\item   $    r\equiv 7$ or $1\pmod 8$ and $s\equiv     7\pmod 8$ with  $q_1\equiv 3\pmod 8$.
 			
 			\item  $r\equiv   s\equiv 5\pmod 8$ with   $q_1\equiv 3\pmod 8$ or   $[q_1\equiv 7\pmod 8$ and $\left(\frac{q_1}{r}\right) =-1$ or $ \left(\frac{q_1}{s}\right)=-1  ]$.
 			
 			\item $r\equiv   s\equiv 3\pmod 8$ with  $q_1\equiv 3\pmod 8$ or  $[q_1\equiv 7\pmod 8$ and  $\left(\frac{q_1}{r}\right) =\left(\frac{q_1}{s}\right)]$.
 			
 			\item $r\equiv     3\pmod 8$ and  $   s\equiv 5\pmod 8$ with $q_1\equiv 3\pmod 8$ or  $[q_1\equiv 7\pmod 8$ and    $\left(\frac{q_1}{s}\right)=1  ]$.

 			\item $ r\equiv     5\pmod 8$ and  $    s\equiv 3\pmod 8$ with     $ q_1\equiv 7\pmod 8$ and $\left(\frac{q_1}{r}\right) =1$,

 				\item       $ r\equiv     1\pmod 8$,  $    s\equiv 5\pmod 8$        $q_1\equiv 7\pmod 8$ and $\left(\frac{q_1}{s}\right) =-1$,
 			
 			\item      $ r\equiv     1\pmod 8$ and  $    s\equiv 3\pmod 8$.

 		\end{enumerate}

 		\item	$ \left(\frac{q_1q_2}{r}\right)=\left(\frac{q_1q_2}{s}\right)=1$ and we have one of the following congruence conditions:
 		\begin{enumerate}[$\bullet$]
 			\item   $r\equiv 7\pmod 8$ and $s\equiv 5 \pmod 8$ with   $q_1\equiv 3\pmod 8$ or    $[q_1\equiv 7\pmod 8$ and $\left(\frac{q_1}{s}\right) =-1]$.

 			\item $r\equiv 7\pmod 8$ and $s\equiv 3 \pmod 8$ with   $q_1\equiv 3\pmod 8$ or    $ [q_1\equiv 7\pmod 8$ and $\left(\frac{q_1}{r}\right) \not=\left(\frac{q_1}{s}\right)]$,
 			
 			\item $r\equiv 3\pmod 8$ and $s\equiv 3\pmod 8$ with   $q_1\equiv 3\pmod 8$ or     $[ q_1\equiv 7\pmod 8$ and $\left(\frac{q_1}{r}\right) =\left(\frac{q_1}{s}\right)] $,

 			\item $r\equiv 5\pmod 8$ and $s\equiv 5\pmod 8$ with       $ q_1\equiv 7\pmod 8$ and $[\left(\frac{q_1}{r}\right)=-1$ or $  \left(\frac{q_1}{s}\right)=-1 ]$,
 			
 			\item $r\equiv 5\pmod 8$ and $s\equiv 3\pmod 8$ with    $q_1\equiv 3\pmod 8$ or   $[ q_1\equiv 7\pmod 8$ and $\left(\frac{q_1}{r}\right)=1]$.
 		\end{enumerate}

 	\end{enumerate}

 		\noindent $\mathbf{3)}$ 	Assume that $K_1= \QQ(\sqrt{q_1q_2},\sqrt{d}, \sqrt{2})$ with $d=\delta rst\equiv 1\pmod 4$,  where  $r$, $s$ and $t$ are three prime numbers such that
 		  $ \left(\frac{q_1q_2}{r}\right)=\left(\frac{q_1q_2}{s}\right)=\left(\frac{q_1q_2}{t}\right)=-1$ or $ \left(\frac{q_1q_2}{r}\right)=-\left(\frac{q_1q_2}{s}\right)=\left(\frac{q_1q_2}{t}\right)=-1$, and    $\delta\in \{1,q_1,q_2\}$. Then, we have: 
 		 $$\rg(A(K_1))\in\{3,4,5\}.$$

 \end{lemma}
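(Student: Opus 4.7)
The strategy is the direct analogue of the proof of Lemma \ref{realTruquad2-rankq} (with $K_1'=\QQ(\sqrt{q_1q_2},\sqrt{2})$ replacing $\QQ(\sqrt{q},\sqrt{2})$). Since $q_1\equiv 3\pmod 4$ and $q_2\equiv 3\pmod 8$, Lemma \ref{realBiquad} guarantees that the $2$-class number of $K_1'$ is odd, so $K_1/K_1'$ is a QO-extension and Lemma \ref{AmbiguousClassNumberFormula} applies:
\[
\rg(A(K_1))=t_{K_1/K_1'}-1-e_{K_1/K_1'}.
\]
The plan is to determine $t_{K_1/K_1'}$ (an easy count in terms of the Kronecker symbols $\left(\tfrac{q_1q_2}{p}\right)$ for each prime $p\mid d$ and the ramification of $2$) and then compute $e_{K_1/K_1'}$ by evaluating norm residue symbols at each ramified prime of $K_1'$ against a set of generators of $E_{K_1'}$.

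First I would pin down a fundamental system of units of $K_1'$ via Wada's algorithm (recalled just after Lemma \ref{wada's f.}): exactly as in \cite[Lemma 2.5]{Chemsarith} and \cite[Lemma 3.2]{CEH2024}, one obtains representations $\sqrt{\vep_{q_1q_2}}=\tfrac{\sqrt2}{2}(x+y\sqrt{q_1q_2})$ and $\sqrt{\vep_{2q_1q_2}}=\tfrac{\sqrt2}{2}(\alpha+\beta\sqrt{2q_1q_2})$ for suitable integers (possibly with the alternative forms in Remark \ref{remq=r=7mod8} when $q_1\equiv q_2\equiv 7\pmod 8$ type situations arise). This yields $E_{K_1'}=\langle -1,\vep_2,\sqrt{\vep_{q_1q_2}},\sqrt{\vep_{2q_1q_2}}\rangle$ and gives the key norms $N_{K_1'/\QQ(\sqrt 2)}(\sqrt{\vep_{q_1q_2}})=\pm 1$, $N_{K_1'/\QQ(\sqrt{q_1q_2})}(\sqrt{\vep_{q_1q_2}})=-\vep_{q_1q_2}$, and analogously for $\sqrt{\vep_{2q_1q_2}}$, which are exactly the data required by Remark \ref{remkresiduesymbolsVYfroms}.

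Next, for each odd prime $p\in\{r,s,t\}$ dividing $d$, I would classify the decomposition of $p$ in $K_1'$ by the $\Y/\V$-forms of Remark \ref{remkresiduesymbolsVYfroms} with respect to the three quadratic subfields $\QQ(\sqrt 2),\QQ(\sqrt{q_1q_2}),\QQ(\sqrt{2q_1q_2})$; this depends only on $p\bmod 8$ and on $\left(\tfrac{q_1q_2}{p}\right)$. Using the norms computed above together with Remark \ref{remkresiduesymbolsVYfroms}, each norm residue symbol $\left(\tfrac{u,p}{\mathcal P}\right)$ with $u\in\{-1,\vep_2,\sqrt{\vep_{q_1q_2}},\sqrt{\vep_{2q_1q_2}}\}$ reduces to a rational Legendre symbol in $\left(\tfrac{-1}{p}\right),\left(\tfrac{2}{p}\right),\left(\tfrac{q_1}{p}\right)$; the extra presence of $q_1$ (versus just $q$ in the proof of Lemma \ref{realTruquad2-rankq}) is what brings the symbols $\left(\tfrac{q_1}{r}\right),\left(\tfrac{q_1}{s}\right)$ into the conclusions. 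For each prescribed congruence pattern of $(r,s)$ or $(r,s,t)$ modulo $8$ and each constellation of Legendre symbols, I would tabulate these values, read off the dimension of $E_{K_1'}/(E_{K_1'}\cap N_{K_1/K_1'}(K_1^*))$, and subtract from $t_{K_1/K_1'}-1$. Each item in the statement corresponds to one row of that tabulation.

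Assertion $\mathbf{1)}$ handles a single prime $r\mid d$; here $t_{K_1/K_1'}\in\{2,3\}$ and the computation is short. Assertion $\mathbf{2)}$ has $t_{K_1/K_1'}\in\{3,4,5\}$ and requires a separate subcase for each combination of $\left(\tfrac{q_1q_2}{r}\right),\left(\tfrac{q_1q_2}{s}\right)$, following exactly the pattern used for $\QQ(\sqrt q,\sqrt{rs},\sqrt 2)$. Assertion $\mathbf{3)}$ is a bound, so it suffices to observe $t_{K_1/K_1'}\le 8$ (in fact $\le 6$ under the hypothesis) and $e_{K_1/K_1'}\le 4$. The \textbf{main obstacle} will be the same one encountered in Remark \ref{remqym7mod8}: when some prime factor of $d$ is $\equiv 1\pmod 8$ and is totally split in $K_1'$, or when $r\equiv 7\pmod 8$ with $\left(\tfrac{q_1q_2}{r}\right)=1$, the direct evaluation of the symbols $\left(\tfrac{\sqrt{\vep_{q_1q_2}},r}{\cdot}\right)$ at the two conjugate primes is not accessible by Scholz-type reciprocity alone. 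In those cases I would, exactly as in the proof of Lemma \ref{realTruquad2-rankq}, introduce the intermediate quadratic subfield $M=\QQ(\sqrt{2r},\sqrt{q_1q_2})$ (or the appropriate $M$ depending on the splitting), combine Wada's class number formula (Lemma \ref{wada's f.}) with Kaplan's results on $h_2(2q_1q_2 r)$ used in \cite{kaplan76,connor88}, deduce the order of $A(M)$, and then use the unramified quadratic extension $K_1/M$ to transfer the information back to $K_1$. The conditions on $q_1\bmod 8$ that appear throughout the statement (splitting $q_1\equiv 3$ versus $q_1\equiv 7\pmod 8$) arise precisely at this point, mirroring the dichotomy observed in Remark \ref{remqym7mod8}.
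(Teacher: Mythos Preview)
Your overall strategy is right, but there is a genuine gap in the second step: your description of $E_{K_1'}$ is incorrect and would derail every subsequent symbol computation. Since $q_1\equiv q_2\equiv 3\pmod 4$, we have $q_1q_2\equiv 1\pmod 4$ and $N(\vep_{q_1q_2})=+1$; the analogue of \cite[Lemma 2.5]{Chemsarith} does \emph{not} give $\sqrt{\vep_{q_1q_2}}=\tfrac{\sqrt2}{2}(x+y\sqrt{q_1q_2})$. In fact $\vep_{q_1q_2}=\tfrac{1}{q_1}v^2$ with $v\in\QQ(\sqrt{q_1q_2})$ (equivalently $\sqrt{\vep_{q_1q_2}}=c\sqrt{q_1}+d\sqrt{q_2}$ for integers $c,d$), so $\sqrt{\vep_{q_1q_2}}\notin K_1'$. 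The correct fundamental system is
\[
E_{K_1'}=\langle -1,\;\vep_2,\;\vep_{q_1q_2},\;\eta\rangle,\qquad
\eta=\begin{cases}\sqrt{\vep_{2q_1q_2}}&\text{if }q_1\equiv 3\pmod 8,\\ \sqrt{\vep_{q_1q_2}\vep_{2q_1q_2}}&\text{if }q_1\equiv 7\pmod 8,\end{cases}
\]
see \cite[Lemma 4]{AzchemsTrends} and \cite[Lemmas 5 and 7]{CAZbol}. The bifurcation in $\eta$ is precisely what produces the $q_1\pmod 8$ dichotomy in the statement, not the totally-split obstruction you point to. With the wrong generators you would compute $\left(\tfrac{\sqrt{\vep_{q_1q_2}},\,p}{\mathcal P}\right)$ instead of $\left(\tfrac{\vep_{q_1q_2},\,p}{\mathcal P}\right)$ and $\left(\tfrac{\eta,\,p}{\mathcal P}\right)$; the latter reduce (via $\vep_{q_1q_2}=\tfrac1{q_1}v^2$ and $\vep_{2q_1q_2}=\tfrac1{\omega}u^2$ with $\omega\in\{2,q_1\}$) to Legendre symbols in $\left(\tfrac{q_1}{p}\right)$ and $\left(\tfrac{\omega}{p}\right)$, which is how $\left(\tfrac{q_1}{r}\right),\left(\tfrac{q_1}{s}\right)$ actually enter.

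A second, smaller point: in the delicate case $r\equiv 7\pmod 8$ with $\left(\tfrac{q_1q_2}{r}\right)=1$, the auxiliary field used in the paper is $M=\QQ(\sqrt{2q_1r},\sqrt{q_1q_2})$ (not $\QQ(\sqrt{2r},\sqrt{q_1q_2})$), and the class-number input needed is $h_2(2q_1r)$; you would also need to verify $q(M)=2$ here, which requires the explicit square-root expressions above together with Remark \ref{remq=r=7mod8} when $q_1\equiv r\equiv 7\pmod 8$. Once the unit group is corrected, the rest of your plan (Remark \ref{remkresiduesymbolsVYfroms} case analysis, then the $M$-trick for the split primes) matches the paper's proof.
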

 \begin{proof} 	As the class number of $K_1'=\QQ(\sqrt{q_1q_2},\sqrt{2}) $ is odd,  we get $\rg(A(K_1))=t_{K_1/K_1'}-1-e_{K_1/K_1'}$.
Let us define the following parameter:
 $$
 \gamma=
 \begin{cases}
 	0,\text{ if }\left(\frac{q_2}{q_1}\right)=-1  \text{ and }   q_1\equiv 7\pmod 8 \\
 	1, \text{ else}.
 \end{cases}
 $$
 
	We note that if 
 	$q_1\equiv3 \pmod 8$, then $E_{K_1'}=\langle-1, \vep_{2},\vep_{q_1q_2}, \sqrt{\vep_{2q_1q_2}}  \rangle $ and $\sqrt{\vep_{2q_1q_2}} = \frac{\sqrt{2}}{2}(x+y\sqrt{{2q_1q_2}})  $ where $x$ and $y$  are two integers such that $2=-x^2+2q_1q_2y^2$ (cf. \cite[Lemma 4]{AzchemsTrends}).   
 	
 	We also note  that if  $q_1\equiv7 \pmod 8$, $E_{K_1'}=\langle-1, \vep_{2},\vep_{q_1q_2}, \sqrt{\vep_{2q_1q_2}\vep_{q_1q_2}}  \rangle $ and $\sqrt{\vep_{2q_1q_2}} = \frac{\sqrt{2}}{2}(a\sqrt{2q_1}+b\sqrt{q_2})  $ (resp. $\sqrt{\vep_{q_1q_2}} = c\sqrt{q_1}+d\sqrt{q_2})$ where $a$ and $b$ (resp. $c$ and $d$)  are two integers such that $(-1)^{\gamma}2=2q_1a^2-b^2q_2$ (resp. $(-1)^{\gamma}=q_1c^2-d^2q_2$)\label{sqrtepsiq1q2} 
 	(cf. \cite[Lemmas 5 and 7]{CAZbol}).

 	 Thus, for $q_1\equiv3 \pmod 4$, we have $E_{K_1'}=\langle-1, \vep_{2},\vep_{q_1q_2}, \eta  \rangle $, where $\eta$ is defined as follows
 	  $$
 	 \eta =
 	 \begin{cases}
 	 	\sqrt{\vep_{2q_1q_2}\vep_{q_1q_2}},\text{ if }   q_1\equiv 7\pmod 8 \\
 	 	\sqrt{ \vep_{2q_1q_2}}, \text{ else}.
 	 \end{cases}
 	 $$
 	We check that  $N_{K_1'/\QQ(\sqrt{2q_1q_2})}(\eta)=(-1)^\gamma \vep_{ 2q_1q_2}$.
 	Let    $r$ be an odd prime number and    $\mathcal R$ be a prime ideal of $K_1'$   lying above $r$.
 	\begin{enumerate}[$1)$]
 		\item By Remark \ref{remkresiduesymbolsVYfroms}, if $r$ is not totally decomposed in $K_1'$, then  $\left(\dfrac{-1,\,\delta r}{ \mathcal R}\right)=1 $,   else we have $\left(\dfrac{-1,\,\delta r}{ \mathcal R}\right)=\left(\dfrac{-1 }{ r}\right) $. Furthermore, we have:	
 		\begin{enumerate}[$\bigstar$]
 			\item Assume that $r\equiv 3$ or $5\pmod 8$. Then the decomposition of $r$ in $K_1'$ takes the $\Y$ form for $\QQ(\sqrt{2})$ and so
 			\begin{eqnarray}
 				\left(\frac{\vep_2,\,\delta r}{ \mathcal R}\right)&=&\left(\frac{\vep_2,\,\delta  }{ \mathcal R}\right)\left(\frac{\vep_2,\, r}{ \mathcal R}\right)=\left(\frac{-1}{  r}\right)=(-1)^{\frac{r-1}{2}} .   
 			\end{eqnarray}
 			For the other symbols we distinguish the following two subcases:\\
 			\noindent$\bullet$ If $ \left(\frac{q_1q_2}{r}\right)=-1$, then the decomposition of $r$ in $K_1'$ takes the $\V$ form for $\QQ(\sqrt{2q_1q_2})$. Note that 
 			$N_{K_1'/\QQ(\sqrt{2q_1q_2})}({\vep_{q_1q_2}})= 1\in \QQ$ and  $N_{K_1'/\QQ(\sqrt{2q_1q_2})}(\eta)=(-1)^\gamma \vep_{ 2q_1q_2}$. Then, by   Remark \ref{remkresiduesymbolsVYfroms}, we have:
 			\begin{eqnarray}
 				\left(\frac{\vep_{q_1q_2},\, \delta r}{ \mathcal R}\right)&=& 	\left(\frac{\vep_{q_1q_2},\,   r}{ \mathcal R}\right)=1   ,\\
 				& &\nonumber\\
 				\left(\frac{\eta,\,\delta r}{ \mathcal R}\right)&=&\left(\frac{\eta,\,  r}{ \mathcal R}\right)=\left(\frac{(-1)^\gamma \vep_{ 2q_1q_2},\,r}{ \mathcal R_{\QQ(\sqrt{2q_1q_2})}}\right)=\left(\frac{(-1)^\gamma}{ r}\right)\left(\frac{\omega}{ r}\right)  .
 			\end{eqnarray}
 			where 
 			$$
 			\omega =
 			\begin{cases}
 				{q_1}           ,\text{ if }   q_1\equiv 7\pmod 8 \\
 				2, \text{ else}.
 			\end{cases}
 			$$
 				The last equality follows from the fact that $\vep_{2q_1q_2}=\frac{1}{\omega}  u^2$ for some $u  \in \QQ(\sqrt{2q_1q_2})$.

 			\noindent $\bullet$ Now if  $ \left(\frac{q_1q_2}{r}\right)=1$,	then the decomposition of $r$ in $K_1'$ takes the $\V$ form for $\QQ(\sqrt{q_1q_2})$. Note that 
 			$N_{K_1'/\QQ(\sqrt{q_1q_2})}(\vep_{q_1q_2})=  \vep_{q_1q_2}^2 $ and we check that 
 			$N_{K_1'/\QQ(\sqrt{q_1q_2})}({\eta})=  (-1)^{\gamma}\vep_{q_1q_2} $ or $1$ according to whether $q_1\equiv 7\pmod 8$  or not. Thus, by Remark \ref{remkresiduesymbolsVYfroms}, we have
 			\begin{eqnarray}
 				\left(\frac{\vep_{q_1q_2},\, \delta r}{ \mathcal R}\right)&=&  1   ,\\
 				& &\nonumber\\
 				\left(\frac{\eta,\,\delta r}{ \mathcal R}\right)&=& \left(\frac{(-1)^{\gamma}\vep_{q_1q_2}\text{ or }1,\,  r}{ \mathcal R_{\QQ(\sqrt{q_1q_2})}}\right)=\left(\frac{(-1)^\gamma}{ r}\right)\left(\frac{q_1}{ r}\right) \text{ or } 1,  
 			\end{eqnarray}
 			according to whether $q_1\equiv 7\pmod 8$  or not. The last equality follows from the fact that $\vep_{q_1q_2}=\frac{1}{q_1}v^2$ for some $v\in \QQ(\sqrt{q_1q_2})$.

 		\quad	Notice that $\rg(A(K_1))=2-1-e_{K_1/K_1'}=1-e_{K_1/K_1'}\in\{0,1\}$. By the above values of norm residue symbols we have, 
 				$e_{K_1/K_1'}=0$ if and only if $r\equiv 5\pmod 8$ and [$(q_1\equiv 3\pmod 8$ and $\left(\frac{q_1q_2}{ r}\right)=1)$ or $(q_1\equiv 7\pmod 8$ and $\left(\frac{q_1}{ r}\right)=1)]$.
 				 Therefore,
 		 	$\rg(A(K_1))=1$ if and only if $r\equiv 5\pmod 8$ and [$(q_1\equiv 3\pmod 8$ and $\left(\frac{q_1q_2}{ r}\right)=1)$ or $(q_1\equiv 7\pmod 8$ and $\left(\frac{q_1}{ r}\right)=1)]$.

 			\item Assume that $r\equiv 7$ or $1\pmod 8$ with $ \left(\frac{q_1q_2}{r}\right)=-1$. Then the decomposition of $r$ in $K_1'$ takes the $\V$ form for $\QQ(\sqrt{2})$. As $N_{K_1'/\QQ(\sqrt{2})}(\vep_{q_1q_2})= 1$ and 
 			$N_{K_1'/\QQ(\sqrt{2})}({\eta})= 1$ or $-1$ according to whether $q_1\equiv 7\pmod 8$ or not. Thus,  by Remark \ref{remkresiduesymbolsVYfroms}, we have: 
 			\begin{eqnarray}
 				\left(\frac{\vep_2,\,\delta r}{ \mathcal R}\right) = 1=	\left(\frac{\vep_{q_1q_2},\, \delta r}{ \mathcal R}\right),
 			\end{eqnarray}
 			and 
 			\begin{eqnarray}
 				\left(\frac{\eta,\,\delta r}{ \mathcal R}\right)&=&1 \text{ or }\left(\frac{-1}{ r}\right),
 			\end{eqnarray}
 			according to whether $q_1\equiv 7\pmod 8$ or not.
 			Therefore, in this subcase, 
 		 $\rg(A(K_1))=2-1-e_{K_1/K_1'}=1-e_{K_1/K_1'}\in\{0,1\}$, and $e_{K_1/K_1'}=0$
 			   if and only if $r\equiv 1\pmod 8$ or $(r\equiv q_1\equiv 7\pmod 8 )$. Thus, 
 			  $\rg(A(K_1))=1$ if and only if $r\equiv 1\pmod 8$ or $(r\equiv q_1\equiv 7\pmod 8 )$.

 		\item	Assume that $r\equiv 7\pmod 8$ with $ \left(\frac{q_1q_2}{r}\right)=1$. Then $r$ is totally decomposed in $K$. 
 	 	Therefore, in this subcase, we have  $\rg(A(K_1))=3-e_{K_1/K_1'}$.
 				  Inspired by \cite[p. 160]{lemmermeyer2013reciprocity}, we check easily that there are 	two prime ideals      $\mathcal R_{\QQ(\sqrt{2})}$  and  $\mathcal R'_{\QQ(\sqrt{2})}$ of $\QQ(\sqrt{2})$ above $r$ such that : 
 				  $$\left(\frac{\vep_2,\,\delta r}{ \mathcal R_{\QQ(\sqrt{2})}}\right) = 1  \text{ and }  \left(\frac{\vep_2,\,\delta r}{ \mathcal R'_{\QQ(\sqrt{2})}}\right) = -1.$$
 			 	Let $\mathcal R$ (resp. $\mathcal R'$) be an ideal  of $K_1'$ above the ideal    $\mathcal R_{\QQ(\sqrt{2})}$ (resp. $\mathcal R'_{\QQ(\sqrt{2})}$). Thus,  by Remark \ref{remkresiduesymbolsVYfroms}, we have:   
 				\begin{eqnarray}
 					\left(\dfrac{-1,\,\delta r}{ \mathcal R}\right)=\left(\dfrac{-1,\,\delta r}{ \mathcal R'}\right)=-1, \quad	 \left(\frac{\vep_2,\,\delta r}{ \mathcal R}\right) = 1  \text{ and }  \left(\frac{\vep_2,\,\delta r}{ \mathcal R'}\right) = -1.
 				\end{eqnarray}
 				Furthermore, we have 	$\left(\dfrac{\varepsilon_{q_1q_2},\,\delta r}{ \mathcal R}\right)=\left(\dfrac{\varepsilon_{q_1q_2},\,\delta r}{ \mathcal R'}\right)=\left(\frac{q_1}{ r}\right)$,
 				and notice that if $\varepsilon_{q_1q_2}$ is not norm, then $\overline{\varepsilon_{q_1q_2}}=\overline{-1}$ in the quotient  $E_{K_1'}/(E_{K_1'}\cap N_{K_1/K_1'}(K_1))$.
 				Thus   $\rg(A(K_1))=3-e_{K_1/K_1'}\leq 1$. Put $M=\QQ(\sqrt{2q_1r},\sqrt{q_1q_2})$. Remark that $K_1/M$ is an unramified quadratic extension and $$h_2(M)=\frac14q(M)h_2( q_1q_2)h_2(2q_1r)h_2(2q_2r)=\frac12q(M) h_2(2q_1r) .$$
 				
 			Note that $q(M)=2$ (we check this by using Remark \ref{remq=r=7mod8} for the case $q_1\equiv r\equiv7\pmod 8$, and the expressions of square roots of units given in   Page \pageref{sqrtepsiq1q2} for the other cases), and that $h_2(2q_1r)=2$ if and only if $q_1\equiv 3\pmod 8$ (cf. \cite[Corollaries 19.7 and 18.4]{connor88}). So by class field theory $\rg(A(K_1))=1$ if and only if  $q_1\equiv 7\pmod 8$. 
 			It follows that $e_{K_1/K_1'}\in \{2,3\}$, more precisely, $e_{K_1/K_1'}=2$ if and only if  $q_1\equiv 7\pmod 8$. 
 		 In the same way as we obtained  Remark	\ref{remqym7mod8}, we deduce the following.
 				\begin{remark}  In the case when  $r\equiv 7\pmod 8$   with $ \left(\frac{q}{r}\right)=1$, it seems that the direct computation of 
 					$\left(\frac{\eta,\,r}{ \mathcal R}\right)$ and $\left(\frac{\eta,\,r}{ \mathcal R'}\right)$ is   difficult. But from the above paragraph 
 					we deduce that 
 					\begin{enumerate}[$a)$]
 						\item If   $q_1\equiv 3\pmod 8$, then   	$\left(\frac{\eta,\,r}{ \mathcal R}\right)=-1$ or $\left(\frac{\eta,\,r}{ \mathcal R'}\right)=-1$, and $\overline{\eta}\not\in \{\overline{-1}, \overline{\vep_{2}}\}$ in the quotient  $E_{K_1'}/(E_{K_1'}\cap N_{K_1/K_1'}(K_1))$. This means that $\left(\frac{\sqrt{\vep_{2q_1q_2}},\,r}{ \mathcal R}\right)=-1$ and $\left(\frac{\sqrt{\vep_{2q_1q_2}},\,r}{ \mathcal R'}\right)=1$.
 						\item 	If   $q_1\equiv 7\pmod 8$,  we have $\overline{\eta} \in \{\overline{1},\overline{-1}, \overline{\vep_{2}}\}$ in the quotient  $E_{K_1'}/(E_{K_1'}\cap N_{K_1/K_1'}(K_1))$. 
 					\end{enumerate}
 				\end{remark}
 		 	\end{enumerate}

 		\item We shall use the above signs of norm  residue symbols.
 		\begin{enumerate}[\rm$\star$]
 			\item Assume that $ \left(\frac{q_1q_2}{s}\right)=\left(\frac{q_1q_2}{r}\right)=-1$. We have $\rg(A(K_1))= 4-1-e_{K_1/K_1'}=3-e_{K_1/K_1'} $.  Moreover,
 		 	\begin{enumerate}[$a)$]
 				
 				\item    If $r\equiv 5\pmod 8$ and $s\equiv 3\pmod 8$, then $\rg(A(K_1))\in \{1,2\}$ and $\rg(A(K_1))=1$ if and only if   $q_1\equiv 3\pmod 8$ or $[q_1\equiv 7\pmod 8$ and $\left(\frac{q_1}{r}\right) =-1  ]$.
 				
 				\item    If $r\equiv 5\pmod 8$ and $s\equiv 5\pmod 8$, then  $\rg(A(K_1))\in \{2,3\}$ and $\rg(A(K_1))=2$ if and only if   $q_1\equiv 3\pmod 8$ or $[q_1\equiv 7\pmod 8$ and $\left(\frac{q_1}{r}\right) =-1$ or $\left(\frac{q_1}{s}\right) =-1]$.

 				\item    If $r\equiv 3\pmod 8$ and $s\equiv 3\pmod 8$, then $\rg(A(K_1))\in \{1,2\}$ and $\rg(A(K_1))=1$ if and only if     $[q_1\equiv 7\pmod 8$ and $\left(\frac{q_1}{r}\right) \not=\left(\frac{q_1}{s}\right)  ]$.
 				
 				\item   If $r\equiv 3\pmod 8$ and $s\equiv 7\pmod 8$, then $\rg(A(K_1))\in \{1,2\}$ and $\rg(A(K_1))=1$ if and only if     $ q_1\equiv 3\pmod 8$. 
 				
 				\item    If $r\equiv 5\pmod 8$ and $s\equiv 7\pmod 8$, then we have  then  $\rg(A(K_1))\in \{2,3\}$ and $\rg(A(K_1))=2$ if and only if   $q_1\equiv 3\pmod 8$ or $[q_1\equiv 7\pmod 8$ and $\left(\frac{q_1}{r}\right) =-1]$.
 				
 				\item   If $r\equiv 5\pmod 8$ and $s\equiv 1\pmod 8$, then we have $\rg(A(K_1))\in \{2,3\}$ and $\rg(A(K_1))=2$ if and only if   $q_1\equiv 3\pmod 8$ or $[q_1\equiv 7\pmod 8$ and $\left(\frac{q_1}{r}\right) =-1]$.
 				
 				\item    If $r\equiv 3\pmod 8$ and $s\equiv 1\pmod 8$, then we have    $\rg(A(K_1))=2$.
 				
 				\item   If $r\equiv 7\pmod 8$ and $s\equiv 7$ or $1\pmod 8$, then we have $\rg(A(K_1))\in \{2,3\}$ and $\rg(A(K_1))=2$ if and only if   $q_1\equiv 3\pmod 8$.

 				\item   If $r\equiv 1\pmod 8$ and $s\equiv 1\pmod 8$, then   $\rg(A(K_1))=3$.
 			\end{enumerate}

 			\item   Assume that  $ \left(\frac{q_1q_2}{r}\right)=-\left(\frac{q_1q_2}{s}\right)=-1$  and $s\not \equiv 1\pmod 8$. Notice that $\rg(A(K_1))=5-e_{K_1/K_1'}$ or $3-e_{K_1/K_1'}$ according to whether $s\equiv 7\pmod 8$ or not. We have:
 			\begin{enumerate}[$a)$]
 				\item  If $    r\equiv 7$ or $1\pmod 8$ and $s\equiv     7\pmod 8$, then  $\rg(A(K_1))\in\{2,3\}$ and $\rg(A(K_1))=2$ if and only if $q_1\equiv 3\pmod 8$.
 				
 				\item If  $r\equiv   s\equiv 5\pmod 8$, then $\rg(A(K_1))\in \{2,3\}$ and $\rg(A(K_1))=2$ if and only if   $q_1\equiv 3\pmod 8$ or   $[q_1\equiv 7\pmod 8$ and $\left(\frac{q_1}{r}\right) =-1$ or $ \left(\frac{q_1}{s}\right)=-1  ]$.
 				\item If  $r\equiv   s\equiv 3\pmod 8$, then $\rg(A(K_1))\in \{1,2\}$ and $\rg(A(K_1))=1$ if and only if   $q_1\equiv 7\pmod 8$ and    $[  \left(\frac{q_1}{r}\right) =-\left(\frac{q_1}{s}\right)=-\left(\frac{q_2}{q_1}\right)=1   $, $   -\left(\frac{q_1}{r}\right) =\left(\frac{q_1}{s}\right)=\left(\frac{q_2}{q_1}\right)=1$,   
 				$-\left(\frac{q_1}{r}\right) =\left(\frac{q_1}{s}\right)=-\left(\frac{q_2}{q_1}\right)=1$ or 
 				$\left(\frac{q_1}{r}\right) =-\left(\frac{q_1}{s}\right)=\left(\frac{q_2}{q_1}\right)=1$
 				$]$.  {This is equivalent to}, $q_1\equiv 7\pmod 8$ and  $\left(\frac{q_1}{r}\right) \not=\left(\frac{q_1}{s}\right)$.
 				
 				\item   If  $r\equiv     3\pmod 8$ and  $   s\equiv 5\pmod 8$, then  $\rg(A(K_1))\in \{1,2\}$ and $\rg(A(K_1))=1$ if and only if   $q_1\equiv 7\pmod 8$ and    $\left(\frac{q_1}{s}\right)=-1  $.

 				\item   If    $ r\equiv     5\pmod 8$ and  $    s\equiv 3\pmod 8$, then  $\rg(A(K_1))\in \{1,2\}$ and $\rg(A(K_1))=1$ if and only if   $q_1\equiv 3\pmod 8$ or    $[q_1\equiv 7\pmod 8$ and $\left(\frac{q_1}{r}\right) =-1]$.

 				\item    If    $ r\equiv     1\pmod 8$ and  $    s\equiv 5\pmod 8$, then $\rg(A(K_1))\in \{2,3\}$ and $\rg(A(K_1))=2$ if and only if       $q_1\equiv 7\pmod 8$ and $\left(\frac{q_1}{s}\right) =-1$.
 				
 				\item    If    $ r\equiv     1\pmod 8$ and  $    s\equiv 3\pmod 8$, then   $\rg(A(K_1))=2$.
 				
 			\end{enumerate}

 			\item   Assume that  $ \left(\frac{q_1q_2}{r}\right)=\left(\frac{q_1q_2}{s}\right)=1$. 	
 			\begin{enumerate}[$a)$]
 				\item   If $r\equiv 7\pmod 8$ and $s\equiv 5 \pmod 8$, then  $\rg(A(K_1))\in \{2,3\}$ and $\rg(A(K_1))=2$ if and only if   $q_1\equiv 3\pmod 8$ or    $[q_1\equiv 7\pmod 8$ and $\left(\frac{q_1}{s}\right) =-1]$.

 				\item  If $r\equiv 7\pmod 8$ and $s\equiv 3 \pmod 8$, then  $\rg(A(K_1))\in \{2,3\}$ and $\rg(A(K_1))=2$ if and only if   $q_1\equiv 3\pmod 8$ or    $ [q_1\equiv 7\pmod 8$ and $\left(\frac{q_1}{r}\right) \not=\left(\frac{q_1}{s}\right)]$.

 				\item    If $r\equiv 3\pmod 8$ and $s\equiv 3\pmod 8$, then  $\rg(A(K_1))\in \{1,2\}$ and $\rg(A(K_1))=1$ if and only if       $ q_1\equiv 7\pmod 8$ and $\left(\frac{q_1}{r}\right) \not=\left(\frac{q_1}{s}\right) $.

 				\item   If $r\equiv 5\pmod 8$ and $s\equiv 5\pmod 8$, then     $\rg(A(K_1))\in \{2,3\}$ and $\rg(A(K_1))=2$ if and only if       $ q_1\equiv 7\pmod 8$ and $[\left(\frac{q_1}{r}\right)=-1$ or $  \left(\frac{q_1}{s}\right)=-1 ]$. 
 				
 				\item    If $r\equiv 5\pmod 8$ and $s\equiv 3\pmod 8$, then   $\rg(A(K_1))\in \{1,2\}$ and $\rg(A(K_1))=1$ if and only if       $ q_1\equiv 7\pmod 8$ and $\left(\frac{q_1}{r}\right)=-1$. 
 				\item    If $r\equiv 7\pmod 8$ and $s\equiv 7\pmod 8$, then it is clear that   $\rg(A(K_1))=7-e_{K_1/K_1'}\geq 3$.
 			\end{enumerate}
 			
 		\end{enumerate}

 		\item Now assume that we are in the conditions of the third item. Thus, $\rg(A(K_1))=6-1-e_{K_1/K_1'}=5-e_{K_1/K_1'}\leq 5$. 
 		By means of the signs of norms residue symbols that we have in the proof of the first item, we check that $e_{K_1/K_1'}\leq  2$. So the result.

 	\end{enumerate}	
  \end{proof}

 \begin{remark}Let $K$ be a real biquadratic field of the form $C)$ with $K\not= L$.
 	By combining  Lemma  \ref{realTruquad2-rankq1q1}, Lemma \ref{lemmaq1q2Bd=1mod4} and Lemma \ref{lm fukuda}, we get that  $ \rg(A(K_\infty))=\rg( A(K))$ if and only if $K$ is taking one of the forms in  the items $23)$, $24)$,  $25)$,  $26)$,  $27)$, $28)$ and   $29)$ of the main theorem. 
 \end{remark}
 This completes the proof of the main theorem.

 	\section{\bf Investigation of the structure of the $2$-Iwasawa module}\label{sec4}

 The computation of the $2$-class number of real biquadratic fields   $F=\mathbb{Q}(\sqrt{d_1},\sqrt{d_2})$   is reachable, especially when the 
 discriminant of $F$ is divisible by two, three or even four   prime numbers and the $2$-class number of the quadratic subfields is small $($i.e. $\leq 16$$)$. In fact, one can 
    use the Kuroda's formula 	$h_2(F)=\frac{1}{4}q(F)h_2(d_1)h_2(d_2)h_2(d_1d_2)$. For   values of $h_2(d_1)$, $h_2(d_2)$ and $h_2(d_1d_2)$ one can refer to \cite{kaplan76,connor88,kaplan1973divisibilitepar8} and the references listed therein. Moreover, for 
     the   computation of unit index $q(F)$ one can use the Wada's method (see page \pageref{wada's f.} above) and proceed as in   \cite{AzmouhcapitulationActaArith} or \cite{AzRezzouguiinternationalj}.
     Therefore, the following lemma highlights  an effective procedure for computing the size of the Iwasawa module of biquadratic fields.

 \begin{lemma}
 	Let $K$ be a real biquadratic field such that   $K_\infty/K$ is totally ramified  and  $K_1$ has an abelian $2$-tower $($or in particular $\rg(A(K_1)) =1$$)$. 
 	Let $F$ be a real biquadratic field such that      $K_1/F$ is a quadratic unramified extension.   
 	Then $|A(K_\infty)|=\frac12h_2(F)$ if and only if $ F^{(1)}=F^{(2)}$ and $h_2(K)=\frac12h_2(F)$. 
 \end{lemma}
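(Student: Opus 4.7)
The plan is to reduce the statement to Fukuda's Lemma \ref{lm fukuda} by first establishing the class field theoretic identity $K_1^{(1)}=F^{(2)}$ under the abelian tower hypothesis on $K_1$.

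First I would prove the identification $K_1^{(1)}=F^{(2)}$. On the one hand, $F^{(2)}/K_1$ is an unramified pro-$2$-extension (being the composite of the unramified tower $F \subseteq F^{(1)} \subseteq F^{(2)}$ with the unramified $K_1/F$), so under the hypothesis $\mathcal{L}(K_1)=K_1^{(1)}$ we get $F^{(2)}\subseteq K_1^{(1)}$. On the other hand, $K_1^{(1)}/F$ is a metabelian unramified extension, built from the abelian $K_1^{(1)}/K_1$ and the abelian $K_1/F$, so it lies in the second Hilbert $2$-class field $F^{(2)}$. This yields the key identity
\[
h_2(K_1)=[F^{(2)}:K_1]=[F^{(2)}:F^{(1)}]\cdot[F^{(1)}:K_1]=h_2(F^{(1)})\cdot\frac{h_2(F)}{2}.
\]

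Second, I would use the total ramification hypothesis: this forces the norm maps $A(K_{n+1})\to A(K_n)$ to be surjective for all $n\geq 0$, so $h_2(K_n)$ is a non-decreasing sequence bounded above by $|A(K_\infty)|$, and Fukuda's lemma tells us that once this sequence stabilizes at some level $n$, it equals $|A(K_\infty)|$ from that level onward.

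For the $(\Leftarrow)$ direction I would assume $F^{(1)}=F^{(2)}$ (so $h_2(F^{(1)})=1$) and $h_2(K)=\tfrac{1}{2}h_2(F)$; plugging into the boxed identity gives $h_2(K_1)=\tfrac{1}{2}h_2(F)=h_2(K)$, and Fukuda's lemma applied at level $n=0$ gives stability throughout, so $|A(K_\infty)|=h_2(K)=\tfrac{1}{2}h_2(F)$. For the $(\Rightarrow)$ direction, the inequality chain
\[
\frac{h_2(F)}{2}=|A(K_\infty)|\geq h_2(K_1)=h_2(F^{(1)})\cdot\frac{h_2(F)}{2}\geq \frac{h_2(F)}{2}
\]
forces all inequalities to be equalities, yielding $F^{(1)}=F^{(2)}$ and $h_2(K_1)=\tfrac{1}{2}h_2(F)$; combined with $|A(K_\infty)|=h_2(K_1)$ and the monotonicity $h_2(K)\leq h_2(K_1)=|A(K_\infty)|$, one then reads off $h_2(K)=h_2(K_1)=\tfrac{1}{2}h_2(F)$ by invoking Fukuda's lemma at level $0$.

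The main obstacle will be the last inference in the $(\Rightarrow)$ direction, namely upgrading the divisibility $h_2(K)\mid h_2(K_1)$ to an equality. This requires exploiting the fact that $|A(K_\infty)|$ is already attained at level $1$ under the total ramification hypothesis, combined with a structural argument on the norm kernel $\mathrm{ker}(A(K_1)\to A(K))$ — for instance, using that when $\rg(A(K_1))=1$ the group $A(K_1)$ is cyclic, so Galois-invariance forces this kernel to be trivial and stability of $h_2(K_n)$ must already hold at level $0$.
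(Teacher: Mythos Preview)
Your approach coincides with the paper's: identify $K_1^{(1)}$ with the top of $F$'s tower (the paper writes $\mathcal{L}(F)=K_1^{(1)}$, you write $F^{(2)}=K_1^{(1)}$; these agree since $\mathcal{L}(K_1)=K_1^{(1)}$ by the abelian-tower hypothesis), read off the identity $h_2(K_1)=h_2(F^{(1)})\cdot\tfrac{1}{2}h_2(F)$, and feed $h_2(K)=h_2(K_1)$ into Fukuda's theorem at level $0$. For the $(\Leftarrow)$ direction this is exactly the paper's argument.

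On the $(\Rightarrow)$ direction you are right to flag the step $h_2(K)=h_2(K_1)$ as the obstacle, and in fact the paper's own proof does not treat this direction at all --- it only writes ``If furthermore, $h_2(K)=\frac12 h_2(F)$, then we have the result by Fukuda's theorem'', which is purely $(\Leftarrow)$. Your proposed fix, however, does not work: every subgroup of a cyclic group is characteristic, so Galois-stability of $\ker\bigl(A(K_1)\to A(K)\bigr)$ is automatic and carries no information. Nothing in the hypotheses excludes, say, $A(K_1)\cong\ZZ/2^m\ZZ$ with $A(K)\cong\ZZ/2^{m-1}\ZZ$ and stabilisation from level $1$ onward. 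Since the paper only ever invokes the $(\Leftarrow)$ implication in its applications, this does not affect the downstream theorems; but the ``only if'' is not established by either your argument or the paper's.
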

  \begin{proof}
 	Notice that $K_1/F$ is  unramified and 	$ A(K_1) $ is cyclic. Then by class field theory, we have the following diagram: 
 	\[\xymatrix@R=1cm@C=0.2cm{
 		&F &  &K_1 \ar@{-}[ll]&  &F^{(1)} \ar@{-}[ll]\ar@/_1pc/@{-}[llll]_{A(F)}& & &\mathcal{L}(F)=K_1^{(1)}\ar@{-}[lll]\ar@/^1.5pc/@{-}[lllll]^{A(K_1)}\\
 	}\]
 	Thus,  the Hilbert $2$-class field tower of $F$ is abelian if and only if  	$h_2(K_1)=\frac12h_2(F)$. If furthermore, 	$h_2(K)=\frac12h_2(F)$, then we have the result by Fukuda's theorem.
 	\end{proof}
 \bigskip	
  In the light of the main theorem (i.e. Theorem \ref{maintheorem}) we have the following results.
   \bigskip

 \begin{theorem}
 	  Let $K=\mathbb{Q}(\sqrt{q},\sqrt{rs})$ where $r\equiv s \equiv   q\equiv3\pmod 4$ are three different prime numbers. Then $A(K_\infty)\simeq \ZZ/2 \ZZ $ if and only if, after a permutation of $r$ and $s$,  we have one of the following conditions:
 		\begin{enumerate}[\indent$\C1:$]
 			\item $r\equiv    q\equiv 3\pmod 8$,  $s\equiv    7\pmod 8$   and   $\left(\frac{q}{s}\right)=\left(\frac{q}{r}\right)=-1= \left(\frac{s}{r}\right)$,
 			
 			\item $r\equiv    s\equiv 3\pmod 8$,  $q\equiv    7\pmod 8$ and   $\left(\frac{q}{r}\right)=-\left(\frac{q}{s}\right)=-1$,
 			
 				\item	$s\equiv    q\equiv 3\pmod 8$,  $r\equiv    7\pmod 8$   and   $\left(\frac{q}{r}\right)=-\left(\frac{q}{s}\right)=-1=\left(\frac{r}{s}\right)$,
 		 \item	$r\equiv    q\equiv 3\pmod 8$,  $s\equiv    7\pmod 8$   and   $\left(\frac{q}{r}\right)=-\left(\frac{q}{s}\right)=-1=\left(\frac{r}{s}\right)$,
 				
 			\item	$s\equiv    q\equiv 3\pmod 8$,  $r\equiv    7\pmod 8$   and   $\left(\frac{q}{s}\right)=\left(\frac{q}{r}\right)=1=\left(\frac{r}{s}\right)$.
 	 	\end{enumerate}
  	Moreover, under each of these conditions, we have $\mu_K=\lambda_K=0$ and $\nu_K=1$.
  \end{theorem}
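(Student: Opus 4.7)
The plan is to combine the main theorem with the lemma immediately preceding this statement. As a first step I would restrict Theorem \ref{maintheorem} to the sub-family of $K = \mathbb{Q}(\sqrt{q},\sqrt{rs})$ with $r \equiv s \equiv q \equiv 3 \pmod 4$: this puts $K$ under form $A)$ with $d = rs$, so only items $2)$--$4)$ of the main theorem (those giving $\rg = 1$ for $d$ a product of two primes) are relevant. The congruence $r,s \equiv 3 \pmod 4$ forces $r,s \pmod 8 \in \{3,7\}$, and intersecting this with the quadratic-residue hypotheses of items $2)$--$4)$ yields a finite list of triples $(r,s,q) \pmod 8$ together with prescribed values of $\left(\frac{q}{r}\right)$ and $\left(\frac{q}{s}\right)$. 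For each such triple the main theorem guarantees $\rg(A(K_\infty)) = \rg(A(K)) = 1$, so $A(K_\infty)$ is already cyclic.

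To upgrade ``cyclic'' to ``cyclic of order $2$'' I would apply the preceding lemma. The strategy is, in each sub-case, to pick a real biquadratic subfield $F$ of $K_1 = K(\sqrt{2})$ such that $K_1/F$ is an unramified quadratic extension --- natural candidates are $\mathbb{Q}(\sqrt{2q},\sqrt{rs})$, $\mathbb{Q}(\sqrt{q},\sqrt{2rs})$ or $\mathbb{Q}(\sqrt{2qrs},\sqrt{q})$, chosen so that the primes above $2$ (ramified in $K_1/K$) become absorbed in $F$. The lemma then gives $|A(K_\infty)| = \tfrac12 h_2(F) = 2$ provided that the three conditions $h_2(F) = 4$, $h_2(K) = 2$, and $F^{(1)} = F^{(2)}$ are verified. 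The additional Legendre-symbol conditions appearing in $\C1$--$\C5$ beyond those of the main theorem (for instance the requirement $\left(\frac{s}{r}\right) = -1$ in $\C1$) are precisely what is needed to cut $h_2(K)$ down from a larger power of $2$ to exactly $2$.

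The main obstacle is carrying out these three class-field-theoretic computations case-by-case. For $h_2(K)$ and $h_2(F)$ I would invoke Kuroda's formula (Lemma \ref{wada's f.}): for example $h_2(K) = \tfrac14 q(K)\, h_2(q)\, h_2(rs)\, h_2(qrs)$, in which $h_2(q) = 1$ since $q \equiv 3 \pmod 4$, while $h_2(rs)$ and $h_2(qrs)$ are determined by the classical Kaplan--Scholz--R\'edei type formulas (see \cite{kaplan76,connor88}) in terms of $\left(\frac{r}{s}\right)$, $\left(\frac{q}{r}\right)$, $\left(\frac{q}{s}\right)$ and, where the relevant $2$-rank exceeds $1$, of fourth-power residue symbols; the unit index $q(K)$ is then obtained via Wada's algorithm (page \pageref{algo wada}). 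The termination of the $2$-class field tower of $F$ at $F^{(1)}$ will follow from Theorem \ref{AabounePrzekhini} or Proposition \ref{LemBenjShn} once the $2$-class numbers of the three unramified quadratic extensions of $F$ have been computed by a further application of Kuroda. Once $A(K_\infty) \simeq \mathbb{Z}/2\mathbb{Z}$ is secured, the boundedness of $h_2(K_n) = 2^{\lambda_K n + \mu_K 2^n + \nu_K}$ forces $\mu_K = \lambda_K = 0$ and $\nu_K = 1$.
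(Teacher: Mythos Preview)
Your approach is essentially the same as the paper's: use the main theorem to reduce to the rank-$1$ cases, then apply the preceding lemma to pin down $|A(K_\infty)|$ via an auxiliary biquadratic field $F$ with $K_1/F$ unramified. The paper, however, makes the specific choice $F=\mathbb{Q}(\sqrt{2},\sqrt{qrs})$ rather than your candidates $\mathbb{Q}(\sqrt{2q},\sqrt{rs})$ or $\mathbb{Q}(\sqrt{q},\sqrt{2rs})$; the advantage is that the Hilbert $2$-class field tower of $\mathbb{Q}(\sqrt{2},\sqrt{qrs})$ has already been worked out in \cite{AzmouhcapitulationActaArith} (Theorem~11 and Proposition~8 there), so one can simply cite that $h_2(F)=4$ and $F^{(1)}=F^{(2)}$ hold precisely when the extra Legendre condition (e.g.\ $\left(\frac{s}{r}\right)=-1$ in case $\C1$) is satisfied, with $h_2(K)=2$ then following automatically. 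Your plan to recompute these quantities via Kuroda, Wada, and Proposition~\ref{LemBenjShn} would also succeed but is more laborious; note also that the extra symbol condition in the paper is framed as controlling $h_2(F)$ and the tower of $F$, not $h_2(K)$ directly as you suggest.
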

 \begin{proof}
    It suffices to focus on the forms appearing in the items $2)$, $3)$, $4)$ and $5)$ of the main theorem.
 		Assume that 	$r\equiv     3\pmod 8$ and  $s\equiv    7\pmod 8$   are two prime numbers such that $\left(\frac{q}{s}\right)=\left(\frac{q}{r}\right)=-1$
 		and $q\equiv 3\pmod 8$.  We have $K_1/F$ is unramified, where $F=\mathbb{Q}(\sqrt{2},\sqrt{qrs})$. From 
 		\cite[Theorem 11]{AzmouhcapitulationActaArith} and \cite[Proposition 8 and   its proof]{AzmouhcapitulationActaArith}, we deduce that  $h_2(F)=4$ and  $ F^{(1)}=F^{(2)}$ if and only if $\left(\frac{s}{r}\right)=-1$. In this case,   
 		$h_2(K)=2$. So  $A(K_\infty)\simeq \ZZ/2 \ZZ \text{ if and only if } \left(\frac{s}{r}\right)=-1$. We similarly prove the other cases.
 \end{proof}

 \begin{remark}\label{remh2F}
 	 Let     $r\equiv q\equiv     3\pmod 8$ and  $s\equiv    7\pmod 8$   be three prime numbers such that $\left(\frac{q}{s}\right)=\left(\frac{q}{r}\right)=-1=-\left(\frac{s}{r}\right)$.
 	 	 Put $F=\mathbb{Q}(\sqrt{2},\sqrt{qrs})$. Then 	$h_2(F)=\frac{1}{4}q(F)h_2(qrs)h_2({2qrs})$.
 	 	 Under our assumptions, we have $h_2(2qrs)=4$ and $h_2(qrs)$ is divisible by $8$ (cf. \cite[p. 40]{AzmouhcapitulationActaArith}). Therefore,
 	 	 $h_2(F)=2^m$ if and only if $q(F)=1$ and $h_2(qrs)=2^m$.
 \end{remark}

 \begin{theorem}\label{THMAinfy=2m}
   Let $K=\mathbb{Q}(\sqrt{q},\sqrt{rs})$,   where     $r\equiv q\equiv     3\pmod 8$ and  $s\equiv    7\pmod 8$   be three prime numbers such that $\left(\frac{q}{s}\right)=\left(\frac{q}{r}\right)=-1=-\left(\frac{s}{r}\right)$. Let $a$   and $b$  be the integers such that
 	$ \varepsilon_{qrs}=a+b\sqrt{qrs}$.
 	Assume that   $s(a-1)$  is not a square in $\NN $.    Then  
 	$$A(K_\infty)\simeq A(K)\simeq\ZZ/2^{m-1} \ZZ ,$$
 	where $m$ is the positive integer such that  $h_2(qrs)=2^m$.   	Therefore,  we have $\mu_K=\lambda_K=0$ and $\nu_K=m-1$.
  \end{theorem}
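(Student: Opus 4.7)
The plan is to apply the (unlabelled) lemma stated just before this section to $K$ with the auxiliary field $F=\mathbb{Q}(\sqrt{2},\sqrt{qrs})$. First I verify its hypotheses: $K_\infty/K$ is totally ramified at $2$ since $K_1=K(\sqrt{2})$ is a ramified QO-extension of $K$ (of the form $A)$); by Lemma~\ref{realTruquad2-rankq} under condition $\C2$ of case $(2)$, $A(K_1)$ has rank $1$ and is therefore cyclic, so the $2$-tower of $K_1$ is abelian; finally $K_1=F(\sqrt{q})$ is an unramified quadratic extension of $F$, which is checked by examining the primes of $F$ above $2,q,r,s$ using the congruences $q\equiv r\equiv 3\pmod 8$, $s\equiv 7\pmod 8$ together with $\left(\tfrac{q}{r}\right)=\left(\tfrac{q}{s}\right)=-1$.

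Having set the stage, Kuroda's formula (Lemma~\ref{wada's f.}) combined with $h_2(2)=1$, the hypothesis $h_2(qrs)=2^{m}$, and $h_2(2qrs)=4$ from Remark~\ref{remh2F} yields
\[
  h_2(F)=\tfrac{1}{4}\,q(F)\,h_2(2)\,h_2(qrs)\,h_2(2qrs)=q(F)\cdot 2^{m}.
\]
The theorem therefore reduces to proving $q(F)=1$ under the hypothesis that $s(a-1)$ is not a square in $\mathbb{N}$.

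To compute $q(F)$ I apply Wada's algorithm (see page~\pageref{algo wada}) to the three quadratic subfields $\mathbb{Q}(\sqrt{2})$, $\mathbb{Q}(\sqrt{qrs})$, $\mathbb{Q}(\sqrt{2qrs})$ of $F$. Since $qrs\equiv 3\pmod 4$, $N(\varepsilon_{qrs})=+1$ and the factorization $(a-1)(a+1)=qrs\,b^{2}$ with essentially coprime factors distributes the primes $q,r,s$ between $a-1$ and $a+1$ in finitely many ways, each detectable by checking which of $(a\pm 1)$, $q(a\pm 1)$, $r(a\pm 1)$, $s(a\pm 1)$ and their doubles is a perfect square. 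A parallel analysis governs the remaining candidate square roots $\sqrt{\varepsilon_{2qrs}}$, $\sqrt{\varepsilon_{qrs}\varepsilon_{2qrs}}$ and the products obtained by adjoining $\varepsilon_{2}$. The hypothesis that $s(a-1)$ is not a square, together with the prevailing congruences of $q,r,s$ and the value $\left(\tfrac{s}{r}\right)=1$, eliminates every assignment that would produce a nontrivial square root, forcing $q(F)=1$ and hence $h_2(F)=2^{m}$.

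It remains to verify that $h_2(K)=h_2(K_1)=\tfrac{1}{2}h_2(F)=2^{m-1}$. Kuroda applied to $K$ gives $h_2(K)=\tfrac{1}{4}q(K)\cdot 1\cdot 2\cdot 2^{m}$, and the same unit-index bookkeeping inside $E_{K}$ yields $q(K)=2$, so $h_2(K)=2^{m-1}$; an analogous computation for $K_1$, using the description of $E_{K_1'}$ with $K_1'=\mathbb{Q}(\sqrt{q},\sqrt{2})$ from the proof of Lemma~\ref{realTruquad2-rankq}, gives $h_2(K_1)=2^{m-1}$. The lemma above then forces $|A(K_\infty)|=\tfrac{1}{2}h_2(F)=2^{m-1}$, and since $\rg(A(K_\infty))=\rg(A(K))=1$ by Theorem~\ref{maintheorem} both groups are cyclic of that order. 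Because $h_2(K_n)$ stabilises at $2^{m-1}$, Iwasawa's formula \eqref{iwasawa} yields $\mu_K=\lambda_K=0$ and $\nu_K=m-1$. The main obstacle is the Wada/Legendre-symbol case analysis of the third paragraph, which must pin down $q(F)=1$ (and then $q(K)=q(K_1)=2$) precisely under the non-square hypothesis on $s(a-1)$.
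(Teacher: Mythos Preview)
Your overall strategy matches the paper's: use the unlabelled lemma with $F=\mathbb{Q}(\sqrt 2,\sqrt{qrs})$, compute $h_2(F)=2^m$ via $q(F)=1$ (this is exactly Remark~\ref{remhF}, deduced from Lemma~\ref{lem0}), compute $h_2(K)=2^{m-1}$, and verify $F^{(1)}=F^{(2)}$. The only substantive difference is how the last point is established. The paper does \emph{not} compute $h_2(K_1)$ directly; instead it computes $h_2(\FF')$ for the \emph{other} unramified quadratic extension $\FF'=\mathbb{Q}(\sqrt 2,\sqrt{qr},\sqrt s)$ of $F$ (Lemma~\ref{lemmunitstriquad}), obtains $h_2(\FF')=\tfrac12 h_2(qrs)$, and then invokes Proposition~\ref{LemBenjShn} to conclude $F^{(1)}=F^{(2)}$.

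The gap in your write-up is precisely this step. Saying ``an analogous computation for $K_1$ \ldots gives $h_2(K_1)=2^{m-1}$'' is not a proof: determining $q(K_1)$ for the triquadratic field $K_1=\mathbb{Q}(\sqrt 2,\sqrt q,\sqrt{rs})$ requires a full Wada-type analysis of which products of $\varepsilon_2,\varepsilon_q,\varepsilon_{2q},\varepsilon_{rs},\varepsilon_{2rs},\varepsilon_{qrs},\varepsilon_{2qrs}$ become squares, using the explicit square-root expressions of Lemma~\ref{lem0}. The paper spends all of Lemma~\ref{lemmunitstriquad} on the corresponding computation for~$\FF'$; the one for $K_1$ is of comparable length and is not a formality. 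Either carry it out, or follow the paper and work with $\FF'$ (whose biquadratic subfields $\mathbb{Q}(\sqrt 2,\sqrt s)$, $\mathbb{Q}(\sqrt 2,\sqrt{qr})$, $\mathbb{Q}(\sqrt 2,\sqrt{qrs})$ have slightly cleaner known unit groups) and then cite Proposition~\ref{LemBenjShn}.

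Two minor corrections: the relevant case in Lemma~\ref{realTruquad2-rankq} is $\C1$ (second bullet), not $\C2$; and in your Kuroda formula for $K$ you have $h_2(rs)=2$, but $r\equiv s\equiv 3\pmod 4$ forces $h_2(rs)=1$ by Lemma~\ref{realQuad}, so the correct line is $h_2(K)=\tfrac14\cdot q(K)\cdot 1\cdot 1\cdot 2^m=2^{m-1}$ with $q(K)=2$ (as in the paper).
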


To prove this theorem, we need the following preparatory  lemmas.

\begin{lemma}[\cite{Az-00}, Lemme 5]\label{lem2}
	Let $d>1$ be a square free integer and $\varepsilon_d=x+y\sqrt d$, where $x$, $y$ are  integers or semi-integers. If $N(\varepsilon_d)=1$, then $2(x+1)$, $2(x-1)$, $2d(x+1)$ and $2d(x-1)$ are not squares in $\QQ$.
\end{lemma}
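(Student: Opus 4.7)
The plan is to derive, in each of the four cases, a contradiction with the fundamentality of $\varepsilon_d$ by producing a square root of $\varepsilon_d$ inside $\mathcal O_{\QQ(\sqrt d)}$. The starting point is $N(\varepsilon_d)=1$, which in the integer case reads $x^2-dy^2=1$, and after multiplying by $4$ in the semi-integer case becomes $x'^2-dy'^2=4$ with $x'=2x$, $y'=2y$ odd. This yields the factorizations $(x-1)(x+1)=dy^2$ and $(x'-2)(x'+2)=dy'^2$, and I would exploit the squarefreeness of $d$ via a standard squarefree-descent on Pell's equation.

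First I would handle $x\in\ZZ$. A parity argument restricts attention to $x$ odd and $y$ even (if $x$ is even then $x\pm 1$ are odd, coprime, and forced to be squarefree-coprime divisors of $d$, and an elementary check shows directly that none of $2(x\pm 1)$, $2d(x\pm 1)$ can be a square, using only squarefreeness of $d$). Writing $y=2y_1$ and $A=(x-1)/2$, $B=(x+1)/2$ with $\gcd(A,B)=1$ and $AB=dy_1^2$, the uniqueness of squarefree decomposition produces coprime squarefree integers $a,b$ with $ab=d$, $A=a\alpha^2$, $B=b\beta^2$, $y_1=\alpha\beta$. The four identities
\[
2(x-1)=(2\alpha)^2 a,\quad 2(x+1)=(2\beta)^2 b,\quad 2d(x-1)=(2a\alpha)^2 b,\quad 2d(x+1)=(2b\beta)^2 a
\]
then collapse all four hypotheses into the single dichotomy $a=1$ or $b=1$.

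In the case $a=1$ one has $x-1=2\alpha^2$ and $x+1=2d\beta^2$, hence $\alpha^2-d\beta^2=-1$; the element $u:=\alpha+\beta\sqrt d\in\mathcal O_{\QQ(\sqrt d)}$ then has $N(u)=-1$ and
\[
u^2=(\alpha^2+d\beta^2)+2\alpha\beta\sqrt d=x+y\sqrt d=\varepsilon_d,
\]
contradicting fundamentality. The case $b=1$ is symmetric, producing $v=\beta+\alpha\sqrt d$ of norm $+1$ with $v^2=\varepsilon_d$, again a contradiction.

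For the semi-integer case ($d\equiv 1\pmod 4$, with $x=x'/2$, $y=y'/2$, $x',y'$ odd), the argument is formally identical: $\gcd(x'-2,x'+2)=1$ gives the same squarefree decomposition $x'-2=a\alpha^2$, $x'+2=b\beta^2$, $y'=\alpha\beta$, the same four identities (now without the extra factor of $2$ that was pulled out in the odd-$x$ case), and the same dichotomy $a=1$ or $b=1$. The candidate square root is now $u=(\alpha+\beta\sqrt d)/2$ (resp.\ $v=(\beta+\alpha\sqrt d)/2$), and a direct computation gives $u^2=\varepsilon_d$. The step that I expect to require the most care — and which is the only genuine obstacle — is verifying that $u$ actually lies in $\mathcal O_{\QQ(\sqrt d)}=\ZZ[(1+\sqrt d)/2]$; this reduces to showing $\alpha\equiv\beta\pmod 2$, which holds because $\alpha^2=x'-2$ and $d\beta^2=x'+2$ are both odd (since $x'$ and $d$ are odd), so $\alpha$ and $\beta$ are both odd. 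Once this parity check is in hand, the fundamentality of $\varepsilon_d$ is contradicted exactly as in the integer case, completing the proof.
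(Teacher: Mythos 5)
The paper does not prove this lemma at all: it is imported verbatim as Lemme~5 of the cited reference [Az-00], so there is no in-paper argument to compare against. Your proof is correct and is essentially the classical one behind that citation: squarefree descent on $(x-1)(x+1)=dy^{2}$ (resp.\ $(x'-2)(x'+2)=dy'^{2}$) reduces all four hypotheses to the dichotomy $a=1$ or $b=1$, each of which manufactures a unit $u$ with $u^{2}=\varepsilon_d$, contradicting fundamentality; the integrality check $\alpha\equiv\beta\pmod 2$ in the half-integer case is handled correctly. The only blemish is the parenthetical in the even-$x$ case (``forced to be squarefree-coprime divisors of $d$'' is not quite what happens); what actually closes that case is simply that each of the four quantities is twice an odd number, hence $\equiv 2\pmod 4$ and not a square — which your ``elementary check'' claim covers in substance.
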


\begin{lemma}\label{lem0}
Let $r\equiv q\equiv     3\pmod 8$ and  $s\equiv    7\pmod 8$   be   prime numbers such that $\left(\frac{q}{r}\right)=\left(\frac{q}{s}\right)=-1=-\left(\frac{s}{r}\right)$.
 
	\begin{enumerate}[\rm1)]
		\item Let $a$   and $b$  be the integers such that
		$ \varepsilon_{qrs}=a+b\sqrt{qrs}$. Then
		out of the three numbers   
		 $2q(a-1)$, $r(a-1)$ or $s(a-1)$, exactly one of them is   a square in $\NN $. Furthermore, we have:
		\begin{enumerate}[$ a)$]
			\item  If $2q(a-1)$ is a square in $\NN $, then $\sqrt{\varepsilon_{qrs}}= b_1\sqrt{q} +b_2\sqrt{rs}$ and $1=-qb_1^2+rsb_2^2$, where $b_1$ and $b_2$ are two integers such that $b=2b_1b_2$. 
			\item  If $r(a-1)$ is a square in $\NN $, then $\sqrt{2\varepsilon_{qrs}}= b_1\sqrt{r} +b_2\sqrt{qs}$ and $2=-rb_1^2+qsb_2^2$, where $b_1$ and $b_2$ are two integers such that $b=b_1b_2$.

			\item If $s(a-1)$ is a square in $\NN $, then $\sqrt{2\varepsilon_{qrs}}= b_1\sqrt{s} +b_2\sqrt{qr}$ and $2=-sb_1^2+qrb_2^2$, where $b_1$ and $b_2$ are two integers such that $b=b_1b_2$. 
		\end{enumerate}
 
		\item Let $x$ and $y$   be the integers such that
		  $\varepsilon_{2qrs}=x+y\sqrt{2qrs}$. Then $s(x-1)$  is a square in $\NN $. Moreover, we have:
		 $$\sqrt{2\varepsilon_{2qrs}}= y_1\sqrt{s} +y_2\sqrt{2qr}  \text{ and } 2=-sy_1^2+2qry_2^2.$$
	 \end{enumerate}
\end{lemma}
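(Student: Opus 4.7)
The plan is to analyze the Pell-type equations coming from $N(\varepsilon_{qrs})$ and $N(\varepsilon_{2qrs})$, using unique factorization together with Legendre-symbol obstructions on the possible factorizations of $a\pm 1$ and $x\pm 1$.

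First, since $q\equiv 3\pmod 4$ divides both $qrs$ and $2qrs$, both fundamental units have norm $+1$, so $(a-1)(a+1)=qrs\,b^{2}$ and $(x-1)(x+1)=2qrs\,y^{2}$. For part $(1)$, a short mod-$8$ parity check (using $qrs\equiv 7\pmod 8$) rules out $a$ and $b$ both odd; hence $\gcd(a-1,a+1)\in\{1,2\}$, and unique factorization distributes $\{q,r,s\}$ between the two factors. This yields sixteen a priori patterns: eight with $a$ even, in which $a-1$ equals a square times a subset $\delta\subseteq\{q,r,s\}$, and eight with $a$ odd, in which $(a-1)/2$ equals a square times such a $\delta$.

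Second, I rule out thirteen of these patterns. Lemma~\ref{lem2} directly eliminates the two patterns in which $a\pm 1\in\{2u^{2},\,2qrs\,u^{2}\}$. For each of the remaining eleven bad patterns, reducing the identity $(a+1)-(a-1)=2$ modulo an appropriate prime $p\in\{q,r,s\}$ yields a Legendre-symbol equation of the form $\bigl(\tfrac{\pm 2\delta}{p}\bigr)=1$, and the hypotheses combined with the supplementary laws force the left-hand side to equal $-1$. Concretely one uses $\bigl(\tfrac{q}{r}\bigr)=\bigl(\tfrac{q}{s}\bigr)=-1$ and $\bigl(\tfrac{s}{r}\bigr)=1$ (whence $\bigl(\tfrac{r}{s}\bigr)=-1$ by quadratic reciprocity, since $r,s\equiv 3\pmod 4$), together with $\bigl(\tfrac{-1}{p}\bigr)=-1$ for each of $p\in\{q,r,s\}$, $\bigl(\tfrac{2}{q}\bigr)=\bigl(\tfrac{2}{r}\bigr)=-1$, $\bigl(\tfrac{2}{s}\bigr)=+1$, and the reciprocity consequences $\bigl(\tfrac{s}{q}\bigr)=\bigl(\tfrac{r}{q}\bigr)=+1$. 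The three surviving patterns are exactly those with $a-1\in\{2qb_{1}^{2},\,rb_{1}^{2},\,sb_{1}^{2}\}$, giving respectively $2q(a-1)=(2qb_{1})^{2}$, $r(a-1)=(rb_{1})^{2}$, and $s(a-1)=(sb_{1})^{2}$; these three alternatives are pairwise exclusive, since the squarefree part of $a-1$ is an invariant of $\varepsilon_{qrs}$.

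Third, in each surviving case the square-root formula follows by direct algebraic verification. When $a-1=2qb_{1}^{2}$ and $a+1=2rsb_{2}^{2}$, subtraction gives $rsb_{2}^{2}-qb_{1}^{2}=1$, and then $(b_{1}\sqrt{q}+b_{2}\sqrt{rs})^{2}=qb_{1}^{2}+rsb_{2}^{2}+2b_{1}b_{2}\sqrt{qrs}=a+b\sqrt{qrs}=\varepsilon_{qrs}$ with $b=2b_{1}b_{2}$, yielding case $(a)$. The other two cases are identical except that the Pell relation now reads $qsb_{2}^{2}-rb_{1}^{2}=2$ (resp.\ $qrb_{2}^{2}-sb_{1}^{2}=2$), whence the expression for $\sqrt{2\varepsilon_{qrs}}$ rather than $\sqrt{\varepsilon_{qrs}}$.

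Fourth, for part $(2)$, I repeat the enumeration on $(x-1)(x+1)=2qrs\,y^{2}$; a parity analysis forces $x$ odd and $y$ even, and sixteen patterns again arise. Because $\bigl(\tfrac{2}{s}\bigr)=+1$ while $\bigl(\tfrac{2}{q}\bigr)=\bigl(\tfrac{2}{r}\bigr)=-1$, the same Legendre-symbol machinery now singles out $s$ as the only prime that may be paired with $2$ on the side of $x-1$, eliminating every pattern except $x-1=4sy_{1}'^{2}$, $x+1=4qry_{2}^{2}$. This gives $s(x-1)=(2sy_{1}')^{2}$, and with $y_{1}=2y_{1}'$ the relation $2qry_{2}^{2}-sy_{1}^{2}=2$ produces $(y_{1}\sqrt{s}+y_{2}\sqrt{2qr})^{2}=2\varepsilon_{2qrs}$. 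The main obstacle is organizational rather than conceptual: one must manage the sixteen factorization patterns in each half and match each bad pattern to the correct Legendre-symbol contradiction, but once the table of supplementary values $\bigl(\tfrac{\cdot}{\cdot}\bigr)$ on $\{-1,2,q,r,s\}$ modulo $q,r,s$ is tabulated, the eliminations are mechanical.
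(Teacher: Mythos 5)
Your proof follows essentially the same route as the paper's: both derive $(a-1)(a+1)=qrs\,b^{2}$ (resp.\ $(x-1)(x+1)=2qrs\,y^{2}$) from $N(\varepsilon)=1$, enumerate the factorization patterns via unique factorization together with Lemma~\ref{lem2}, eliminate the bad patterns with exactly the Legendre-symbol table you describe, and obtain the square-root expressions by adding and subtracting the two equations of the surviving system; your enumeration is in fact slightly more complete, since you also treat the patterns $a\mp1=u^{2}$, $a\pm1=qrs\,v^{2}$, which the paper's list of six systems omits. The only slip is the line ``$x+1=4qry_{2}^{2}$'', which should read $x+1=2qry_{2}^{2}$, as your own relation $2qry_{2}^{2}-sy_{1}^{2}=2$ confirms.
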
	 

\begin{proof}
		\begin{enumerate}[\rm1)]
		\item 
	As $ N(\varepsilon_{qrs})=1 $, then by the unique factorization  of $ a^{2}-1=qrsb^{2} $ in $ \mathbb{Z} $, and Lemma \ref{lem2}, there exist $b_1$ and $b_2$ in $\mathbb{Z}$ such that  we have exactly one of the following systems:
	$$(1):\ \left\{ \begin{array}{ll}
		a\pm1=2qb_1^2\\
		a\mp1=2rsb_2^2,
	\end{array}\right.  \quad
	(2):\ \left\{ \begin{array}{ll}
		a\pm1=2rb_1^2\\
		a\mp1=2qsb_2^2,
	\end{array}\right. \quad
	(3):\ \left\{ \begin{array}{ll}
		a\pm1=2sb_1^2\\
		a\mp1=2qrb_2^2,
	\end{array}\right. 
	$$

	$$   
	(4):\ \left\{ \begin{array}{ll}
		a\pm1=qb_1^2\\
		a\mp1=rsb_2^2,
	\end{array}\right. \quad (5):\ \left\{ \begin{array}{ll}
	a\pm1=rb_1^2\\
	a\mp1=qsb_2^2
\end{array}\right.\text{ or }(6):\ \left\{ \begin{array}{ll}
a\pm1=sb_1^2\\
a\mp1=qrb_2^2.
\end{array}\right.
	$$

	\begin{enumerate}[\rm$\bullet$]
	\item  Assume that the system $\left\{ \begin{array}{ll}
		a+1=2qb_1^2\\
		a-1=2rsb_2^2,
	\end{array}\right.$ holds. We have:	
	\[1=\left(\dfrac{2qb_1^2}{r}\right)=\left(\dfrac{a+1}{r}\right)=\left(\dfrac{a-1+2}{r}\right)=\left(\dfrac{2rsb_2^2+2}{r}\right)=\left(\dfrac{2}{r}\right)=-1,
	\]
	which is absurd. So this system can not hold.
	
	\item Assume that the system $\left\{ \begin{array}{ll}
		a+1=qb_1^2\\
		a-1=rsb_2^2 
	\end{array}\right.$ holds. We have:	
	\[-1=\left(\dfrac{qb_1^2}{s}\right)=\left(\dfrac{a+1}{s}\right)=\left(\dfrac{a-1+2}{s}\right) =\left(\dfrac{2}{s}\right)=1,
	\]
	which is absurd. So this system also can not hold.
 \end{enumerate}
We similarly eliminate all systems except the following:
$$(1):\ \left\{ \begin{array}{ll}
	a-1=2qb_1^2\\
	a+1=2rsb_2^2,
\end{array}\right.  \quad \left\{ \begin{array}{ll}
a-1=rb_1^2\\
a+1=qsb_2^2
\end{array}\right. \quad \text{ and } \left\{ \begin{array}{ll}
a-1=sb_1^2\\
a+1=qrb_2^2.
\end{array}\right.$$
Suppose that the left system holds, i.e. $2q(a-1)$ is a square in $\NN$. So by summing and   subtracting the two equalities, 
we get $a=qb_1^2+ rsb_2^2$ and    $1=-qb_1^2+rsb_2^2$. Thus, 
$\varepsilon_{qrs}=a+b\sqrt{qrs}=qb_1^2+ rsb_2^2+2b_1b_2\sqrt{q}\sqrt{rs}$ and so $\varepsilon_{qrs}=( b_1\sqrt{q} +b_2\sqrt{rs})^2$.
Therefore, we have $1)$-$a)$. We similarly, we get the rest of our first item. 
\item For the second item, we proceed similarly and eliminate all systems except $\left\{ \begin{array}{ll}
	x-1=sy_1^2\\
	x+1=2qry_2^2,
\end{array}\right. $ where $y_1$ and $y_2$ are two integers such that $y=2y_1y_2$. This  gives the second item.
\end{enumerate}
 \end{proof}

\begin{corollary}
	The unit group of  $F=\mathbb{Q}(\sqrt{2},\sqrt{qrs})$ is $\{\varepsilon_2, \varepsilon_{qrs}, \sqrt{ \varepsilon_{qrs} \varepsilon_{2qrs} }\}$ or $\{\varepsilon_2, \varepsilon_{qrs}, \varepsilon_{2qrs}\}$ according to whether  $s(a-1)$  is a square in $\NN $ or not.
\end{corollary}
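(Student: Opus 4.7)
The plan is to apply Wada's procedure (see page \pageref{algo wada}) to $F = \mathbb{Q}(\sqrt{2}, \sqrt{qrs})$. Its three quadratic subfields $\mathbb{Q}(\sqrt{2}), \mathbb{Q}(\sqrt{qrs}), \mathbb{Q}(\sqrt{2qrs})$ have fundamental units $\varepsilon_2, \varepsilon_{qrs}, \varepsilon_{2qrs}$, so a fundamental system of units of $F$ is obtained from $\{-1,\varepsilon_2,\varepsilon_{qrs},\varepsilon_{2qrs}\}$ together with the square roots of products $\pm\varepsilon_2^{i}\varepsilon_{qrs}^{j}\varepsilon_{2qrs}^{k}$ ($i,j,k\in\{0,1\}$) that happen to lie in $F$. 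Each such new square root doubles the unit index $q(F)$.

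The central computation uses Lemma \ref{lem0}. Part~2 gives $\sqrt{2\varepsilon_{2qrs}} = y_1\sqrt{s} + y_2\sqrt{2qr}$ in all cases. In \emph{Case A}, where $s(a-1)$ is a square, Part~1(c) gives $\sqrt{2\varepsilon_{qrs}} = b_1\sqrt{s} + b_2\sqrt{qr}$, so a direct expansion yields
\[
\sqrt{2\varepsilon_{qrs}}\cdot\sqrt{2\varepsilon_{2qrs}} \;=\; b_1 y_1\, s + b_2 y_2\, qr\sqrt{2} + b_2 y_1 \sqrt{qrs} + b_1 y_2 \sqrt{2qrs} \;\in\; F,
\]
which shows $\sqrt{\varepsilon_{qrs}\varepsilon_{2qrs}}\in E_F$. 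In \emph{Case B} (where $2q(a-1)$ or $r(a-1)$ is a square instead), the same expansion using Part~1(a) or 1(b) becomes a $\mathbb{Q}$-linear combination of four basis elements of $\mathbb{Q}(\sqrt{2},\sqrt{q},\sqrt{r},\sqrt{s})$, namely $\{\sqrt{2qs},\sqrt{r},\sqrt{2r},\sqrt{qs}\}$ (resp.\ $\{\sqrt{rs},\sqrt{2q},\sqrt{q},\sqrt{2rs}\}$), each of which lies outside $F$; since the equations $-qb_1^2+rsb_2^2=1$ (or analogous) and $-sy_1^2+2qry_2^2=2$ force every $b_i, y_j$ to be nonzero, the sum cannot lie in $F$, so $\sqrt{\varepsilon_{qrs}\varepsilon_{2qrs}}\notin F$.

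The remaining nontrivial classes are eliminated uniformly. Any class of the form $-\varepsilon_2^{i}\varepsilon_{qrs}^{j}\varepsilon_{2qrs}^{k}$ with the associated $\varepsilon$-product positive is a negative real number, hence its square root is imaginary and cannot belong to the totally real field $F$. The classes $\varepsilon_{qrs}$ and $\varepsilon_{2qrs}$ are excluded directly: the explicit expressions of $\sqrt{\varepsilon_{qrs}}$ (resp.\ $\sqrt{2\varepsilon_{qrs}}$) and of $\sqrt{2\varepsilon_{2qrs}}$ provided by Lemma \ref{lem0} all live in biquadratic fields strictly different from $F$. For classes of the form $\varepsilon_2\cdot u$ with $u\in\{1,\varepsilon_{qrs},\varepsilon_{2qrs},\varepsilon_{qrs}\varepsilon_{2qrs}\}$, let $\sigma_1$ denote the automorphism of $F$ fixing $\mathbb{Q}(\sqrt{qrs})$; then $\sigma_1(\varepsilon_2)=1-\sqrt{2}=-\varepsilon_2^{-1}$ and $\sigma_1(u)>0$, so $\sigma_1(\varepsilon_2 u)<0$. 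Any $\alpha\in F$ with $\alpha^2=\varepsilon_2 u$ would satisfy $\sigma_1(\alpha)^2<0$, which is incompatible with $\sigma_1(\alpha)\in F\subset\mathbb{R}$.

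Combining these observations, the only nontrivial class among the sixteen that admits a square root in $F$ is $\varepsilon_{qrs}\varepsilon_{2qrs}$, and this occurs precisely when $s(a-1)$ is a square in $\mathbb{N}$. Consequently $q(F)=2$ with fundamental system $\{\varepsilon_2,\varepsilon_{qrs},\sqrt{\varepsilon_{qrs}\varepsilon_{2qrs}}\}$ in Case A, whereas $q(F)=1$ with fundamental system $\{\varepsilon_2,\varepsilon_{qrs},\varepsilon_{2qrs}\}$ in Case B, as claimed. The main obstacle is the Case~B non-squareness verification: one has to check that the four basis elements appearing in the expansion of $\sqrt{2\varepsilon_{qrs}}\cdot\sqrt{2\varepsilon_{2qrs}}$ are linearly independent over $\mathbb{Q}$ and disjoint from the $F$-part $\{1,\sqrt{2},\sqrt{qrs},\sqrt{2qrs}\}$ of the basis of $\mathbb{Q}(\sqrt{2},\sqrt{q},\sqrt{r},\sqrt{s})$, together with the nonvanishing of all parameters $b_i, y_j$ supplied by Lemma \ref{lem0}.
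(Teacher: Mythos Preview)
Your proof is correct and follows exactly the approach the paper intends: the corollary is stated without proof in the paper as an immediate consequence of Lemma~\ref{lem0} via Wada's method, and you have simply written out that deduction in full. Your treatment of the sixteen classes $\pm\varepsilon_2^{i}\varepsilon_{qrs}^{j}\varepsilon_{2qrs}^{k}$ is complete, and the $\sigma_1$-negativity trick for the $\varepsilon_2$-factor classes is a clean way to dispose of those cases.
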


\begin{remark}\label{remhF}To link this corollary with    Remark \ref{remh2F}, we have $q(F)=1$ if and only if  $s(a-1)$  is not a square in $\NN $. Hence, 
	$h_2(F)=2^m$ if and only if $s(a-1)$  is not a square in $\NN $ and $h_2(qrs)=2^m$.
	\end{remark}

\begin{lemma}\label{lemmunitstriquad}
	Let $r\equiv q\equiv     3\pmod 8$ and  $s\equiv    7\pmod 8$   be   prime numbers such that $\left(\frac{q}{s}\right)=\left(\frac{q}{r}\right)=-1=-\left(\frac{s}{r}\right)$.
 Let $a$   and $b$  be the integers such that
$ \varepsilon_{qrs}=a+b\sqrt{qrs}$.
Assume that   $s(a-1)$  is not a square in $\NN $. Then the unit group of $\FF'=\mathbb{Q}(\sqrt{2},\sqrt{qr}, \sqrt{s})$ is
 $$E_{\FF'}=\langle-1,   \varepsilon_2,  \varepsilon_{qr}     ,\sqrt{\varepsilon_{s}}, \sqrt{\varepsilon_{2s}}, \sqrt{\varepsilon_{2qr}},\sqrt{\varepsilon_{qr}\varepsilon_{qrs}}, \sqrt{ \varepsilon_{2qrs}} \rangle.$$
 Thus, $q(\FF')=2^5$ and $h_2(\FF')=\frac{1}{2}h_2(qrs)$.
\end{lemma}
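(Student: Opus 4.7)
The plan is to apply Wada's method on page~\pageref{algo wada} and the multiquadratic class-number formula in Lemma~\ref{wada's f.}. First I would list the seven quadratic subfields
\[
\QQ(\sqrt{2}),\ \QQ(\sqrt{qr}),\ \QQ(\sqrt{s}),\ \QQ(\sqrt{2qr}),\ \QQ(\sqrt{2s}),\ \QQ(\sqrt{qrs}),\ \QQ(\sqrt{2qrs}),
\]
noting that, under our congruences, Lemma~\ref{realQuad} gives $h_2(2)=h_2(qr)=h_2(s)=1$. The remaining three nontrivial $2$-class numbers $h_2(2qr)$, $h_2(2s)$, $h_2(2qrs)$ are each equal to $2$ under the stated hypotheses, by the classical parity/divisibility results of Kaplan and Connor--Hurrelbrink (cf.~\cite{kaplan76,connor88}); in particular $h_2(2qr)\,h_2(2s)\,h_2(2qrs)=2^{3}$.

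Next I would verify that each of the five displayed square roots $\sqrt{\varepsilon_s},\sqrt{\varepsilon_{2s}},\sqrt{\varepsilon_{2qr}},\sqrt{\varepsilon_{qr}\varepsilon_{qrs}},\sqrt{\varepsilon_{2qrs}}$ lies in $\FF'$. Because $s\equiv 7\pmod 8$, the formulas for square roots of units already recalled in the proof of Lemma~\ref{realTruquad2-rankq} place $\sqrt{\varepsilon_s},\sqrt{\varepsilon_{2s}}$ in $\QQ(\sqrt{2},\sqrt{s})\subset\FF'$. Because $qr\equiv 1\pmod 8$, a Legendre-symbol argument exactly parallel to Remark~\ref{remq=r=7mod8}\,(2), applied to the discriminant $2qr$, solves the equation $x^{2}-2qr\,y^{2}=\pm 2$ (possibly after multiplying one side by $q$ or $r$) and places $\sqrt{\varepsilon_{2qr}}$ in $\QQ(\sqrt{2},\sqrt{qr})\subset\FF'$. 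Finally, the second half of Lemma~\ref{lem0} yields $\sqrt{2\varepsilon_{2qrs}}=y_{1}\sqrt{s}+y_{2}\sqrt{2qr}\in\FF'$, hence $\sqrt{\varepsilon_{2qrs}}\in\FF'$; and the first half of Lemma~\ref{lem0}, combined with the hypothesis that $s(a-1)$ is \emph{not} a square, implies that $\sqrt{\varepsilon_{qrs}}\notin\FF'$ individually but that the mixed product $\sqrt{\varepsilon_{qr}}\cdot\sqrt{\varepsilon_{qrs}}=\sqrt{\varepsilon_{qr}\varepsilon_{qrs}}$ does land in $\FF'$ after matching the $\sqrt{r},\sqrt{qs}$ (or $\sqrt{q},\sqrt{rs}$) contributions of $\sqrt{2\varepsilon_{qrs}}$ against the analogous decomposition of $\sqrt{\varepsilon_{qr}}$.

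Third, I would assemble a fundamental system of units of $\FF'$ by Wada's iterative procedure, viewing $\FF'$ as $F(\sqrt{qr})$ with $F=\QQ(\sqrt{2},\sqrt{qrs})$, and invoking the preceding Corollary---which, \emph{precisely because} $s(a-1)$ is not a square, gives $E_{F}=\langle -1,\varepsilon_2,\varepsilon_{qrs},\varepsilon_{2qrs}\rangle$ with $q(F)=1$. Together with the unit groups of the remaining six biquadratic subfields (computed analogously by Wada on each), the generators displayed in the statement exhaust every square root of a product of subfield units that can lie in $\FF'$. Since three of the eight generators ($-1$, $\varepsilon_2$, $\varepsilon_{qr}$) lie in $\prod_{i=1}^{7}E_{k_i}$ and the other five are half-powers, one obtains
\[
q(\FF')=[E_{\FF'}:\textstyle\prod_{i=1}^{7}E_{k_i}]=2^{5}.
\]
Applying Lemma~\ref{wada's f.} with $n=3$ and $v=3(2^{2}-1)=9$ then gives
\[
h_2(\FF')=\frac{q(\FF')}{2^{9}}\prod_{i=1}^{7}h_2(k_i)=\frac{2^{5}}{2^{9}}\,h_2(2qr)\,h_2(2s)\,h_2(qrs)\,h_2(2qrs)=\tfrac{1}{2}\,h_2(qrs).
\]

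The main obstacle will be the independence of the five half-power classes in $E_{\FF'}/\prod_{i=1}^{7}E_{k_i}$, equivalently, the non-existence of any further square root of a product of the seven fundamental units that lies in $\FF'$. The decisive leverage is the assumption $s(a-1)\notin(\NN)^{2}$: as recorded in the preceding Corollary and Remark~\ref{remhF}, exactly this hypothesis excludes the extra square root $\sqrt{\varepsilon_{qrs}\varepsilon_{2qrs}}$ already at the biquadratic level $F$, and I expect this exclusion to propagate through the remaining two Wada steps to block every additional candidate square root at the triquadratic level~$\FF'$.
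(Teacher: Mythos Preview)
Your overall strategy---Wada's method plus Lemma~\ref{wada's f.}---is exactly what the paper uses, and your verification that the five displayed square roots lie in $\FF'$ is correct. However, the proposal has a genuine gap at the point you yourself flag as ``the main obstacle'': you never actually prove that no \emph{further} square root of a product of the subfield units lies in $\FF'$. Saying ``I expect this exclusion to propagate'' is not an argument, and it is precisely here that the hypothesis $s(a-1)\notin(\NN)^2$ must do real work at the triquadratic level, not just at the biquadratic level of $F$.

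The paper handles this systematically. It fixes the three biquadratic subfields $k_1=\QQ(\sqrt{2},\sqrt{s})$, $k_2=\QQ(\sqrt{2},\sqrt{qr})$, $k_3=\QQ(\sqrt{2},\sqrt{qrs})$, writes a generic candidate $\xi^2=\varepsilon_2^{a}\varepsilon_{qr}^{b}\varepsilon_{qrs}^{c}\varepsilon_{2qrs}^{d}\sqrt{\varepsilon_s}^{\,e}\sqrt{\varepsilon_{2s}}^{\,f}\sqrt{\varepsilon_{2qr}}^{\,g}$, and then applies the norms $N_{\FF'/k_1}=1+\tau_2$, $N_{\FF'/k_4}=1+\tau_1$ (with $k_4=\QQ(\sqrt{s},\sqrt{qr})$), and $N_{\FF'/k_5}=1+\tau_1\tau_3$ (with $k_5=\QQ(\sqrt{2s},\sqrt{qr})$) in succession. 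Each norm kills certain exponents by a sign/positivity argument or by the fact that a specific unit is not a square in the target biquadratic field, eventually reducing to $\xi^2=\varepsilon_{qr}^{b}\varepsilon_{qrs}^{c}\varepsilon_{2qrs}^{d}$. Only then does the explicit shape of $\sqrt{\varepsilon_{qr}}$, $\sqrt{\varepsilon_{qrs}}$, $\sqrt{\varepsilon_{2qrs}}$ from Lemma~\ref{lem0} decide which of these eight products are squares in $\FF'$. Your proposal skips this norm-elimination step entirely; without it you cannot rule out, for example, a putative square root involving $\varepsilon_2$ or $\sqrt{\varepsilon_s}$ nontrivially.

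A secondary issue: your claim that $h_2(2qr)$, $h_2(2s)$, $h_2(2qrs)$ are \emph{each} equal to $2$ is incorrect. Since $s\equiv 7\pmod 8\equiv 3\pmod 4$, Lemma~\ref{realQuad} gives $h_2(2s)=1$, and under the present hypotheses one has $h_2(2qrs)=4$ (cf.\ Remark~\ref{remh2F}) and $h_2(2qr)=2$. The product is still $2^3$, so your final formula for $h_2(\FF')$ survives, but the justification you gave for the individual factors is wrong.
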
	 
 \begin{proof}
Put $k_1=\mathbb{Q}(\sqrt{2},  \sqrt{s})$, $k_2=\mathbb{Q}(\sqrt{2},  \sqrt{qr})$ and $k_3=\mathbb{Q}(\sqrt{2},  \sqrt{qrs})$. The fundamental system   of units of 
$k_1$, $k_2$ and $k_3$ are $\{\varepsilon_2,\sqrt{\varepsilon_{s}}, \sqrt{\varepsilon_{2s}}\}$, $\{\varepsilon_2, \varepsilon_{qr}, \sqrt{\varepsilon_{2qr}}\}$ and $\{\varepsilon_2, \varepsilon_{qrs}, \varepsilon_{2qrs}\}$ respectively. 
Let $\tau_1$, $\tau_2$ and $\tau_3$ be the elements of  $ \mathrm{Gal}(\FF'/\QQ)$ defined by
\begin{center}	\begin{tabular}{l l l }
		$\tau_1(\sqrt{2})=-\sqrt{2}$, \qquad & $\tau_1(\sqrt{qr})=\sqrt{qr}$, \qquad & $\tau_1(\sqrt{s})=\sqrt{s},$\\
		$\tau_2(\sqrt{2})=\sqrt{2}$, \qquad & $\tau_2(\sqrt{qr})=-\sqrt{qr}$, \qquad &  $\tau_2(\sqrt{s})=\sqrt{s},$\\
		$\tau_3(\sqrt{2})=\sqrt{2}$, \qquad &$\tau_3(\sqrt{qr})=\sqrt{qr}$, \qquad & $\tau_3(\sqrt{s})=-\sqrt{s}.$
	\end{tabular}
\end{center}
Note that  $\mathrm{Gal}(\FF'/\QQ)=\langle \tau_1, \tau_2, \tau_3\rangle$
and the subfields  $k_1$, $k_2$ and $k_3$ are
fixed by  $\langle \tau_2\rangle$, $\langle\tau_3\rangle$ and $\langle\tau_2\tau_3\rangle$ respectively. Therefore,\label{fsu preparations} a fundamental system of units  of $\FF'$ consists  of seven  units chosen from those of $k_1$, $k_2$ and $k_3$, and  from the square roots of the elements of $E_{k_1}E_{k_2}E_{k_3}$ which are squares in $\FF'$ (see Page \pageref{algo wada}). With these notations, we have:
$$E_{k_1}E_{k_2}E_{k_3}=\langle-1,   \varepsilon_2,  \varepsilon_{qr},   \varepsilon_{qrs}, \varepsilon_{2qrs} ,\sqrt{\varepsilon_{s}}, \sqrt{\varepsilon_{2s}}, \sqrt{\varepsilon_{2qr}} \rangle.$$	
Thus we shall determine elements of $E_{k_1}E_{k_2}E_{k_3}$ which are squares in $\FF'$. Let  $\xi$ be an element of $\FF'$ which is a  square root of an element of $E_{k_1}E_{k_2}E_{k_3}$. We can assume that
$$\xi^2=\varepsilon_2^a  \varepsilon_{qr}^b   \varepsilon_{qrs}^c \varepsilon_{2qrs}^d\sqrt{\varepsilon_{s}}^e \sqrt{\varepsilon_{2s}}^f \sqrt{\varepsilon_{2qr}}^g,$$
where $a, b, c, d, e, f$ and $g$ are in $\{0, 1\}$.
By \cite[Lemma 3.2]{CEH2024} and \cite[Lemma 4]{AzchemsTrends}, we have: 

$$ \sqrt{\varepsilon_{s}}=\frac{1}{\sqrt{2}}(\beta_1+\beta_2\sqrt{s} ) \text{ and } 2=\beta_1^2- s\beta_2^2,    $$
$$\sqrt{\varepsilon_{2s}}=\frac{1}{\sqrt{2}}(d_1+d_2\sqrt{2s} ) \text{ and } 2=d_1^2-2sd_2^2   $$
and
$$ \sqrt{\varepsilon_{2qr}}=\frac{1}{\sqrt{2}}(y_1+y_2\sqrt{2qr} )  \text{ and } 2=-y_1^2+ 2qry_2^2. $$
for some integers $\beta_1$, $\beta_2$, $d_1$, $d_2$, $y_1$ and $y_2$.

\noindent\ding{224} By the above equalities, we have  
  $\sqrt{\varepsilon_{s}}^{1+ \tau_2}=\varepsilon_{s}$,  $\sqrt{\varepsilon_{2s}}^{1+\tau_2}=\varepsilon_{2s}$ and 
$\sqrt{\varepsilon_{2qr}}^{1+ \tau_2}=-1$.  Thus, by applying the norm $N_{\FF'/k_1}=1+ \tau_2$, we get:
\begin{eqnarray*}
	N_{\FF'/k_1}(\xi^2)=N_{\FF'/k_1}(\xi)^2&=&\varepsilon_{2}^{2a}\cdot 1\cdot 1\cdot1\cdot\varepsilon_{s}^{e}\cdot \varepsilon_{2s}^{f}\cdot (-1)^g.
\end{eqnarray*}
Since the left   side of the equality is positive, this implies that $g=0$. Thus, 
$$\xi^2=\varepsilon_2^a  \varepsilon_{qr}^b   \varepsilon_{qrs}^c \varepsilon_{2qrs}^d\sqrt{\varepsilon_{s}}^e \sqrt{\varepsilon_{2s}}^f,$$

\noindent\ding{224} Let $k_4=\mathbb{Q}(\sqrt{s},\sqrt{qr})$. By applying the norm $N_{\mathbb F'/k_4}=1+ \tau_1$, we get
\begin{eqnarray*}
	N_{\FF'/k_4}(\xi^2) &=&(-1)^{a}\cdot \varepsilon_{qr}^{2b} \cdot   \varepsilon_{qrs}^{2c}\cdot 1\cdot1\cdot(-1)^{e}\cdot\varepsilon_{s}^{e}\cdot (-1)^{f},\\
	&=& \varepsilon_{qr}^{2b}\varepsilon_{qrs}^{2c} (-1)^{a+e+f}\varepsilon_{s}^{e}.
\end{eqnarray*}
Thus  $a+e+f\equiv 0\pmod2$. Since $\varepsilon_{s}$ is not a square in $k_4$, we have $e=0$ and so $a=f$. 
$$\xi^2=\varepsilon_2^a  \varepsilon_{qr}^b   \varepsilon_{qrs}^c \varepsilon_{2qrs}^d  \sqrt{\varepsilon_{2s}}^a.$$

\noindent\ding{224} Let $k_5=\mathbb{Q}(\sqrt{2s},\sqrt{qr})$. By applying the norm $N_{\FF'/k_5}=1+ \tau_1\tau_3$, we get 
\begin{eqnarray*}
	N_{\FF'/k_5}(\xi^2) &=&(-1)^{a}\cdot \varepsilon_{qr}^{2b} \cdot 1\cdot \varepsilon_{2qrs}^{2d}\cdot (-1)^{a}\cdot\varepsilon_{2s}^{a},\\
	&=& \varepsilon_{qr}^{2b}\varepsilon_{2qrs}^{2d}  \varepsilon_{2s}^{a}.
\end{eqnarray*}
As $\varepsilon_{2s}$ is not a square in $k_5$,   $a=0$. Therefore,
 $$\xi^2=  \varepsilon_{qr}^b   \varepsilon_{qrs}^c \varepsilon_{2qrs}^d.$$
Notice that by  \cite[Lemma 4]{AzchemsTrends}, we have
 $$\sqrt{\varepsilon_{qr}}=\alpha_1\sqrt{q}+\alpha_2\sqrt{r},$$ 
 and by Lemma \ref{lem0}, we have 
\begin{enumerate}[$\star$]
	\item If $2q(a-1)$ is a square in $\NN $, then $\sqrt{\varepsilon_{qrs}}= b_1\sqrt{q} +b_2\sqrt{rs}$, 
	\item If $r(a-1)$ is a square in $\NN $, then $\sqrt{ \varepsilon_{qrs}}= \frac{1}{\sqrt{2}}(b_1\sqrt{r} +b_2\sqrt{qs})$,
\end{enumerate}    
   and  $\sqrt{ \varepsilon_{2qrs}}= \frac{1}{\sqrt{2}}(y_1\sqrt{s} +y_2\sqrt{2qr})$, 
   where $b_1$, $b_2$, $y_1$, $y_2$, $ \alpha_1$ and $\alpha_2$ are  integers.
Thus $\sqrt{\varepsilon_{qr}}$, $\sqrt{\varepsilon_{qrs}}\not\in \FF'$ and $\sqrt{\varepsilon_{qr}\varepsilon_{qrs}}$, $\sqrt{ \varepsilon_{2qrs}} \in \FF'$. It follows that 
 $$E_{\FF'}=\langle-1,   \varepsilon_2,  \varepsilon_{qr}     ,\sqrt{\varepsilon_{s}}, \sqrt{\varepsilon_{2s}}, \sqrt{\varepsilon_{2qr}},\sqrt{\varepsilon_{qr}\varepsilon_{qrs}}, \sqrt{ \varepsilon_{2qrs}} \rangle.$$

Thus, by class number formula and  \cite[Corollaries 18.4 and 19.7]{connor88}, we have $h_2(\FF')=\frac{1}{2}h_2(qrs)$.

\end{proof}

\begin{proof}[Proof of Theorem \ref{THMAinfy=2m}]
 As the rank of the $2$-class group of $F=\mathbb{Q}(\sqrt{2},\sqrt{qrs})$ equals $2$,  under our assumptions, we have $A(F)\simeq \ZZ/2^{\delta_1} \ZZ\times\ZZ/2^{\delta_2} \ZZ$, where $\delta_1+\delta_2=m$. Notice that the quadratic unramified extensions of $F$ are $\FF'=\mathbb{Q}(\sqrt{2},\sqrt{qr}, \sqrt{s})$, $\FF''=K_1=\mathbb{Q}(\sqrt{2},\sqrt{q}, \sqrt{rs})$ and $\FF'''=\mathbb{Q}(\sqrt{2},\sqrt{r}, \sqrt{qs})$. By Lemma \ref{realTruquad2-rankq}, the $2$-class groups of $\FF'$ and $\FF'''$ are cyclic and that of $\FF'$ is of rank $2$. As by Lemma \ref{lemmunitstriquad}, $h_2(\FF')=\frac{1}{2}h_2(qrs)=2^{m-1}$, by Proposition \ref{LemBenjShn}, we have $ F^{(1)}=F^{(2)}$. On the other hand, by means of the class number formula, \cite[Corollary 18.4]{connor88},  and Lemma \ref{lem0}, we check that 
$h_2(K)=\frac{1}{4}q(K)h_2(q)h_2(rs)h_2({qrs})=\frac{1}{4}\cdot2\cdot 1\cdot 1 \cdot2^m=2^{m-1}=h_2(F)$. So the result.	
 \end{proof}
\begin{corollary}
	Keep the same hypothesis of Theorem \ref{THMAinfy=2m}. According to  the above proof and Proposition \ref{LemBenjShn}, we have  
	 $$h_2(F'')=h_2(F''')=2^{m-1}.$$
\end{corollary}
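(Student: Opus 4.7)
The plan is to read off the conclusion directly from the Artin-style result in Proposition~\ref{LemBenjShn}, using the data already extracted inside the proof of Theorem~\ref{THMAinfy=2m}. First, I would recall from that proof that $F=\mathbb{Q}(\sqrt{2},\sqrt{qrs})$ has $\rg(A(F))=2$, so under the standing hypothesis one can write $A(F)\simeq \ZZ/2^{\delta_1}\ZZ\times \ZZ/2^{\delta_2}\ZZ$ with $\delta_1+\delta_2=m$, where $h_2(qrs)=2^m$. I would also recall that the three unramified quadratic extensions of $F$ sitting inside its Hilbert $2$-class field are exactly
\[
\FF'=\mathbb{Q}(\sqrt{2},\sqrt{qr},\sqrt{s}),\qquad \FF''=K_1=\mathbb{Q}(\sqrt{2},\sqrt{q},\sqrt{rs}),\qquad \FF'''=\mathbb{Q}(\sqrt{2},\sqrt{r},\sqrt{qs}),
\]
and that Lemma~\ref{lemmunitstriquad}, whose hypothesis is available precisely because $s(a-1)$ is not a square in $\NN$, gives $h_2(\FF')=\tfrac{1}{2}h_2(qrs)=2^{m-1}$.

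The second and final step is to apply Proposition~\ref{LemBenjShn} with the pair $(m,n)$ there taken to be $(\delta_1,\delta_2)$ here, so that the target value $2^{m+n-1}$ matches $2^{\delta_1+\delta_2-1}=2^{m-1}$. Since the unramified quadratic extension $\FF'$ of $F$ already achieves this $2$-class number, the proposition forces the other two unramified quadratic extensions of $F$ to have the same $2$-class number, yielding $h_2(\FF'')=h_2(\FF''')=2^{m-1}$. There is no real obstacle to carry out: everything needed (the structure of $A(F)$, the list of unramified quadratic extensions, and the value $h_2(\FF')=2^{m-1}$) has already been assembled during the proof of Theorem~\ref{THMAinfy=2m}, and the role of the present corollary is merely to record the symmetric conclusion forced by Proposition~\ref{LemBenjShn}.
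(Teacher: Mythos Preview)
Your proposal is correct and follows exactly the approach indicated by the paper: the corollary is stated without a separate proof, simply noting that it follows from the proof of Theorem~\ref{THMAinfy=2m} and Proposition~\ref{LemBenjShn}, which is precisely the argument you spell out.
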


 \bigskip
	
	The previous investigations are also useful  to get  the following result.
	
	\begin{theorem}\label{THMAinfy=22}
		Let $K=\mathbb{Q}(\sqrt{q},\sqrt{rs})$,   where     $r\equiv s\equiv     3\pmod 8$ and  $q\equiv    7\pmod 8$   are three prime numbers such that $\left(\frac{q}{s}\right)=\left(\frac{q}{r}\right)=1$. Let $a$   and $b$  be the integers such that
		$ \varepsilon_{qrs}=a+b\sqrt{qrs}$. 	Assume that   $q(a-1)$  is not a square in $\NN $.   Then, we have:
			$$A(K_\infty)\simeq A(K)\simeq\ZZ/2 \ZZ\times\ZZ/2^{m-2} \ZZ .$$
		where $m$ is the positive integer such that  $h_2(qrs)=2^m$.   	Therefore,  we have $\mu_K=\lambda_K=0$ and $\nu_K=m-1$.
	\end{theorem}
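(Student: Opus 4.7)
The plan is to parallel the strategy of Theorem \ref{THMAinfy=2m}, the new ingredient being the structural analysis of the Iwasawa module in the rank-two situation. First, by item $5)$ of the Main Theorem (Theorem \ref{maintheorem}), the congruence hypotheses place us in the case $\rg(A(K_\infty)) = \rg(A(K)) = 2$. Since the unique prime of $K$ above $2$ is totally ramified in $K_\infty/K$, Fukuda's theorem (Lemma \ref{lm fukuda}(2)) applies with $n_0 = 0$, so the $2$-rank remains equal to $2$ at every layer $K_n$.

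Next I compute $|A(K)| = h_2(K)$. By Kuroda's class number formula (Lemma \ref{wada's f.}) together with Lemma \ref{realQuad} (which gives $h_2(q) = h_2(rs) = 1$), one has
\[
h_2(K) \;=\; \tfrac{1}{4}\, q(K)\, h_2(q)\, h_2(rs)\, h_2(qrs) \;=\; 2^{m-2}\, q(K).
\]
The unit index $q(K)$ is determined via Wada's method (see page \pageref{algo wada}). Because $N(\varepsilon_{qrs}) = 1$, the equation $a^2 - qrs\, b^2 = 1$ together with Lemma \ref{lem2} forces $a-1 = d_1 b_1^2$ and $a+1 = d_2 b_2^2$ for exactly one decomposition $qrs = d_1 d_2$; a Legendre-symbol case analysis in the spirit of Lemma \ref{lem0} (using $q \equiv 7 \pmod 8$, $r \equiv s \equiv 3 \pmod 8$, $(q/r) = (q/s) = 1$) then produces the explicit shape of $\sqrt{\varepsilon_{qrs}}$ or $\sqrt{2\varepsilon_{qrs}}$. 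The hypothesis that $q(a-1)$ is not a square excludes the decomposition $(d_1, d_2) = (q, rs)$, hence the form $\sqrt{\varepsilon_{qrs}} = b_1\sqrt{q} + b_2\sqrt{rs}$ that would place $\sqrt{\varepsilon_{qrs}}$ inside $K$. Combined with the readily checked facts that neither $\sqrt{\varepsilon_q}$ nor $\sqrt{\varepsilon_{rs}}$ lies in $K$ while a single independent product of the shape $\sqrt{\varepsilon_{rs}\,\varepsilon_{qrs}}$ does, this gives $q(K) = 2$ and $|A(K)| = 2^{m-1}$.

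Rank $2$ and order $2^{m-1}$ leave the possibilities $A(K) \simeq \ZZ/2^a\ZZ \times \ZZ/2^{m-1-a}\ZZ$ with $1 \le a \le (m-1)/2$. To pin down $a = 1$, I identify one of the three unramified quadratic extensions $K_{i,2}/K$ (each a triquadratic extension of $\QQ$ described explicitly via the $2$-genus theory of $K$) and compute its $2$-class number by Kuroda's formula. The unit index of the relevant triquadratic field is handled by Wada's method in the spirit of Lemma \ref{lemmunitstriquad}, and the outcome is $h_2(K_{i,2}) = 2^{m-2}$. Proposition \ref{LemBenjShn} then simultaneously pins down the structure $A(K) \simeq \ZZ/2\ZZ \times \ZZ/2^{m-2}\ZZ$ and the abelianness of the $2$-class field tower $K^{(1)} = K^{(2)}$.

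Finally, I lift the description from $A(K)$ to $A(K_\infty)$. A Kuroda computation for the triquadratic field $K_1 = \QQ(\sqrt{2}, \sqrt{q}, \sqrt{rs})$ --- whose seven quadratic subfields have $2$-class numbers controlled by the odd values $h_2(2) = h_2(q) = h_2(rs) = 1$ together with $h_2(2q), h_2(2rs), h_2(qrs), h_2(2qrs)$ --- followed by a further unit-index computation by Wada, yields $h_2(K_1) = h_2(K) = 2^{m-1}$. By Lemma \ref{lm fukuda}(1), $h_2(K_n) = 2^{m-1}$ for every $n \ge 0$; combining this with the rank stability and the abelianness of the tower, each norm map $A(K_{n+1}) \to A(K_n)$ is an isomorphism, so $A(K_\infty) \simeq A(K) \simeq \ZZ/2\ZZ \times \ZZ/2^{m-2}\ZZ$, with Iwasawa invariants $\mu_K = \lambda_K = 0$ and $\nu_K = m-1$. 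The main technical obstacle is the unit-index computation: enumerating the eight possible shapes of $\sqrt{\varepsilon_{qrs}}$ and $\sqrt{2\varepsilon_{qrs}}$, eliminating seven of them by Legendre symbols, and verifying with Wada's method which independent square roots of unit products actually lie in each of $K$, $K_{i,2}$ and $K_1$ --- a delicate but routine exercise in the style of Lemmas \ref{lem0} and \ref{lemmunitstriquad}.
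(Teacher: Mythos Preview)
Your order and rank computations are sound, and the Fukuda argument at the end correctly transfers $A(K)\simeq A(K_\infty)$ once $h_2(K)=h_2(K_1)$ is established. The gap is in the structural step for $A(K)$. Knowing that some unramified quadratic extension $K_{i,2}$ of $K$ has $h_2(K_{i,2})=2^{m-2}$ and invoking Proposition~\ref{LemBenjShn} yields only that the $2$-class field tower of $K$ is abelian; it does \emph{not} single out $A(K)\simeq\ZZ/2\ZZ\times\ZZ/2^{m-2}\ZZ$ among the rank-$2$ abelian groups of order $2^{m-1}$. Indeed, once the tower is abelian every unramified quadratic extension has $2$-class number $|A(K)|/2=2^{m-2}$ regardless of whether $A(K)$ is of type $(2,2^{m-2})$ or $(4,2^{m-3})$, etc. What would force the $4$-rank to be $1$ is an unramified quadratic extension with \emph{cyclic} $2$-class group, and you never establish this. (A secondary point: your claim that all three $K_{i,2}$ are triquadratic over $\QQ$ via genus theory is asserted but not argued.)

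The paper supplies exactly the missing cyclicity, but by descending to the quadratic field $L=\QQ(\sqrt{qrs})$ rather than ascending from $K$. One checks $\rg(A(L))=\rg(A(L_1))=2$ and, via Remark~\ref{remhF}, $h_2(L)=h_2(L_1)=2^{m}$; Fukuda then gives $\rg(A(L_n))=2$ and $|A(L_n)|=2^{m}$ for every $n$. The three unramified quadratic extensions of $L_n$ are $K_n$, $K'_n$ and $K''_n$, where $K'=\QQ(\sqrt{s},\sqrt{qr})$ and $K''=\QQ(\sqrt{r},\sqrt{qs})$; items $2)$--$4)$ of the Main Theorem (with the roles of $q,r,s$ permuted) show that $A(K'_n)$ and $A(K''_n)$ are cyclic for all $n$. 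If the $4$-rank of $A(L_n)$ were $2$ then every index-$2$ subgroup would have rank $2$, contradicting this cyclicity; hence $A(L_n)\simeq\ZZ/2\ZZ\times\ZZ/2^{m-1}\ZZ$. Since $h_2(K_n)=2^{m-1}$, Proposition~\ref{LemBenjShn} makes the tower of $L_n$ abelian, and Theorem~\ref{AabounePrzekhini} then identifies $K_n$ with the extension $K_{3,2}$ and yields $A(K_n)\simeq\ZZ/2\ZZ\times\ZZ/2^{m-2}\ZZ$. The decisive idea you are missing is this use of the \emph{other} biquadratic subfields $K'$, $K''$---whose class groups are already controlled by the Main Theorem---to pin down the $4$-rank.
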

 	\begin{proof}
	Consider the real quadratic field    $L=\QQ( \sqrt{qrs})$.
	 	Since $\rg(A(L))=	\rg(A(L_1))=2$ (cf. \cite[Theorem 3.1]{Azmouh2-rank}), Fukuda's theorem implies that   for all $n\geq 0$, $ \rg(A(L_n))=2$. Put 
	 $K= \mathbb{Q}( \sqrt{q}, \sqrt{rs})$,	$K'=\mathbb{Q}( \sqrt{qr}, \sqrt{s})$  and $K''=\mathbb{Q}( \sqrt{r}, \sqrt{qs})$. Notice that for all  $n\geq 0$, 
	$K_n$, $K'_n$ and $K''_n$ (i.e. the $n$th layer of the cyclotomic $\ZZ_2$-extension of $K$, $K'$ and $K''$ respectively) are the  three  unramified quadratic extensions of $L_n$.

	Using Lemma \ref{lem0} and Page \ref{sqrtepsiq1q2}, we check that $q(K)=2$ (in fact,   $\sqrt{\varepsilon_{rs}\varepsilon_{qrs}}\in K$ or $\sqrt{\varepsilon_{q}\varepsilon_{qrs}}\in K)$. Thus, by the class number formula and \cite[Corollary 18.4]{connor88}, we have 
	$h_2(K)=\frac{1}{4}q(K)h_2(q)h_2(rs)h_2({qrs})=\frac{1}{4}\cdot2\cdot 1\cdot 1 \cdot h_2({qrs})=\frac{1}{2}h_2({qrs})$. 
	It follows by Lemma \ref{lemmunitstriquad}, $h_2(K)=h_2(K_1)$ (here it is taken into account the change of places of       $q$, $r$ and $s$).
	So, from Remark \ref{remhF},
	we deduce that $h_2(L_1)= h_2(qrs)=2^m=\frac12h_2(K_1)=\frac12h_2(K)$. Thus, by Fukuda's theorem, we have for all $n\geq 0$,  $h_2(L_n)=\frac12h_2(K_n)$.
 It follows that, for all $n\geq 0$,     the Hilbert $2$-class field tower of $L_n$ stops at the first layer  
			(cf. Proposition \ref{LemBenjShn}), i.e. $G_{L_n}$ is abelian.
		Let $n$ be   such that $n\geq0$.	Assume that $4$-$\mathrm{rank}(A(L_n))=2$. 
			Thus, there exist  (cf. \cite[Lemma 1]{Ben17}) $a$ and $b$   such that  $G_{L_n} =\langle a,b \rangle$ and such that the three subgroups of $G_{L_n}$ of index $2$ are 
				\begin{center}
				$H_{1}=\langle a, b^2, G_{L_n}'\rangle=\langle a, b^2 \rangle$, $H_{2}=\langle ab, b^2, G_{L_n}'\rangle=\langle ab, b^2 \rangle$ and
				$H_{3}=\langle a^2, b, G_{L_n}'\rangle=\langle a^2, b  \rangle.$
			\end{center}
		Thus,       $\rg(A(K'_n))=	\rg(A(K''_n))=2$, which is a contradiction. In fact,   according to the main theorem, we have $A(K'_n)	$ and $A(K''_n)$ are cyclic. 
			So    $4$-$\mathrm{rank}(A(L_n))=1$. Therefore, for all $n\geq0$,  $A(L_n)$ is isomorphic to $\ZZ/2 \ZZ\times\ZZ/2^{m-1} \ZZ$.
			Hence, for all $n\geq0$,  $A(K_n)$ is isomorphic to $\ZZ/2 \ZZ\times\ZZ/2^{m-2} \ZZ$   (cf. Theorem \ref{AabounePrzekhini}), which completes the proof.
	\end{proof}

 	We close the paper with the following corollary and remark that follow  from the above theorem and its proof.
 
 \begin{corollary}\label{fincorol} Keep the same hypothesis as in the above theorem and 
 	consider the real quadratic field    $L=\QQ( \sqrt{qrs})$. Then, we have:
 $$A(L_\infty)\simeq A(L)\simeq\ZZ/2 \ZZ\times\ZZ/2^{m-1} \ZZ .$$
 		where $m$ is the positive integer such that  $h_2(qrs)=2^m$.   	Therefore,  we have $\mu_L=\lambda_L=0$ and $\nu_L=m$.
 \end{corollary}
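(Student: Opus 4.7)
The plan is to observe that this corollary is essentially a byproduct of the proof of Theorem~\ref{THMAinfy=22}, combined with the relationship between the class numbers of $L_n$ and those of the three quadratic unramified extensions $K_n$, $K'_n$, $K''_n$ of $L_n$. First I would note that $L_\infty/L$ is totally ramified at the unique prime above $2$, so Fukuda's Lemma~\ref{lm fukuda} applies with $n_0=0$. By \cite[Theorem~3.1]{Azmouh2-rank} we have $\rg(A(L))=\rg(A(L_1))=2$, hence by Lemma~\ref{lm fukuda} the $2$-rank stabilizes: $\rg(A(L_n))=2$ for every $n\geq 0$.

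Next, in the proof of Theorem~\ref{THMAinfy=22} it was shown that $h_2(K_n)=h_2(K)=2^{m-1}$ for all $n\geq 0$ (via Fukuda's theorem and the identity $h_2(K)=\tfrac12 h_2(qrs)=2^{m-1}$ obtained from the class number formula, \cite[Corollary~18.4]{connor88}, and Lemma~\ref{lem0}). Since $L_n\subset K_n$ is a quadratic unramified extension and $h_2(L_n)=2\,h_2(K_n)^{-1}\cdot(\text{etc.})$ is controlled by Proposition~\ref{LemBenjShn} together with the equalities already established there ($h_2(L_1)=h_2(qrs)=2^m$ and $G_{L_n}$ abelian for all $n$), we get $h_2(L_n)=2^m$ for every $n\geq 0$. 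Combined with the fact that $\rg(A(L_n))=2$, this forces $|A(L_n)|=2^m$ with $2$-rank exactly $2$.

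To pin down the precise structure, it remains to rule out $A(L_n)\simeq(\mathbb{Z}/2^{m/2}\mathbb{Z})^2$ or larger $4$-rank. This is exactly the step already carried out in the proof of Theorem~\ref{THMAinfy=22}: if the $4$-rank of $A(L_n)$ were $2$, then by the index-$2$ subgroup analysis (\cite[Lemma~1]{Ben17}) all three unramified quadratic extensions $K_n$, $K'_n$, $K''_n$ of $L_n$ would have $2$-rank equal to $2$, contradicting the main theorem which guarantees that $A(K'_n)$ and $A(K''_n)$ are cyclic. Hence the $4$-rank of $A(L_n)$ is $1$, which combined with $\rg(A(L_n))=2$ and $|A(L_n)|=2^m$ yields $A(L_n)\simeq\mathbb{Z}/2\mathbb{Z}\times\mathbb{Z}/2^{m-1}\mathbb{Z}$ for every $n\geq 0$.

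Finally, since the norm maps $A(L_{n+1})\to A(L_n)$ are surjective (by class field theory, as $L_\infty/L$ is totally ramified at a prime) and the groups stabilize in structure and cardinality, the inverse limit gives $A(L_\infty)\simeq A(L)\simeq\mathbb{Z}/2\mathbb{Z}\times\mathbb{Z}/2^{m-1}\mathbb{Z}$. The Iwasawa invariants $\mu_L=\lambda_L=0$ and $\nu_L=m$ then follow from formula~\eqref{iwasawa} by comparing $h_2(L_n)=2^m$ with $2^{\lambda_L n+\mu_L 2^n+\nu_L}$. The main obstacle is not really a new obstacle but rather correctly invoking the $4$-rank argument in the $L_n$-setting; this works uniformly in $n$ because the main theorem gives the cyclicity of $A(K'_n)$ and $A(K''_n)$ for every $n$ (the conditions on $r,s,q$ are preserved along the tower).
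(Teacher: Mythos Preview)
Your proposal is correct and follows exactly the paper's approach: the paper gives no separate proof but states that the corollary follows from Theorem~\ref{THMAinfy=22} and its proof, and you have correctly extracted the relevant steps (stabilization of $\rg(A(L_n))=2$ via Fukuda, $h_2(L)=h_2(L_1)=2^m$ giving $h_2(L_n)=2^m$ for all $n$, and the $4$-rank argument via \cite[Lemma~1]{Ben17}). The one garbled sentence involving ``$h_2(L_n)=2\,h_2(K_n)^{-1}\cdot(\text{etc.})$'' should simply be replaced by the direct application of Fukuda's theorem to $L$ using $h_2(L)=h_2(L_1)=2^m$, which you already note parenthetically.
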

	
	The previous results give the following constructions.
	\begin{remark}
		There exists an infinite family of  real biquadratic number fields (resp.   real quadratic number fields)  $k$ whose   
		 $2$-Iwasawa module  is of rank equals $2$ and of $4$-rank  equals $1$, and $\mathrm{Gal}(\mathcal{L}(k_\infty)/k_\infty)$, the Galois group of the maximal unramified pro-$2$-extension $\mathcal{L}(k_\infty)$ of $k_\infty$, is abelian.
		\end{remark}

	\section*{Acknowledgment}

I sincerely appreciate the reviewer's insightful comments and questions, which helped correct some errors and add missing details, greatly enhancing the quality of this paper.

I would also like to thank Katharina M\"uller for   her useful comments on the preliminary version of this paper.

\end{document}